\begin{document}
\setcounter{tocdepth}{1}

\newtheorem{theorem}{Theorem}    
\newtheorem{proposition}[theorem]{Proposition}
\newtheorem{conjecture}{Conjecture}
\newtheorem{corollary}[theorem]{Corollary}
\newtheorem{lemma}[theorem]{Lemma}
\newtheorem{sublemma}[theorem]{Sublemma}
\newtheorem{fact}[theorem]{Fact}
\newtheorem{observation}[theorem]{Observation}
\newtheorem{definition}{Definition}
\newtheorem{notation}[definition]{Notation}
\newtheorem{remark}[definition]{Remark}
\newtheorem{question}[conjecture]{Question}
\newtheorem{questions}[conjecture]{Questions}

\newtheorem{example}[definition]{Example}
\newtheorem{problem}[definition]{Problem}
\newtheorem{exercise}[definition]{Exercise}

 \numberwithin{theorem}{section}
 \numberwithin{definition}{section}
 \numberwithin{equation}{section}

\newcommand{\cA}{\mathcal{A}}
\newcommand{\cB}{\mathcal{B}}
\newcommand{\cC}{\mathcal{C}}
\newcommand{\cD}{\mathcal{D}}
\newcommand{\cE}{\mathcal{E}}
\newcommand{\cG}{\mathcal{G}}
\newcommand{\cH}{\mathcal{H}}
\newcommand{\cI}{\mathcal{I}}
\newcommand{\cJ}{\mathcal{J}}
\newcommand{\cK}{\mathcal{K}}
\newcommand{\cL}{\mathcal{L}}
\newcommand{\cM}{\mathcal{M}}
\newcommand{\cN}{\mathcal{N}}
\newcommand{\cO}{\mathcal{O}}
\newcommand{\cS}{\mathcal{S}}
\newcommand{\cT}{\mathcal{T}}
\newcommand{\cU}{\mathcal{U}}
\newcommand{\cV}{\mathcal{V}}
\newcommand{\cW}{\mathcal{W}}
\newcommand{\cX}{\mathcal{X}}
\newcommand{\cY}{\mathcal{Y}}
\newcommand{\cZ}{\mathcal{Z}}
\newcommand{\bA}{\mathbb{A}}
\newcommand{\bB}{\mathbb{B}}
\newcommand{\bC}{\mathbb{C}}
\newcommand{\bD}{\mathbb{D}}
\newcommand{\bF}{\mathbb{F}}
\newcommand{\bG}{\mathbb{G}}
\newcommand{\bH}{\mathbb{H}}
\newcommand{\bI}{\mathbb{I}}
\newcommand{\bJ}{\mathbb{J}}
\newcommand{\bK}{\mathbb{K}}
\newcommand{\bL}{\mathbb{L}}
\newcommand{\bM}{\mathbb{M}}
\newcommand{\bN}{\mathbb{N}}
\newcommand{\bO}{\mathbb{O}}
\newcommand{\bP}{\mathbb{P}}
\newcommand{\bQ}{\mathbb{Q}}
\newcommand{\bR}{\mathbb{R}}
\newcommand{\bS}{\mathbb{S}}
\newcommand{\bT}{\mathbb{T}}
\newcommand{\bU}{\mathbb{U}}
\newcommand{\bV}{\mathbb{V}}
\newcommand{\bW}{\mathbb{W}}
\newcommand{\bX}{\mathbb{X}}
\newcommand{\bY}{\mathbb{Y}}
\newcommand{\bZ}{\mathbb{Z}}

\newcommand{\R}{\mathbb R}
\newcommand{\Q}{\mathbb Q}
\newcommand{\Z}{\mathbb Z}
\newcommand{\C}{\mathbb C}
\newcommand{\N}{\mathbb N}
\newcommand{\T}{\mathbb T}
\newcommand{\F}{\mathcal F}
\newcommand{\A}{\tilde{A}_{4,d}}
\newcommand{\Aq}{\tilde{A}_{q,d}}
\newcommand{\B}{\mathbb B}
\renewcommand{\S}{\mathcal S}
\newcommand{\w}{\omega}
\newcommand{\e}{\varepsilon}
\newcommand{\g}{\gamma}
\newcommand{\p}{\varphi}
\newcommand{\s}{\psi}
\newcommand{\z}{\zeta}
\renewcommand{\l}{\ell}
\renewcommand{\a}{\alpha}
\renewcommand{\b}{\beta}
\renewcommand{\k}{\kappa}
\newcommand{\m}{\textfrak{m}}
\renewcommand{\P}{\textfrak{p}}

\newcommand{\op}[1]{\operatorname{{#1}}}
\newcommand{\im}{\operatorname{Im}}
\newcommand{\pd}[2]{\frac{\partial #1}{\partial #2}}
\newcommand{\rd}[2]{\frac{d #1}{d #2}}
\newcommand{\ip}[2]{\langle #1 , #2 \rangle}
\renewcommand{\j}[1]{\langle #1 \rangle}
\newcommand{\cl}{\operatorname{cl}}
\newcommand{\into}{\hookrightarrow}

\newcommand{\mc}{\mathcal}
\newcommand{\mb}{\mathbb}
\newcommand{\f}{\mathfrak}
\newcommand{\ssp}{\sqsubseteq}
\newcommand{\cc}{\overline}
\renewcommand{\Re}{\text{Re}}
\renewcommand{\Im}{\text{Im}}


\def\bE{{\mathbf E}}
\def\bedagger{{\mathbf e}^\dagger}
\def\bF{{\mathbf F}}
\def\reals{\mathbb{R}}
\def\be{{\mathbf e}}
\def\symdif{\,\Delta\,}
\def\bH{{\mathbf H}}
\def\bV{{\mathbf V}}
\def\bG{{\mathbf G}}
\def\Psharp{P^\sharp}
\def\bA{{\mathbf A}}
\def\bI{{\mathbf I}}
\def\bEstar{{\mathbf E}^\star}
\def\bAstar{{\mathbf A}^\star}
\def\be{{\mathbf e}}
\def\bv{{\mathbf v}}
\def\bw{{\mathbf w}}
\def\br{{\mathbf r}}
\def\Star{\star}
\def\doublestar{\dagger\ddagger}
\def\unitQ{{\mathbf Q}}
\def\Gl{\operatorname{Gl}}
\def\eps{\varepsilon}
\def\naturals{{\mathbb N}}
\def\rplus{{\mathbb R}^+}
\def\scriptt{{\mathcal T}}
\def\integers{{\mathbb Z}}
\def\one{{\mathbf 1}}

\def\repair{\medskip\hrule\hrule\medskip}
\def\bff{\mathbf f}
\def\bE{\mathbf E}
\def\bx{\mathbf x}
\def\bu{\mathbf u}
\def\boldR{\mathbf R}
\def\by{\mathbf y}
\def\bz{\mathbf z}
\def\rationals{\mathbb Q}
\newcommand{\norm}[1]{ \|  #1 \|}
\def\scriptl{{\mathcal L}}
\def\scripti{{\mathcal I}}
\def\scriptr{{\mathcal R}}

\def\bEdagger{{\mathbf E}^\dagger}
\def\set4{\mathcal I}
\def\tup14{(1,2,3,4)}
\def\bk{\mathbf k}
\def\sl{\operatorname{Sl}}
\def\gl{\operatorname{Gl}}
\def\eps{\varepsilon}

\title{A symmetrization inequality shorn of symmetry}

\author{Michael Christ}

\address{
        Michael Christ\\
        Department of Mathematics\\
        University of California \\
        Berkeley, CA 94720-3840, USA}
\email{mchrist@berkeley.edu}

\author{Dominique Maldague}
\address{
        Dominique Maldague\\
        Department of Mathematics\\
        University of California \\
        Berkeley, CA 94720-3840, USA}
\email{dmal@math.berkeley.edu}
\thanks{The first author was supported in part by NSF grant DMS-1363324. 
The second author was supported by an NSF graduate research fellowship.}

\date{October 11, 2018}

\begin{abstract}
An inequality of Brascamp-Lieb-Luttinger and of Rogers
states that among subsets of Euclidean space $\reals^d$ of specified Lebesgue measures,
balls centered at the origin are maximizers of certain functionals
defined by multidimensional integrals. 
For $d>1$, this inequality only applies to functionals
invariant under a diagonal action of $\sl(d)$.
We investigate functionals of this type, and their maximizers,
in perhaps the simplest situation in which 
$\sl(d)$ invariance does not hold. 
Assuming a more limited symmetry involving dilations but not rotations,
we show under natural hypotheses that maximizers exist, 
and moreover, that there exist distinguished maximizers 
whose structure reflects this limited symmetry.
For small perturbations of the $\sl(d)$--invariant framework
we show that these distinguished maximizers 
are strongly convex sets with infinitely differentiable boundaries.
It is shown that maximizers fail to exist for certain arbitrarily
small perturbations of $\sl(d)$--invariant structures.
\end{abstract}

\maketitle

\section{Introduction}

Let $J$ be a finite index set, and for each $j\in J$ let
$L_j:\reals^D\to\reals^{d_j}$ be a surjective linear mapping.
Writing $\bff = (f_j: j\in J)$, consider the functional
$\bff\mapsto \Lambda(\bff)$ defined by
\begin{equation} \label{Lambdadefn1} 
\Lambda(\bff) = \int_{\reals^D} \prod_{j\in J} f_j(L_j(\bx))\,d\bx.
\end{equation}
The functions $f_j:\reals^{d_j}\to[0,\infty]$ are assumed
to be nonnegative and  Lebesgue measurable. 
The theory of H\"older-Brascamp-Lieb inequalities
\cite{liebgaussian},
\cite{BCCT1}, \cite{BCCT2},
\cite{carlenliebloss},
\cite{brascamplieb},
\cite{barthe},
\cite{wigdersonetal},
\cite{BBFL},
\cite{BBCF}
is concerned with inequalities 
$\Lambda(\bff)\le A\prod_{j\in J}\norm{f_j}_{L^{p_j}(\reals^{d_j})}$.
It includes a necessary and sufficient condition on
the data $D,J,d_j,L_j,p_j$ for there to exist
$A<\infty$ for which such an inequality holds for all $\bff$,
it provides an expression of sorts for the optimal constant $A$, 
it includes algorithms for computing certain elements
of the theory,
it has discrete variants which are closely connected with
Hilbert's tenth problem (over $\rationals$),
it includes a characterization of maximizing tuples $\bff$
under certain auxiliary hypotheses,
and the optimal constant 
$\underset{\bff}\sup \,\Lambda(\bff)/\underset{j\in J}\prod\norm{f_j}_{L^{p_j}}$
has been shown to be a H\"older
continuous function of $\scriptl=(L_j: j\in J)$ within an appropriate 
domain and under appropriate hypotheses.

One of the foundational instances of this theory concerns the Riesz-Sobolev functional
\begin{equation}
(f_1,f_2,f_3)\mapsto \langle f_1*f_2,f_3\rangle = \int_{\reals^d\times\reals^d} f_1(x)f_2(y)f_3(x+y)\,dx\,dy
\end{equation}
defined by pairing the convolution $f_1*f_2$ with $f_3$.
The Riesz-Sobolev inequality 
extends the conclusion beyond the H\"older-Brascamp-Lieb theory through the symmetrization inequality
\begin{equation}
\langle f_1*f_2,f_3\rangle \le \langle f_1^\star*f_2^\star,f_3^\star\rangle,
\end{equation}
where $f^\star:\reals^d\to[0,\infty)$
is (up to redefinition on Lebesgue null sets) the 
unique function that is radially symmetric,
is a nonincreasing function of $|x|$,
and is equimeasurable with $f$.
The general inequality is a direct consequence of the special case
in which each function $f_j$ is the indicator function $\one_{E_j}$
of a set.

In this paper, we are concerned with functionals \eqref{Lambdadefn1},
acting only on tuples of indicator functions of sets.
We abuse notation systematically by writing $\Lambda(\bE)$
for $\Lambda(\bff)$ where $f_j=\one_{E_j}$ and $\bE=(E_j: j\in J)$,
assuming always that each $E_j\subset\reals^{d_j}$ is a Lebesgue measurable subset
of $\reals^{d_j}$ with finite Lebesgue measure.
More accurately, each $E_j$ is an equivalence class of sets,
with $E$ equivalent to $E'$ if and only if $|E\symdif E'|=0$.

To $E\subset\reals^d$ is associated its symmetrization $E^\star\subset\reals^d$, 
defined to be the closed ball
whose Lebesgue measure equals that of $E$ if $|E|>0$,
and to be the empty set if $|E|=0$.
Define $\bEstar = (E_j^\star: j\in J)$.
Rogers \cite{rogers1}, \cite{rogers2}
and Brascamp-Lieb-Luttinger \cite{BLL}
have extended\footnote{The treatment of Rogers \cite{rogers2} for $d>1$ may be incomplete.} 
the Riesz-Sobolev symmetrization inequality to 
\begin{equation} \label{ineq:symm} \Lambda(\bE)\le\Lambda(\bEstar), \end{equation}
under certain natural hypotheses. Firstly, it is assumed that $d_j=d$ 
is independent of the index $j\in J$.
Secondly, $D/d=m\in\naturals$. 
If $d=1$ then \eqref{ineq:symm} holds under these hypotheses. 
If $d>1$ then \eqref{ineq:symm} holds 
under an additional symmetry hypothesis, under which there exists
an identification of $\reals^D = \reals^{md}$ with $(\reals^d)^m$
so that the diagonal action of $\sl(d)$ on $(\reals^d)^m$
is a symmetry of $\Lambda$, in the sense that
\begin{equation}
\Lambda(\bff)=\Lambda(\bff\circ T) \ \text{ for every $T\in\sl(d)$},
\end{equation}
where $\bff\circ T = (f_j\circ T: j\in J)$.
There is also a natural translation action of the additive group $\reals^{md}$
by $\by \mapsto \big(\bff \mapsto (f_j + L_j(\by): j\in J)\big)$,
under which $\Lambda$ is invariant.

The inequality \eqref{ineq:symm} for indicator functions can be read in two ways:
as a statement of monotonicity of $\Lambda$ under the mapping
$\bE \mapsto\bE^\star = (E_j^\star: j\in J)$,
or alternatively as a formula for the functional
\begin{equation}
\Theta(\be) = \sup_{|E_j|=e_j} \Lambda(\bE)
\end{equation}
where the supremum is taken over all tuples of measurable sets of
the specified Lebesgue measures.
In particular, \eqref{ineq:symm} states that maximizers of $\Theta$
exist, and that among these maximizers are tuples of balls centered at the origin
of the specified measures.
Consequently, according to the symmetry hypothesis,
tuples of homothetic ellipsoids whose centers belong to the orbit
of $0\in(\reals^{d})^J$ under the group of translation symmetries are also maximizers. 
This orbit is the set of all $|J|$--tuples $(L_j(\bv): j\in J)$, where $\bv$ ranges
over $\reals^{D}$. 

Uniqueness theorems \cite{burchard}, \cite{christflock}, \cite{christBLLeq}, \cite{christoneill}
state that these are the only maximizers, under certain additional hypotheses,
of which the primary one is known as admissibility \cite{burchard}.
These uniqueness theorems for indicator functions do not have simple
extensions to general nonnegative functions, 
yet they can sometimes be used to analyze uniqueness and stability questions for functionals
of general nonnegative functions \cite{christradon}, \cite{drouot}, \cite{christyoungest}.

In this paper, we take up the question of whether any part of
this theory for indicator functions survives
in the absence of the Rogers-Brascamp-Lieb-Luttinger symmetry hypothesis.
In general, ellipsoids are not maximizers, as this example reveals:
Let $J=\{0,1,2\dots,D\}$.
Let $d_j=1$ for every $j\ne 0$ and $d_0=D-1$.
For $1\le j\le d$ define $L_j(x_1,x_2,\dots,x_d)=x_j$.
Let 
$L_0:\reals^D\to\reals^{D-1}$ be a generic surjective linear mapping.
Let $E_j\subset\reals^1$ be the interval of length $1$ centered at $0$
for each $j\in\{1,2,\dots,D\}$. Let $E_0$ remain unspecified as yet.
$\Lambda(\bE)$ is equal to $\int_{E_0}K$
where $K:\reals^{D-1}\to[0,\infty)$ 
and $K(y)$ is the one-dimensional measure of the slice $\{\bx: L_0(\bx)=y\}$
of the unit cube in $\reals^D$.
For $|E_0|$ in a suitable parameter range,
maximizing sets $E_0$ are superlevel sets $\{y: K(y)\ge r\}$ of $K$, 
with $r$ a function of $|E_0|$. These superlevel sets are convex polytopes.

We study the equidimensional case in which $d_j=d$ for every index $j$.
We consider the simplest equidimensional situation not subsumed by existing theory:
$d=2$, $D=2d=4$, and the index set $J$ is $\set4=\{1,2,3,4\}$.
We impose a partial symmetry hypothesis, discussed below.
Our first two main conclusions concerning this situation are that there is a suitable generalization
of the concept of admissibility, and that maximizing tuples $\bE$ exist. 
This raises the question of the nature of such maximizers.
In the subcase in which the tuple $\scriptl$ of mappings $L_j$ is a small perturbation
of a tuple for which the symmetry hypothesis holds, we also show
that for any partially symmetrized maximizer $\bE$, each component set $E_j$ is strongly convex
with $C^\infty$ boundary.
Finally, we analyze a family of perturbed structures for which the partial symmetry is overtly broken
in a specific way, and show that maximizers $\bE$ exist for these structures 
if and only if they are equivalent via 
certain changes of coordinates in $\reals^4$ to structures with the partial symmetry.
Generically, such changes of coordinates do not exist.
Thus the partial symmetry condition is not wholly artificial.

Our partial symmetry hypothesis is most transparently expressed
in coordinates. For $\reals^4$, we use coordinates $(\bx;\by)=(x_1,x_2;y_1,y_2)$.
We assume that each target space $\reals^2$
is equipped with coordinates with respect to which the linear mapping $L_j:\reals^4\to\reals^2$
takes the form
\begin{equation} \label{PSH0} L_j(\bx,\by)=\big(L_{j}^1(\bx),L_{j}^2(\by)\big)\end{equation}
with $L_{j}^i:\reals^2\to\reals^1$ a surjective linear mapping. 
The perturbed structures of our nonexistence examples take the form
$L_j(\bx,\by)=\big(L_{j}^1(\bx),L_{j}^2(\bx,\by)\big)$.

Structures of the form \eqref{PSH0} enjoy two types of symmetries.
Firstly, there is a translation action of $\reals^4$ on $(\reals^2)^4$ defined by
\[(x_j: j\in \set4)\mapsto (x_j+ L_j(\bw)): j\in\set4)\]
for $\bw\in\reals^4$.
Secondly, there are dilation actions of $\reals^+$ on $\reals^2$
and on $(\reals^2)^4$, defined by
\begin{equation} \label{dilations:Dt} 
D_t(x,y) =(tx,t^{-1}y)\end{equation}
and $D_t(z_j: j\in\set4) = (D_t z_j: j\in\set4)$.

In the fully symmetric case, Steiner symmetrization \cite{liebloss}, \cite{BLL}  
and rotational symmetry combine to provide a powerful tool. 
Our partial symmetry hypothesis allows Steiner symmetrization with respect to
the horizontal and vertical axes, but not with respect to arbitrary directions
in $\reals^2$. This limited symmetrization is a useful tool, but certainly a less powerful one. 

An essential element in the theory of maximizers
in the fully symmetric situation is the notion of admissibility. In the Riesz-Sobolev
inequality, if $|E_3|^{1/d}>|E_1|^{1/d}+|E_2|^{1/d}$
then maximizing configurations are those in which 
the sumset $E_1+E_2$ has measure $\le|E_3|$ and is contained in $E_3$.
Thus maximizers exist, but have little structure
and are not a natural topic of discussion.
Admissibility for this inequality is the condition that
$|E_k|^{1/d}\le |E_i|^{1/d}+|E_j|^{1/d}$
for all permutations $(i,j,k)$ of $(1,2,3)$.
We formulate a suitable
definition of admissibility for our context, and combine Steiner symmetrization
with the translation and  dilation symmetries to develop a compactness argument
which establishes the existence of maximizers in the admissible regime.

We study in more detail those maximizers $\bE$ that are Steiner symmetric with respect to both the
horizontal and vertical axes and show that (under a certain auxiliary
hypothesis of genericity) each component set $E_j$
is strictly convex with $C^\infty$ boundary. This is a type of regularity theorem
for a coupled system of free boundary problems. The $4$--tuple $\bE$ satisfies
a generalized Euler-Lagrange relation, which states (formally) that
the boundary of each $E_i$ is a level set of a certain function
$K_i$ defined in terms of the other three sets $E_j$. 
A bootstrapping argument is used to establish $C^\infty$ regularity
along with strong convexity.


\section{Notation, hypotheses, and preliminaries}

Throughout the paper we write $\scripti= \{1,2,3,4\}$.
All sets $E_i\subset\reals^d$ are assumed to be Lebesgue measurable
and to have finite Lebesgue measures, unless otherwise indicated.

Consider functionals of the form  
\begin{equation} \Lambda_{\mc{L}}({\bf{E}})= \int_{\R^4}\prod_{i=1}^4 1_{E_i}(L_i(x_1,x_2,y_1,y_2))dx_1dy_1dx_2dy_2 \end{equation}
where ${\bf{E}}=(E_i:i\in\set4)$ is a 4--tuple of Lebesgue measurable subsets of $\R^2$ and $\mc{L}=(L_i:i\in\set4)$ is a collection of linear maps from $\R^4\to\R^2$. 
The following structural hypothesis on the maps $L_i$ will be in force throughout this paper: 
For each $i\in\set4$, we require that $L_i$ can be expressed in the form 
\begin{equation} \label{PSH} L_i(x_1,x_2,y_1,y_2)=(L_i^1(x_1,x_2),L_i^2(y_1,y_2)) \end{equation}
where $L_i^1:\R^2\to\R$ and $L_i^2:\R^2\to\R$ are linear and surjective. 
We refer to \eqref{PSH} as the partial symmetry hypothesis.

\begin{definition} \label{defn:fullRBLL}
A tuple $\scriptl^0 = (L_j^0: j\in\set4)$ is said to satisfy
the Rogers-Brascamp-Lieb-Luttinger symmetry hypothesis if satisfies \eqref{PSH} and
\begin{equation} \label{RBLLsymmetry}
L_i^1=L_i^2\ \text{ for each $i\in\scripti$.}
\end{equation}
\end{definition}
We say more succinctly that $\scriptl^0$ satisfies the full symmetry hypothesis.

This implies the presence of a large symmetry group.
Define $T(\bE) = (T(E_j): j\in\set4)$. Then \eqref{PSH} and \eqref{RBLLsymmetry} imply that 
\begin{equation*}
\Lambda(T(\bE))=\Lambda(\bE)\ \text{ for every $\bE$ and $T\in\sl(2)$.} 
\end{equation*} 

The following notion of nondegeneracy is equivalent to Definition 2.3 of \cite{christBLLeq} 
when $\mc{L}$ satisfies the full symmetry hypothesis. 

\begin{definition}\label{newnondegeneracy} 
A family $\mc{L}=(L_i:i\in\set4)$ of linear mappings $L_j:\R^4\to\R^2$ 
that satifies \eqref{PSH}
is nondegenerate if for any $i\ne j\in\scripti$,
the mappings $\bx\mapsto (L_i^1(\bx),L_j^1(\bx))$
and $\by\mapsto (L_i^2(\by),L_j^2(\by))$
are bijective linear transformations from $\reals^2$ to $\reals^2$.
\end{definition}

\begin{notation}
The Lebesgue measure preserving dilations $D_t:\reals^2\to\reals^2$ are defined by
\[D_t(x,y) = (tx,t^{-1}y)\]
for $t\in\reals^+$.
We also write \[D_t\bE = D_t(E_j: j\in\set4) = (D_tE_j: j\in\set4).\]
\end{notation}

These dilations are symmetries of $\Lambda_\scriptl$ in the sense that
\begin{equation}
\Lambda_{\scriptl}(D_t \bE) = \Lambda_{\scriptl}(\bE)
\end{equation}
for all $4$--tuples $\bE$ of sets of Lebesgue measurable subsets of $\reals^2$.
$\Lambda_\scriptl$ also enjoys a translation symmetry. For any $\bv\in\reals^4$,
$\Lambda(E_j: j\in\set4) = \Lambda(E_j+L_j(\bv): j\in\set4)$.
This follows by making a change of variables $(\bx,\by)\mapsto (\bx,\by)-\bv$
in the integral defining $\Lambda(\bE)$.

\begin{notation} 
$|E|$ denotes the Lebesgue measure of a subset of Euclidean space $\reals^d$ of any dimension $d$.
$|{\bf{E}}|$ denotes $(|E_1|,|E_2|,|E_3|,|E_4|)\in[0,\infty]^4$
where each $E_i$ is a Lebesgue measurable subset of $\reals^2$.
\end{notation}

\begin{notation}For ${\bf{e}}=(e_i:i\in\set4)\in(0,\infty)^4$, 
\[ \Theta(\be) := \sup_{\bE: |\bE|=\be} \Lambda_{\mc{L}}(\bE).\]
\end{notation}

\begin{lemma}
$\Theta$ satisfies a triangle inequality
\begin{equation}
\label{triangle}
\Theta(\be+\be') \ge \Theta(\be)+\Theta(\be').
\end{equation}
\end{lemma}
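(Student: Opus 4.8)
The plan is to prove the superadditivity of $\Theta$ by a disjointification-plus-translation argument, exploiting the translation symmetry of $\Lambda_{\scriptl}$ recorded just above. Fix $\be,\be'\in(0,\infty)^4$ and let $\eta>0$ be arbitrary. Choose near-maximizing tuples $\bE=(E_j)$ with $|\bE|=\be$ and $\bE'=(E_j')$ with $|\bE'|=\be'$ so that $\Lambda_{\scriptl}(\bE)>\Theta(\be)-\eta$ and $\Lambda_{\scriptl}(\bE')>\Theta(\be')-\eta$; replacing each $E_j$ and $E_j'$ by bounded truncations, we may assume all eight sets are bounded, at the cost of worsening $\eta$ slightly. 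The idea is then to translate $\bE'$ far away — in a manner compatible with the functional — so that the translated copy is disjoint from $\bE$ in every coordinate, and then form the tuple of unions.

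The key point is which translations are available. For $\bv\in\reals^4$, the tuple $\bE+\scriptl(\bv):=(E_j+L_j(\bv):j\in\set4)$ satisfies $\Lambda_{\scriptl}(\bE+\scriptl(\bv))=\Lambda_{\scriptl}(\bE)$ and of course has the same measures as $\bE$. So first I would replace $\bE'$ by $\bE'+\scriptl(\bv)$ for a suitable $\bv$. The vectors $(L_1(\bv),\dots,L_4(\bv))$ as $\bv$ ranges over $\reals^4$ form a $4$-dimensional subspace $V$ of $(\reals^2)^4$; since each $L_j$ is surjective onto $\reals^2$ and, e.g. by nondegeneracy, the map $\bv\mapsto(L_1(\bv),L_2(\bv))$ is already a bijection $\reals^4\to(\reals^2)^2$, we may in particular pick $\bv$ so that $L_1(\bv)$ is any prescribed vector in $\reals^2$; this lets us push $E_1'$ off $E_1$. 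But we need to separate all four coordinates simultaneously. Since $\bv\mapsto(L_1(\bv),L_2(\bv))$ is a bijection, choosing $L_1(\bv)$ and $L_2(\bv)$ both large in a common direction forces $L_3(\bv)$ and $L_4(\bv)$ to be determined; they will also be large (their size is comparable to $|\bv|$), but we must check we can steer them into the complements of $E_3,E_4$. Because all sets are bounded, it suffices to send $|L_j(\bv)|\to\infty$ for every $j$ along a ray; this fails only if some $L_j$ vanishes on the line $\reals\bv$, i.e. if $\bv\in\ker L_j$. Since each $L_j:\reals^4\to\reals^2$ has a two-dimensional kernel, the union of the four kernels is a proper subset of $\reals^4$ (a finite union of $2$-planes cannot cover $\reals^4$), so a generic $\bv$ avoids all of them; then $tL_j(\bv)$ has norm $\to\infty$ for every $j$ as $t\to\infty$, and for $t$ large enough $(E_j'+tL_j(\bv))\cap E_j=\emptyset$ for all $j$ simultaneously.

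With $\bE'$ so translated, set $\bF=(E_j\cup(E_j'+tL_j(\bv)):j\in\set4)$. By disjointness $|F_j|=e_j+e_j'$, so $|\bF|=\be+\be'$ and hence $\Lambda_{\scriptl}(\bF)\le\Theta(\be+\be')$. On the other hand, expanding $\prod_j 1_{F_j}=\prod_j(1_{E_j}+1_{E_j'+tL_j(\bv)})$ and integrating, one term gives $\Lambda_{\scriptl}(\bE)$, another gives $\Lambda_{\scriptl}(\bE'+\scriptl(t\bv))=\Lambda_{\scriptl}(\bE')$, and every remaining ``mixed'' term is a nonnegative integral; therefore $\Lambda_{\scriptl}(\bF)\ge\Lambda_{\scriptl}(\bE)+\Lambda_{\scriptl}(\bE')>\Theta(\be)+\Theta(\be')-2\eta$ (plus the small truncation loss). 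Combining, $\Theta(\be+\be')>\Theta(\be)+\Theta(\be')-C\eta$ for every $\eta>0$, and letting $\eta\to0$ yields \eqref{triangle}.

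The only genuinely delicate step is the separation argument of the second paragraph: one must be sure a single translation parameter $\bv$ (scaled by $t\to\infty$) simultaneously drives all four components apart. This is where the nondegeneracy hypothesis and the dimension count ``four $2$-planes cannot cover $\reals^4$'' enter; everything else is bookkeeping with bounded sets and nonnegativity of the integrand. (If one prefers to avoid even this mild point, an alternative is to first apply a dilation $D_t$ to $\bE'$ — which also preserves both $\Lambda_{\scriptl}$ and the measures — but dilations alone do not separate sets, so some translation is unavoidable; the argument above is the cleanest route.)
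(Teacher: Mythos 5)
Your argument is correct and is essentially the paper's own proof: both choose a translation vector $\bv$ outside all four kernels (so that $rL_j(\bv)$ separates every component simultaneously for large $r$), form the disjoint unions, and use nonnegativity of the mixed terms in the multilinear expansion together with translation invariance. Your explicit justification that a finite union of $2$-planes cannot cover $\reals^4$, and the near-maximizer/truncation bookkeeping, merely make explicit what the paper leaves implicit.
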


\begin{proof}
Consider any $\bE,\bE'$ satisfying $|\bE|=\be$
and $|\bE'|=\be'$ such that all of the component sets $E_j,E'_j$
are bounded. 
Choose a vector $\bv\in\reals^4$ that does not belong to
the nullspace of any of the four mappings $L_j$.
For large $r\in\reals^+$ consider the $4$--tuple $\bE^{(r)}$
of sets defined by $E^{(r)}_j = E_j \cup (E'_j + rL_j(\bv))$.
For sufficiently large $r$, $E'_j + rL_j(\bv)$ is disjoint from $E_j$,
so $|E^{(r)}_j| = e_j + e'_j$. 
Since $\one_{\tilde E_j} = \one_{E_j} + \one_{E'_j + rL_j(\bv)}$,
$\Lambda(\tilde\bE) \ge \Lambda(\bE) + \Lambda(E'_j+r L_j(\bv): j\in\set4)$.
Indeed, 
$\Lambda(\tilde\bE)$ is the sum of the two terms
on the right-hand side of this last inequality, plus $2^4-2$ other terms,
each of which is nonnegative.
By the translation invariance of $\Lambda$, 
this is equal to
\[\Lambda(\bE) + \Lambda(E'_j+r L_j(\bv): j\in\set4) = \Lambda(\bE) + \Lambda(\bE').\] 
Upon taking the supremum over all
tuples $\bE,\bE'$ with bounded component sets, the triangle inequality follows.
\end{proof}

The vertical Steiner symmetrization $\bE^\sharp = (E_j^\sharp: j\in\set4)$ is defined as follows. For $E\subset\reals^2$ with finite Lebesgue measure, $E^\sharp\subset\reals^2$
is $\{(x,y): |y|\le \tfrac12 |\{t\in\reals: (x,t)\in E|\}$
if $|\{t\in\reals: (x,t)\in E|\}>0$, and otherwise $\{y: (x,y)\in E^\sharp\}$ is empty.
Then $|E^\sharp|=|E|$, and the intersection of $E^\sharp$ with any vertical line
has the same one-dimensional Lebesgue measure as the intersection of $E$ with that same
vertical line. 
Define the horizontal Steiner symmetrizations $E^\flat$ and $\bE^\flat$ 
by interchanging the roles of the horizontal and vertical axes.
Define $E^\dagger = (E^\sharp)^\flat$ and $\bE^\dagger = (\bE^\sharp)^\flat$.
It is elementary that 
\begin{equation} E^\dagger=(E^\dagger)^\sharp = (E^\dagger)^\flat \end{equation}
up to Lebesgue null sets.

\begin{lemma}
$\Lambda$ satisfies
\begin{equation} \label{symmetrize}
\Lambda(\bE) \le\Lambda(\bEdagger) 
\end{equation}
for all tuples of sets of finite Lebesgue measure.
\end{lemma}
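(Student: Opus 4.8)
The plan is to split the doubly symmetrized tuple $\bEdagger = (\bE^\sharp)^\flat$ into its two constituent Steiner symmetrizations and to prove separately that neither step decreases $\Lambda$: namely that $\Lambda(\bE)\le\Lambda(\bE^\sharp)$ for every tuple $\bE$, and likewise $\Lambda(\bF)\le\Lambda(\bF^\flat)$ for every tuple $\bF$. Granting these, one applies the first inequality to $\bE$ and the second to $\bF=\bE^\sharp$ to obtain $\Lambda(\bE)\le\Lambda(\bE^\sharp)\le\Lambda((\bE^\sharp)^\flat)=\Lambda(\bEdagger)$. The two monotonicity statements have identical proofs with the roles of the horizontal and vertical coordinates exchanged, so I would only write out $\Lambda(\bE)\le\Lambda(\bE^\sharp)$ in detail; note that \eqref{PSH} is a hypothesis on $\scriptl$ alone, hence unaffected by symmetrizing the sets, so the second step applies verbatim to $\bF=\bE^\sharp$.

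The one substantive idea is that the partial symmetry hypothesis \eqref{PSH} decouples the pair $\bx=(x_1,x_2)$ from the pair $\by=(y_1,y_2)$, so that after freezing $\bx$ the remaining $\by$-integral is an instance of the one-dimensional Rogers--Brascamp--Lieb--Luttinger functional, to which the already-known $d=1$ case of the symmetrization inequality \eqref{ineq:symm} applies (no symmetry hypothesis being needed when $d=1$). Concretely, writing $L_i(\bx,\by)=(L_i^1(\bx),L_i^2(\by))$ and using Fubini,
\begin{equation}
\Lambda(\bE)=\int_{\R^2}\Big(\int_{\R^2}\prod_{i=1}^4 \one_{E_i}\big(L_i^1(\bx),L_i^2(\by)\big)\,dy_1\,dy_2\Big)\,dx_1\,dx_2 .
\end{equation}
For almost every $\bx$ the vertical slice $G_i(\bx)=\{t\in\R:(L_i^1(\bx),t)\in E_i\}$ is measurable with finite measure, and $\one_{E_i}(L_i^1(\bx),L_i^2(\by))=\one_{G_i(\bx)}(L_i^2(\by))$, so the inner integral equals $\int_{\R^2}\prod_{i=1}^4 \one_{G_i(\bx)}(L_i^2(\by))\,d\by$. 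This is a functional of the form \eqref{Lambdadefn1} with ambient dimension $2$, all target dimensions equal to $1$, and $m=2$, and each $L_i^2$ is surjective; hence the $d=1$ case of \eqref{ineq:symm} gives
\begin{equation}
\int_{\R^2}\prod_{i=1}^4 \one_{G_i(\bx)}(L_i^2(\by))\,d\by \ \le\ \int_{\R^2}\prod_{i=1}^4 \one_{G_i(\bx)^\star}(L_i^2(\by))\,d\by ,
\end{equation}
where $G_i(\bx)^\star\subset\R$ is the interval of length $|G_i(\bx)|$ centered at the origin. By the definition of $E_i^\sharp$, the vertical slice of $E_i^\sharp$ over the point $L_i^1(\bx)$ is exactly $G_i(\bx)^\star$, so the right-hand side is precisely the inner integrand for $\Lambda(\bE^\sharp)$; integrating in $\bx$ over $\R^2$ and applying Fubini once more yields $\Lambda(\bE)\le\Lambda(\bE^\sharp)$. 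Running the same argument with $\bx\leftrightarrow\by$ and $L_i^1\leftrightarrow L_i^2$ gives $\Lambda(\bF)\le\Lambda(\bF^\flat)$ for every $\bF$, and combining the two inequalities completes the proof.

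The argument is essentially routine once the decoupling is observed; the only points needing care are the measurability of the slices $G_i(\bx)$ for almost every $\bx$ and the two applications of Fubini's theorem, both immediate since every $E_i$ has finite measure (one may first reduce to bounded $E_i$, or simply observe that when either side of the claimed inequality is $+\infty$ there is nothing to prove). Thus I do not foresee a genuine obstacle: the content lies entirely in applying the known one-dimensional rearrangement inequality slicewise, which is legitimate precisely because the structural hypothesis \eqref{PSH} separates the $\bx$- and $\by$-variables.
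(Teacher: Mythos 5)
Your proof is correct and follows essentially the same route as the paper: the paper likewise factors $\bEdagger=(\bE^\sharp)^\flat$ and chains $\Lambda(\bE)\le\Lambda(\bE^\sharp)\le\Lambda((\bE^\sharp)^\flat)$, obtaining each single-step inequality from the Steiner symmetrization argument of Brascamp--Lieb--Luttinger, with the observation that only horizontal and vertical symmetrizations are used so that the partial symmetry hypothesis \eqref{PSH} suffices. The only difference is that you write out explicitly the slicing-plus-one-dimensional-rearrangement argument that the paper delegates to the citation of \cite{BLL}, which is exactly the mechanism behind that citation.
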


\begin{proof}

Under our partial symmetry hypothesis,
\begin{equation} \label{Steinersymminequality}
\Lambda(\bE)\le \Lambda(\bE^\sharp)
\ \text{ and } \ 
\Lambda(\bE)\le \Lambda(\bE^\flat)
\end{equation}
for arbitrary $\bE$. These inequalities are proved 
in \cite{BLL},
under the full Rogers-Brascamp-Lieb-Luttinger symmetry hypothesis of 
Definition~\ref{defn:fullRBLL}, 
but only the partial symmetry hypothesis is needed in their proofs
since only Steiner symmetrizations in horizontal and vertical directions are employed.
\eqref{symmetrize} follows from \eqref{Steinersymminequality} since 
\[ \Lambda(\bE)\le \Lambda(\bE^\sharp) \le\Lambda((\bE^\sharp)^\flat) = \Lambda(\bEdagger).\]
\end{proof}

\section{Admissibility}
We regard $(0,\infty)^4$ as being partially ordered. 

\begin{notation} $\be\le \be'$ means $e_j\le e'_j$ for all four indices $j\in\scripti$.
$\be < \be'$ means $\be\le \be'$ and $e_j<e'_j$ for at least one index $j\in\scripti$.
\end{notation}

\begin{definition}\label{admissible} 
$(\scriptl,\be)$ is admissible 
if there exists no $\be'<\be$ satisfying $\Theta(\be')=\Theta(\be)$.
\end{definition}
We will sometimes write ``$\be$ is admissible'' instead.

$\bE$ is said to be a maximizer if $\Lambda(\bE)=\Theta(|\bE|)$. 
$\Theta(\be)$ is said to be attained if there exists a maximizer with $|\bE|=\be$.

\begin{lemma}
$\Theta$ is locally Lipschitz continuous.
More precisely, there exists $C<\infty$ depending only on $\scriptl$
such that for any $\be,\be'\in(0,\infty)^4$,
\begin{equation} \big|\,\Theta(\be') - \Theta(\be)\,\big|
 \le C\max_{k\in\scripti} (e_k+e'_k)\max_{j\in\scripti} |e_j-e'_j|.\end{equation}
\end{lemma}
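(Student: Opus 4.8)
The plan is to derive the bound from two elementary facts together with a telescoping comparison, using neither the existence of maximizers nor Steiner symmetrization nor the superadditivity of $\Theta$. The first fact is monotonicity: if $\be\le\be'$ then $\Theta(\be)\le\Theta(\be')$, since a tuple $\bE$ with $|\bE|=\be$ can be enlarged, one component at a time, to a tuple of sets with measures $\be'$, and $\Lambda_{\scriptl}$ is nondecreasing in each component set. The second fact is a bilinear bound furnished by nondegeneracy. For $i\neq j$, the partial symmetry hypothesis \eqref{PSH} exhibits the linear map $\bz=(\bx,\by)\mapsto(L_i(\bz),L_j(\bz))$ on $\R^4$ as the direct sum of $\bx\mapsto(L_i^1(\bx),L_j^1(\bx))$ and $\by\mapsto(L_i^2(\by),L_j^2(\by))$, each of which is a linear isomorphism of $\R^2$ by Definition~\ref{newnondegeneracy}. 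Hence that map is a linear isomorphism of $\R^4$, with Jacobian determinant a nonzero constant $c_{ij}$ depending only on $\scriptl$, and the change of variables formula yields
\[\int_{\R^4}1_F(L_i(\bz))\,1_G(L_j(\bz))\,d\bz \;=\; c_{ij}^{-1}\,|F|\,|G| \qquad (F,G\subset\R^2 \text{ measurable of finite measure}).\]
Setting $C_0=\max_{i\neq j}c_{ij}^{-1}$, we get $\Lambda_{\scriptl}(\bE)\le C_0|E_i||E_j|$ for every $i\neq j$ by discarding the two remaining factors in the integrand, and in particular $\Theta(\be)\le C_0\min_{i\neq j}e_ie_j<\infty$.

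The core of the proof is a one-sided comparison. Fix $\be,\be'\in(0,\infty)^4$, let $\be''=(\max(e_j,e_j'):j\in\scripti)$ be their componentwise maximum, so that $\be\le\be''$ and $\be'\le\be''$, and fix $\varepsilon>0$. Choose $\bE''$ with $|\bE''|=\be''$ and $\Lambda_{\scriptl}(\bE'')>\Theta(\be'')-\varepsilon$, and for each $j$ choose a measurable $E_j\subset E_j''$ with $|E_j|=e_j$. The pointwise telescoping identity
\[\prod_{i\in\scripti}1_{E_i''}\;-\;\prod_{i\in\scripti}1_{E_i}\;=\;\sum_{j\in\scripti}1_{E_j''\setminus E_j}\prod_{i<j}1_{E_i}\prod_{i>j}1_{E_i''}\]
holds, and because $\scripti$ has four elements, for each $j$ there is some $k\neq j$ with the $j$-th summand pointwise bounded above by $1_{E_j''\setminus E_j}\cdot 1_{E_k''}$ (the factor indexed by $k$ is $1_{E_k''}$, or $1_{E_k}\le 1_{E_k''}$, according as $k>j$ or $k<j$, and all other factors are at most $1$). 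Composing with $L_j$ and $L_k$, integrating, and applying the bilinear bound to each of the four terms gives
\[\Lambda_{\scriptl}(\bE'')-\Lambda_{\scriptl}(\bE)\;\le\;\sum_{j\in\scripti}C_0\,(e_j''-e_j)\max_{i\in\scripti}e_i''\;\le\;4C_0\Big(\max_{k\in\scripti}|e_k-e_k'|\Big)\Big(\max_{i\in\scripti}(e_i+e_i')\Big),\]
where we used $e_j''-e_j=\max(0,e_j'-e_j)\le|e_j-e_j'|$ and $e_i''=\max(e_i,e_i')\le e_i+e_i'$.

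Write $M=\big(\max_{k}|e_k-e_k'|\big)\big(\max_{i}(e_i+e_i')\big)$, so the last display says $\Lambda_{\scriptl}(\bE'')-\Lambda_{\scriptl}(\bE)\le 4C_0M$. Then
\[\Theta(\be)\;\ge\;\Lambda_{\scriptl}(\bE)\;\ge\;\Lambda_{\scriptl}(\bE'')-4C_0M\;>\;\Theta(\be'')-\varepsilon-4C_0M\;\ge\;\Theta(\be')-\varepsilon-4C_0M,\]
the last inequality being monotonicity applied to $\be'\le\be''$. Letting $\varepsilon\to0$ gives $\Theta(\be')-\Theta(\be)\le 4C_0M$; since $M$ is symmetric in $\be$ and $\be'$ and the construction above treats them symmetrically, the reverse inequality holds as well, so the lemma follows with $C=4C_0$. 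The only substantive ingredient is the bilinear estimate, which is where nondegeneracy of $\scriptl$ enters and which fixes how the constant depends on $\scriptl$; the remaining steps are routine, and the one point requiring a little care is checking that, for each $j$, at least one surviving factor is $1_{E_k''}$ rather than $1_{E_k}$, so that the bilinear bound applies directly.
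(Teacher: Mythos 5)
Your proof is correct and follows essentially the same strategy as the paper's: perturb a near-maximizing tuple to adjust the measures and control the resulting change in $\Lambda$ by multilinearity together with the bilinear bound $\Lambda(\bE)\le C_0|E_i|\,|E_j|$ coming from nondegeneracy. The only differences are organizational — you route the comparison through the componentwise maximum $\be''$ and monotonicity of $\Theta$, which reduces everything to nested sets, and in doing so you actually supply the telescoping proof of the local Lipschitz estimate for $\Lambda$ that the paper merely asserts as its inequality \eqref{basicbound}.
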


\begin{proof} 
The mapping $\bE\mapsto\Lambda_{\mc{L}}(\bE)$ 
is locally Lipschitz in the sense that
\begin{equation} \label{basicbound}
|\Lambda(\bE)-\Lambda(\bE')| \le C(\max_{i\in\scripti} |E_i|+ \max_{j\in\scripti}
|E'_j| \max_{k\in\scripti} |E_k\symdif E'_k|\end{equation}
for arbitrary $4$-tuples of Lebesgue measurable subsets of $\R^2$. 
This constant $C$ depends only on $\scriptl$.

Given $\be$ and $\delta>0$,
choose $\bE=(E_j: j\in\scripti)$ satisfying $|\bE|=\be$ and $\Lambda(\bE)\ge \Theta(\be)-\delta$.
From the sets $E_j$, construct sets $E'_j\subset\reals^2$ satisfying $|E'_j|=e'_j$
with $|E'_j\symdif E_j| = |e'_j-e_j|$. 
It follows from \eqref{basicbound} that
\[\Lambda(\bE')\ge \Lambda(\bE)- C\max_{k\in\scripti} e_k \cdot \max_{j\in\scripti} |e_j-e'_j|,\]
where $C<\infty$ depends only on $\scriptl$.
By letting $\delta\to 0$ we conclude that
\[ \Theta(\be') \ge \Theta(\be) - C\max_{k\in\scripti} (e_k+e'_k)\max_{j\in\scripti} |e_j-e'_j|.\]
\end{proof} 

The mapping $\scriptl\mapsto\Lambda_\scriptl(\bE)$ is not 
continuous in $\scriptl$ {\em uniformly in $\bE$}.

\begin{proposition} 
For any $t>0$ there exists an admissible $\be\in(0,\infty)^4$ satisfying $\Theta(\be)=t$.
For any $\be\in(0,\infty)^4$ 
there exists an admissible $\be'\le\be$ satisfying 
$\Theta(\be') = \Theta(\be)$.
\end{proposition}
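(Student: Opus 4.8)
The plan is to deduce both assertions from a few soft properties of $\Theta$ together with one genuine estimate. The soft properties I would record first are: $\Theta$ is continuous (this is the Lipschitz Lemma above); $\Theta$ is monotone nondecreasing for the partial order, since enlarging the sets $E_j$ only increases the nonnegative integrand defining $\Lambda$; $\Theta$ is homogeneous of degree two, $\Theta(t\be)=t^{2}\Theta(\be)$ for $t>0$, by the change of variables $(\bx,\by)\mapsto t^{1/2}(\bx,\by)$ in \eqref{Lambdadefn1} together with the fact that dilating a planar set by $t^{1/2}$ multiplies its area by $t$; and $\Theta(\be)>0$ for every $\be\in(0,\infty)^{4}$, which one sees by taking each $E_j$ to be the disk about the origin of measure $e_j$, for which $\prod_j\one_{E_j}\circ L_j\equiv 1$ on a neighbourhood of $0\in\reals^{4}$. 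The genuine estimate is a multiplicative upper bound: for each pair $i\ne j$ there is $C_{ij}\in(0,\infty)$, depending only on $\scriptl$, with $\Theta(\be)\le C_{ij}\,e_i e_j$. To prove it I would discard the two factors with indices outside $\{i,j\}$ and evaluate $\int_{\reals^{4}}\one_{E_i}(L_i(\bx,\by))\,\one_{E_j}(L_j(\bx,\by))\,d\bx\,d\by$ by two linear changes of variables: for fixed $\bx$, the substitution $\by\mapsto(L_i^{2}(\by),L_j^{2}(\by))$, a bijection by nondegeneracy, reduces the $\by$-integral to a constant multiple of $\phi_i(L_i^{1}(\bx))\,\phi_j(L_j^{1}(\bx))$ with $\phi_k(u)=\int_{\reals}\one_{E_k}(u,v)\,dv$; then, $\bx\mapsto(L_i^{1}(\bx),L_j^{1}(\bx))$ being also a bijection, integrating in $\bx$ produces $\int\phi_i\cdot\int\phi_j=|E_i|\,|E_j|$.

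For the second assertion, fix $\be$, put $\lambda=\Theta(\be)>0$, and set $S=\{\be'\in(0,\infty)^{4}:\ \be'\le\be,\ \Theta(\be')=\lambda\}$, which is nonempty since $\be\in S$. For any $\be'\in S$ the multiplicative bound gives $e_i'e_j'\ge\lambda/C_{ij}$ for every $i\ne j$, and since $e_j'\le e_j$ this forces $e_i'\ge\lambda/(C_{ij}e_j)>0$; hence $S$ is contained in a compact box $\prod_i[\rho_i,e_i]$ with every $\rho_i>0$, and as $S$ is also closed (by continuity of $\Theta$ and closedness of $\{\be':\be'\le\be\}$), it is compact. The continuous function $\be'\mapsto\sum_i e_i'$ then attains its minimum over $S$ at some $\be^{*}$, and $\be^{*}$ is admissible: any $\be''<\be^{*}$ lies in $(0,\infty)^{4}$ and satisfies $\be''<\be^{*}\le\be$, so $\be''\le\be$; thus if in addition $\Theta(\be'')=\Theta(\be^{*})=\lambda$, then $\be''\in S$ with $\sum_i e_i''<\sum_i e_i^{*}$, contradicting minimality. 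Since $\be^{*}\le\be$ and $\Theta(\be^{*})=\Theta(\be)$, the second assertion follows.

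The first assertion is then immediate. Given $t>0$, take $\be_0=(1,1,1,1)$; by the properties above $0<\Theta(\be_0)<\infty$, so $s:=(t/\Theta(\be_0))^{1/2}$ is a well-defined positive number and, by homogeneity, $\Theta(s\be_0)=s^{2}\Theta(\be_0)=t$. Applying the second assertion to $\be_1:=s\be_0$ produces an admissible $\be'\le\be_1$ with $\Theta(\be')=\Theta(\be_1)=t$.

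I expect the multiplicative bound to be the only real obstacle — not because its proof is difficult (it is two changes of variables, once one invokes the nondegeneracy of the pair maps $\bx\mapsto(L_i^{1}(\bx),L_j^{1}(\bx))$ and $\by\mapsto(L_i^{2}(\by),L_j^{2}(\by))$), but because it is the single point at which the structure of $\scriptl$, rather than formal properties of $\Theta$, enters, and it is exactly what keeps the family $S$ uniformly away from the coordinate hyperplanes. The additive Lipschitz bound by itself does not rule out $\inf\{e_1':\be'\in S\}=0$, so without the multiplicative bound one could not conclude that $S$ has a minimal element, nor that an admissible $\be'\le\be$ exists at all; with it, $S$ is a genuinely compact subset of the open orthant $(0,\infty)^{4}$ and minimizing $\sum_i e_i'$ completes the argument.
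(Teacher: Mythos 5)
Your argument is correct and follows essentially the same route as the paper: continuity of $\Theta$ together with the nondegeneracy bound $\Lambda(\bE)\le C\,|E_i|\,|E_j|$ makes the relevant level set of $\Theta$ a compact subset of the open orthant, from which one extracts a minimal element (the paper minimizes lexicographically, you minimize $\sum_i e_i'$ --- both work). Your choice to build the constraint $\be'\le\be$ into $S$ from the outset in fact delivers the second assertion slightly more directly than the paper's box $\{e_j\le A\}$, and your two-change-of-variables proof of the bilinear bound is exactly the computation the paper leaves implicit.
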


\begin{proof}
$\Theta(r\be)=\Theta(re_1,re_2,re_3,re_4)=r^2\Theta(\be)$
for any $r\in(0,\infty)$ and $\be\in(0,\infty)^4$.
Therefore for any $t\in(0,\infty)$ there exists $\tilde\be$ satisfying $\Theta(\tilde\be)=t$.

Let $t>0$.
Choose $A$ so that there exists $\bE$ satisfying $\Lambda_{\mc{L}}(\bE)=t$ 
with $|E_j|\le A$ for each $j$.
Let $S$ be the set of all $\be$ satisfying $\Theta(\be)=t$
and $e_j\le A$ for each $j$. The choice of $A$ ensures that $S\ne\emptyset$.
It is a consequence of the continuity of $\Theta$ that $S$ is closed. 
Since the nondegeneracy hypothesis ensures that 
\[ \Lambda(\bE) \le C |E_i|\cdot|E_j| \text{ for any $i\ne j\in\scripti$}\]
where $C<\infty$ depends only on $\scriptl$,
it follows that $\inf_{\be\in S}\min_{i\in\scripti} e_i$ is strictly positive.
Thus $S$ is a compact subset of the open upper quadrant.

Let $\bar e_1 = \underset{\be\in S}\min \, e_1$. Define
\[\bar e_2 = \underset{\substack{\be\in S\\ e_1={\bar e_1}}}\min\, e_2 \]
and iterate this process to define $\bar e_3$ and then $\bar e_4$.
Because $S$ is compact, these quantities $\bar e_i$ exist. Because $S$ is closed, 
$\bar{\be}=(\bar e_1,\bar e_2,\bar e_3,\bar e_4)$ lies in $S$. 
The construction guarantees that there exists no $\be\in S$ satisfying $\be<\bar{\be}$. 
\end{proof}

To any $\scriptl$, any ordered tuple $(E_j,E_k,E_l)$, and any index $i$ such that $\{i,j,k,l\}=\set4$ 
is associated a unique function
$K_i: \reals^2\to[0,\infty)$ characterized by the relation
\begin{equation} \label{Kidefn}
\Lambda(E_1,E_2,E_3,E_4) = \langle \one_{E_i},K_i\rangle = \int K_i \one_{E_i}
\ \text{ for every $E_i\subset\reals^2$.}
\end{equation}

If $\bE^0$ is a $4$--tuple of balls in $\reals^2$ centered at the origin, and if $\scriptl=\scriptl^0$
satisfies the full symmetry hypothesis of Definition~\ref{defn:fullRBLL}
then the associated quantities $K_i^0$ are radially symmetric for each $i\in\set4$.

\begin{definition}
Let $\scriptl^0$ satisfy the 
full symmetry hypothesis of Definition~\ref{defn:fullRBLL}.
Let $\bE^0$ be a $4$--tuple of balls in $\reals^2$ centered at the origin and let $\be=|\bE^0|$. 
$(\scriptl^0,\be)$ is strictly admissible if for each $i\in\set4$,
$K_i^0>0$ in some neighorhood of $\partial E_i^0$,
and $\frac{d}{du}K_i^0(u^-,0)<0$,
where $u\in(0,\infty)$ is defined by the property that $(u,0)$ belongs to
the boundary of $E_i^0\subset\reals^2$.
\end{definition}

The notation $\frac{d}{du}K_i^0(u^-,0)$ denotes
the one-sided derivative \[\lim_{h\to 0^-} h^{-1} \big( K_i^0(u+h,0)-K_i^0(u,0) \big).\]

It is shown in \cite{christoneill}
that if $\scriptl^0$ is nondegenerate\footnote{This statement is
proved for $d\ge 2$ in \cite{christoneill}. The corresponding statement for $d=1$, with a
supplementary genericity hypothesis, is proved in \cite{christBLLeq}.} 
and satisfies the full symmetry hypothesis of Definition~\ref{defn:fullRBLL},
and if $(\scriptl^0,\be)$ is strictly admissible,
then every maximizer $\bE$ satisfying $|\bE|=\be$ for $\Lambda_{\scriptl^0}$
is in the orbit of a $4$--tuple of balls centered at the origin in $\reals^2$
under the symmetry group generated by translations and by the diagonal action of $\sl(2)$.
Thus each $E_i$ is an ellipse, these ellipses are homothetic, and the the $4$--tuple of their centers
belongs to the orbit of $(0,0,0,0)\in (\reals^2)^4$ under the translation symmetry group.

$K_i$ can be written in the form
\begin{equation} \label{Kiintegral}
K_i(u) = \int_{\reals^2} \prod_{j\ne i} \one_{E_j}(\ell_{j,i}(u,v))\,dv
\end{equation}
where $\ell_{j,i}:\reals^4\to\reals^2$ are surjective linear maps
of the form 
\begin{equation} \label{Kiintegralform}
\ell_{j,i}(u,v) = \big(\ell_{j,i,1}(u_1,v_1),\ell_{j,i,2}(u_2,v_2)\big)
\end{equation}
with $\ell_{j,i,m}:\reals^2\to\reals^1$ linear and surjective.
Moreover, $\ell_{j,i}-\ell^0_{j,i} = O(\norm{\scriptl-\scriptl^0})$
and $\ell_i^0 = (\ell_{j,i}^0: j\in\set4\setminus\{i\})$
commute with the diagonal action of $\sl(2)$.
$\{\ell_{j,i}: j\ne i\}$ is nondegenerate in the sense that
for any two distinct indices $j,k\ne i$,
the linear mapping $(u,v)\mapsto (\ell_{j,i}(u,v),\ell_{k,i}(u,v))$
from $\reals^2\times\reals^2$ to $\reals^2\times\reals^2$ is nonsingular.

An extra condition which had not previously appeared in the theory
for the fully symmetric framework arises naturally in our analysis. 
Formulation of this condition requires some additional notation.
Let $\scriptl^0$ be nondegenerate and satisfy the full symmetry hypothesis 
of Definition~\ref{defn:fullRBLL}.
Let $(\scriptl^0,\be)$ be strictly admissible.
Let $\bE^0=(E_j^0: j\in\scripti)$ 
be a tuple of balls centered at the origin of size $|\bE^0|=\be$. For any $i\ne j\in\set4$, 
and any $w\in\reals^2$,
define the sets $\tilde E_j(w)\subset\reals^2$ by 
\[\one_{E_j^0}(\ell_{j,i}(w,v)) = \one_{\tilde E_j(w)}(v).\]
These sets are balls, whose centers are $\reals^2$--valued
linear functions of $w\in\reals^2$; the mappings from $\reals^2\owns w$
to their centers are radially symmetric functions.

Write $\scripti = \{i,j,k,l\}$, with $i$ playing the same role as in 
\eqref{Kiintegral} and \eqref{Kiintegralform}.
Let $w=(u,0)\in\partial E_i^0$ with $u>0$.
The Lebesgue measure of $\tilde E_j(u,0)\cap\tilde E_k(u,0)\cap\tilde E_l(u,0)\subset\reals^2$ 
equals $K_i^0(u,0)$, which is strictly positive by the strict admissibility hypothesis.
The centers of these closed three balls $\tilde E_j(u,0),\tilde E_i(u,0),\tilde E_l(u,0)$
lie on the horizontal axis in $\reals^2$.
Strict admissibility implies that none of these three balls is contained in the interiors
of the other two. 

\begin{definition} \label{genericity}
Let $\scriptl^0$ be nondegenerate and satisfy the full symmetry hypothesis
of Definition~\ref{defn:fullRBLL}, and let $(\scriptl^0,\be)$ be strictly admissible.
Let $\bE^0$ be a $4$--tuple of balls centered at $0$ satisfying
$|E_j^0|=e_j$, let $u\in(0,\infty)$ be defined by $(u,0)\in \partial E_i^0$, and for $w\in\reals^2$ let $\tilde E_j(w)$ be associated to $E_j$ as above.
$(\scriptl^0,\be)$ is generic if one of the following two mutually exclusive cases holds:
\newline
(i) After some permutation of $(j,k,l)$, $\tilde E_j(u,0)\cap \tilde E_k(u,0)$
is contained in the interior of $\tilde E_l(u,0)$,
and the boundary of $\tilde E_j(u,0)\cap \tilde E_k(u,0)$
consists of a subarc of the boundary of $\tilde E_j(u,0)$ and a subarc 
of the boundary of $\tilde E_k(u,0)$, meeting transversely at two points.
\newline
(ii) The threefold intersection $\tilde E_j(u,0)\cap\tilde E_k(u,0)\cap\tilde E_l(u,0)$ is a connected,
simply connected domain whose boundary is a piecewise $C^\infty$
curve consisting of $4$ subarcs of circles, with two of these arcs contained in the boundary
of one of the three closed balls, exactly one of the arcs contained in the boundary of another of the three closed balls, the final arc contained in the boundary of the remaining closed ball, 
and with arcs meeting transversely where they intersect on the boundary.
\end{definition}

The case that is excluded by this definition of genericity 
is that in which, after permutation of $(j,k,l)$,
$\tilde E_l(u)$ contains $\tilde E_j(u)\cap\tilde E_k(u)$, but the interior of $\tilde E_l(u)$
does not contain this intersection.
In this situation, the boundary of the three-fold intersection consists of one subarc
of $\partial\tilde E_j(u)$ and one subarc of $\partial\tilde E_k(u)$, meeting transversely,
but the points at which these subarcs meet also belong to $\partial\tilde E_l(u)$.
This situation is unstable, giving rise to either case (i) or (ii) upon arbitrary small
perturbation. We expect this instability to lead to failure 
of the boundary of components $E_i$ of maximizing tuples $\bE$ 
to be $C^\infty$, for general perturbations $\scriptl$ of $\scriptl^0$.

The following example shows that the property that ${\bf{e}}$ is generic does not follow from ${\bf{e}}$ being strictly admissible: Let ${\bf{E}}$ be a $4$-tuple of subsets of $\R^2$ and consider the functional 
\begin{align}
 \int_{\R^4} 1_{E_1}(x_1,y_1)1_{E_2}(x_2,y_2)1_{E_3}(x_1+x_2,y_1+y_2)1_{E_4}(x_1-x_2,y_1-y_2)d{\bf{z}} 
\end{align} 
written in coordinates ${\bf{z}}=(x_1,x_2,y_1,y_2)$. The data ${\bf{e}}=(1,1,r,1)$ is strictly admissible for the above functional if $1<r<2$. Because the functional satisfies the symmetry hypotheses of the Rogers-Brascamp-Lieb-Luttinger inequality, the sets $(B,B,B_r,B)$, where $B$ is the radius 1 ball centered at the origin and $B_r$ is the radius $r$ ball centered at the origin, extremize the functional restricted to tuples of sets of size $|{\bf{E}}|=(1,1,r,1)$. Using the notation from the previous definition with $i=1$, we have $u=1$, $\tilde{E_1}=\tilde{E}_4=B$ and $\tilde{E}_3=B_r$. The intersection
\[ B\cap(B_r-(1,0))\cap (B+(1,0)) \]
varies in type, satisfying case (i) or (ii) from Definition \ref{genericity}, and sometimes neither, for different values of $1<r<2$, as shown in Figure \ref{fig:intersections} below.  
\begin{figure}[h!]
    \centering
\begin{tabular}{ cc }

\begin{tikzpicture}
\draw[step=.5cm,gray,very thin] (-4,4) (4,4);
\draw [color=blue](0,0) circle (1cm);
\draw [color=red](1,0) circle (1cm);
\draw [color=green](-1,0) circle (1.3cm);
\draw (0.3,1.7) node {$r=1.3$}; \end{tikzpicture} & \begin{tikzpicture}
\draw[step=.5cm,gray,very thin] (-4,4) (4,4);
\draw [color=blue](0,0) circle (1cm);
\draw [color=red](1,0) circle (1cm);
\draw [color=green](-1,0) circle (1.73205080757cm);
\draw (1,1.5) node {$r=\sqrt{3}$};
\end{tikzpicture} \\
  \multicolumn{2}{c}{\begin{tikzpicture}
\draw[step=.5cm,gray,very thin] (-4,4) (4,4);
\draw [color=blue](0,0) circle (1cm);
\draw [color=red](1,0) circle (1cm);
\draw [color=green](-1,0) circle (1.85cm);
\draw (1.3,1.4) node {$r=1.85$};
\end{tikzpicture}
} \\

\end{tabular}
    \caption{The balls that form $B\cap(B_r-(1,0))\cap (B+(1,0))$ for different values of $r$. The intersection is of type (i) for $r=1.3$, type (ii) for $r=1.85$, and neither of type (i) nor of type (ii) for $r=\sqrt{3}$. }
    \label{fig:intersections}
\end{figure}
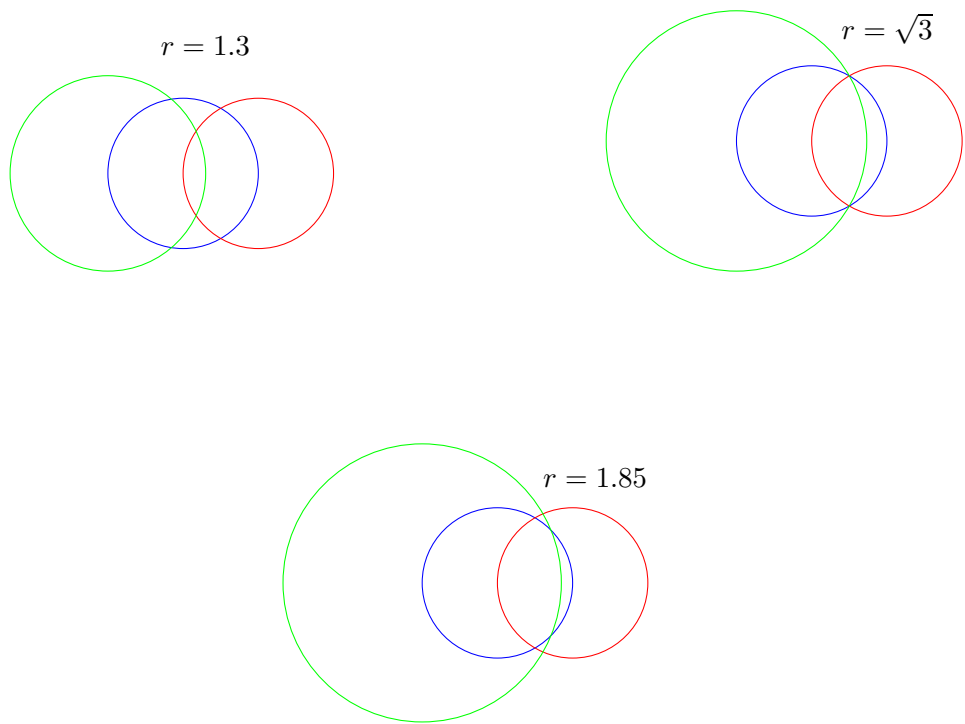

\section{Main results}

The main results of this paper are as follows.

\begin{theorem}[Existence of maximizers] \label{thm:existence}
Let $\scriptl$ be nondegenerate and satisfy the partial symmetry hypothesis \eqref{PSH}.
For each $\be\in(0,\infty)^4$
there exists $\bE$ satisfying $|\bE|=\be$ and $\Lambda(\bE) = \Theta(\be)$.
\end{theorem}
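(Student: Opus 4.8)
The plan is to establish existence of maximizers via the direct method: take a maximizing sequence, use the symmetries of $\Lambda_\scriptl$ to normalize it, extract a limit, and show the limit is a maximizer. The key difficulty is compactness. A priori, a maximizing sequence $\bE^{(n)}$ with $|\bE^{(n)}| = \be$ could have component sets that escape to infinity, spread out, or degenerate in shape, and the functional $\Lambda$ is only upper semicontinuous in a weak sense. We have three tools to combat this: the translation symmetry $\bv \mapsto (E_j + L_j(\bv))$, the dilation symmetry $D_t$, and the Steiner symmetrization inequality $\Lambda(\bE) \le \Lambda(\bE^\dagger)$.

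First, I would replace each $\bE^{(n)}$ by its Steiner symmetrization $\bE^{(n)\dagger}$, which by Lemma on \eqref{symmetrize} does not decrease $\Lambda$ and preserves the measures; so without loss of generality each component set $E_j^{(n)}$ is symmetric about both coordinate axes and is a finite union of segments on each vertical and horizontal line arranged symmetrically. Being doubly Steiner symmetric with prescribed finite measure gives a weak form of tightness in each set individually, but the four sets are coupled through the linear maps $L_j$, and the dilation $D_t$ can simultaneously stretch one direction while compressing the other, so a single set being Steiner symmetric does not prevent the tuple from running off to the ``boundary'' of the dilation orbit (one set becoming a long thin needle). The role of nondegeneracy is exactly to control this: the bound $\Lambda(\bE) \le C|E_i||E_j|$ for $i\ne j$ shows $\Theta(\be) > 0$ requires all $e_j > 0$, and more refined pointwise bounds coming from nondegeneracy should show that if some $E_j^{(n)}$ degenerates into a needle (say very long in the $x$-direction and very thin in the $y$-direction) then, after using $D_t$ to renormalize, the constraint that $\Lambda$ stays bounded below forces the other sets into an incompatible shape, contradicting maximality — essentially one shows $\Lambda_\scriptl$ is small unless the tuple lies in a compact (modulo translation) family of doubly-symmetric sets.

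So the main steps are: (1) pass to a doubly Steiner symmetric maximizing sequence via \eqref{symmetrize}; (2) apply the dilation $D_{t_n}$ to normalize, e.g., so that the ``aspect ratio'' of one distinguished component, such as $\sup\{|x|: (x,y)\in E_1^{(n)}\}/\sup\{|y|:(x,y)\in E_1^{(n)}\}$, is comparable to $1$ — here one must check this sup is finite and positive, using that a doubly symmetric set of positive finite measure with $\Lambda$ bounded below cannot have a component that is essentially a line segment, which follows from the nondegeneracy-based $L^\infty$-type bounds on the $K_i$; (3) use translation symmetry if needed, though doubly Steiner symmetric sets are already centered so this may be unnecessary for the symmetrized sequence; (4) establish that the normalized sequence is tight — each $E_j^{(n)}$ is contained, up to small measure, in a fixed large box — again invoking nondegeneracy to rule out mass escaping to infinity; (5) extract a subsequence converging in $L^1_{loc}$ (equivalently, the indicator functions converge in measure on bounded sets) to sets $E_j$ with $|E_j| \le e_j$, using a diagonal/helly-type argument on the monotone slice-length functions afforded by Steiner symmetry; (6) pass to the limit in $\Lambda$ — since $\Lambda$ is continuous under $L^1$ convergence of the indicators with uniformly bounded, tight supports (cf. the local Lipschitz bound \eqref{basicbound}), obtain $\Lambda(\bE) = \lim \Lambda(\bE^{(n)}) = \Theta(\be)$; (7) if the limit lost mass, i.e. $|E_j| < e_j$ for some $j$, then enlarge $E_j$ arbitrarily to restore measure $e_j$ — since $K_j \ge 0$ this only increases $\Lambda$ — concluding that $\Theta(\be)$ is attained.

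The step I expect to be the main obstacle is step (4), ruling out loss of compactness: simultaneously controlling the escape of mass to infinity and the degeneration of shape within the noncompact dilation group, for a coupled four-tuple, using only Steiner symmetrization in two fixed directions plus nondegeneracy. In the fully symmetric case one uses rotational Steiner symmetrization in all directions to directly force each $E_j$ to be a ball, which trivializes compactness; here one has to argue more delicately, quantifying how $\Lambda_\scriptl(\bE)$ decays when one component of a doubly-symmetric tuple becomes very eccentric or very spread out, and this quantitative decay estimate — presumably a lemma extracting smallness of $\Lambda$ from the geometry of the $K_i$ via \eqref{Kiintegral} and nondegeneracy of $\{\ell_{j,i}\}$ — is the technical heart of the argument.
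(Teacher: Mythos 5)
Your outline follows the paper's general architecture (symmetrize via $\bE\mapsto\bE^\dagger$, renormalize by a dilation $D_t$, extract a weak/Helly limit of the monotone slice functions, pass to the limit, repair any lost mass by enlarging sets), and your instinct that the dilation group and needle-degeneration are a real issue is correct --- that is exactly what the Compatibility Lemma~\ref{lemma:rectangle2} and Corollary~\ref{cor:centralsquare} handle. But there is a genuine gap at your step (4): you propose to derive tightness (``each $E_j^{(n)}$ is contained, up to small measure, in a fixed large box'') from nondegeneracy alone, and this cannot work. A doubly Steiner symmetric set of fixed finite measure can still send a positive fraction of its mass to infinity along thin tentacles along the coordinate axes, and nondegeneracy gives no obstruction to a maximizing sequence splitting into two widely separated sub-configurations, each carrying a positive share of $\Lambda$. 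Indeed, the paper points out that precompactness of symmetrized maximizing sequences is \emph{false} when $(\scriptl,\be)$ is not admissible: if $\Theta(\be')=\Theta(\be)$ for some $\be'<\be$, one manufactures maximizing sequences whose excess mass wanders off to infinity. So no argument that ignores the value function $\Theta$ can close step (4), and consequently your step (6) --- continuity of $\Lambda$ along the sequence --- is unjustified: the weak limit only sees the non-escaping part, and a priori $\Lambda(\bE)<\lim\Lambda(\bE^{(n)})$.

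The missing ingredient is the notion of admissibility (Definition~\ref{admissible}) and the two places it enters. First, the general case is reduced to the admissible case at the outset: choose admissible $\be'\le\be$ with $\Theta(\be')=\Theta(\be)$ and enlarge a maximizer for $\be'$ (your step (7) is this idea, but deployed where it cannot help, since at that stage you do not yet know $\Lambda(\bE)=\Theta(\be)$). Second, and this is the heart of the proof of Theorem~\ref{thm:precompact}, one decomposes $\Lambda(\bE^{(n)})$ into $\Lambda(\bE^{(n)}\cap\bE)+\Lambda(\bE^{(n)}\setminus\bE)$ plus cross terms that vanish (Lemma~\ref{lemma:claimmain'}), and then rules out the dichotomy branch: if the escaping part $\bE^{(n)}\setminus\bE$ carried a positive share of $\Lambda$, then by Corollary~\ref{cor:centralsquare} both the limit configuration and the symmetrized, dilated escaping configuration contain a fixed central cube; recombining them by a large translation (using the translation symmetry) then produces competitors showing $\Theta(\be)\le\Theta(e_1,e_2-c,e_3,e_4)$ for some $c>0$, contradicting admissibility. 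The same admissibility argument then forces $|E_i^n\cap E_i|\to e_i$, i.e.\ no mass is actually lost in the limit. Without formulating admissibility and this strict-superadditivity-via-overlapping-cubes mechanism, the concentration-compactness dichotomy cannot be closed and the direct method stalls.
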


It will suffice to prove Theorem~\ref{thm:existence} in the admissible case.
For we have shown that
if $(\scriptl,\be)$ is not admissible, then there exists an admissible $\be'\le\be$
satisfying $\Theta(\be')=\Theta(\be)$.
If there exists $\bE'$ satisfying $|\bE'|=\be'$
and $\Lambda(\bE')=\Theta(\be')=\Theta(\be)$,
then any tuple $\bE$ satisfying $E_j\supset E'_j$ for each $j\in\set4$
and $|\bE|=\be$ is a maximizer for $(\scriptl,\be)$.

For admissible $(\scriptl,\be)$ a stronger result will be proved.

\begin{theorem}[Qualitative stability of maximizers] \label{thm:precompact} 
Let ${\bf{e}}\in(\R^+)^4$ be admissible. Let ${\bf{E}}^{(n)}$ be a sequence of tuples 
satisfying 
$|{\bf{E}}^{(n)}|={\bf{e}}$, $\underset{n\to\infty}\lim \Lambda({\bf{E}}^{(n)}) = \Theta({\bf{e}})$, 
and $(\bE^{(n)})^\dagger = \bE^{(n)}$. 
Then there exist a subsequence of indices $n_k$, 
real numbers $\lambda_k>0$, and a tuple ${\bf{E}}$ such that for each index $i\in\scripti$, 
\begin{equation} \lim_{k\to\infty} |E_i\symdif D_{\lambda_k}(E_i^{n_k})| = 0.  
\end{equation} 
\end{theorem}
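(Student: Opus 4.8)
The plan is to run a concentration--compactness argument adapted to the noncompact dilation group $\{D_t\}$, the only remaining continuous symmetry in the admissible, doubly Steiner--symmetric setting. First I would record the geometry forced by $(\bE^{(n)})^\dagger=\bE^{(n)}$: each $E_i^{(n)}$ has the form $\{(p,q):|q|\le h_i^{(n)}(|p|)\}$ with $h_i^{(n)}$ nonincreasing and $4\int_0^\infty h_i^{(n)}=e_i$, equivalently $\{|p|\le g_i^{(n)}(|q|)\}$ with $g_i^{(n)}=(h_i^{(n)})^{-1}$ also nonincreasing. Hence $s\mapsto|E_i^{(n)}\cap\{|p|\le s\}|$ and $s\mapsto|E_i^{(n)}\cap\{|q|\le s\}|$ are concave and increasing; mass can be lost only at the two ``ends'' $|p|\to\infty$ and $|q|\to\infty$; and since $\int h_i^{(n)}$ is fixed, the horizontal slice $\{|p|=R\}\cap E_i^{(n)}$ has half--width $h_i^{(n)}(R)\le e_i/(4R)$, so the part of $E_i^{(n)}$ in $\{|p|>R\}$ is a long thin horizontal strip, and symmetrically for $\{|q|>R\}$. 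Using the $D_t$--invariance of $\Lambda$ and of $|\cdot|$, I would replace $\bE^{(n)}$ by $D_{t_n}\bE^{(n)}$, choosing $t_n$ so that $|E_1^{(n)}\cap\{|p|\le1\}|=|E_1^{(n)}\cap\{|q|\le1\}|$; both sides are monotone in the dilation parameter and sweep across $(0,e_1)$, so such $t_n$ exists, and their common value stays in a fixed compact subinterval of $(0,e_1)$ (were it to tend to $0$, almost all of $|E_1^{(n)}|$ would sit in a long low $p$--tail, which by the profile relation would push $|E_1^{(n)}\cap\{|q|\le1\}|$ up to $e_1$). So after normalization $E_1$ does not run off to scale $0$ or $\infty$; I assume this done, and write $\lambda_k$ for the accumulated dilation factor along the subsequence below.

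Next I would apply Helly's selection theorem to the monotone profiles, passing to a subsequence $n_k$ with $\one_{E_i^{(n_k)}}\to\one_{E_i}$ a.e.\ for a $\dagger$--symmetric tuple $\bE$; Fatou gives $|E_i|=e_i'\le e_i$ and finiteness of the limit profiles, and since each $L_i$ pulls back null sets to null sets, $\prod_i\one_{E_i^{(n_k)}}(L_i\bx)\to\prod_i\one_{E_i}(L_i\bx)$ a.e. Fixing a large $R$, I split $E_i^{(n_k)}=C_i\sqcup F_i\sqcup G_i$ into a core $C_i=E_i^{(n_k)}\cap[-R,R]^2$, a $p$--escaping thin strip $F_i\subset\{|p|>R,\ |q|<e_i/(4R)\}$, and a $q$--escaping thin strip $G_i$, then expand $\Lambda(\bE^{(n_k)})$ over the $3^4$ choices of a piece at each index and estimate. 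The all--core term tends (as $k\to\infty$, then $R\to\infty$) to $\Lambda(\bE)$ by the $L^1$--Lipschitz bound \eqref{basicbound}. The crucial estimate, the one using nondegeneracy and the product form \eqref{PSH} decisively, is that the ``interaction'' terms are negligible: any term pairing an $F$--factor with a $G$--factor at distinct indices forces a third set to be sampled at a point both of whose coordinates have size $\gtrsim R$, impossible for a fixed finite--measure set whose profile has collapsed; and any term pairing an escaping factor with two or more core factors forces that escaping set to be integrated only over a bounded--ratio shell $\{R<|p|<CR\}$, whose mass $\to0$ as $R\to\infty$ by finiteness of $|E_i|$. What survives is then controlled by $\Lambda(\bE)+\Lambda((F_i)_i)+\Lambda((G_i)_i)$, and rescaling by $D_{1/R}$ and $D_R$ respectively (which renders the strips bounded in the collapsed coordinate) gives $\Lambda((F_i)_i)\le\Theta((|F_i|)_i)\to\Theta(\tau)$ and $\Lambda((G_i)_i)\le\Theta(\sigma)+o(1)$, where $\tau_i,\sigma_i$ are the escaped masses, with $e_i=e_i'+\tau_i+\sigma_i$. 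Thus
\begin{equation}\label{ccUB}
\Theta(\be)=\lim_k\Lambda(\bE^{(n_k)})\le\Lambda(\bE)+\Theta(\tau)+\Theta(\sigma).
\end{equation}

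To finish I would combine \eqref{ccUB} with the triangle inequality \eqref{triangle} (used twice, as $\be=\be'+\tau+\sigma$) and with $\Lambda(\bE)\le\Theta(\be')$:
\[\Theta(\be)\ge\Theta(\be'+\sigma)+\Theta(\tau)\ge\Theta(\be')+\Theta(\tau)+\Theta(\sigma)\ge\Lambda(\bE)+\Theta(\tau)+\Theta(\sigma)\ge\Theta(\be),\]
so all inequalities are equalities; in particular $\Lambda(\bE)=\Theta(\be')$ and $\Theta(\be)=\Theta(\be'+\sigma)+\Theta(\tau)$. If $\tau\neq0$ had a vanishing component then $\Theta(\tau)=0$, giving $\Theta(\be'+\sigma)=\Theta(\be)$ with $\be'+\sigma<\be$, contradicting admissibility of $\be$ (and the positivity $\Theta(\be)>0$ covers the degenerate case of a zero coordinate in $\be'+\sigma$); if every component of $\tau$ were positive, the splitting $\be=(\be'+\sigma)+\tau$ would make \eqref{triangle} an equality between two tuples with all coordinates positive, contradicting the strict form of \eqref{triangle}, namely $\Theta(\be_1+\be_2)>\Theta(\be_1)+\Theta(\be_2)$ for $\be_1,\be_2\in(0,\infty)^4$, which one proves by perturbing an optimal disjoint concatenation (as in the proof of \eqref{triangle}) so as to render one of the remaining $2^4-2$ interaction terms strictly positive. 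Symmetrically $\sigma=0$. Hence $\be'=\be$, $\Lambda(\bE)=\Theta(\be)$, and since $|E_i|=e_i=|E_i^{(n_k)}|$ with $\one_{E_i^{(n_k)}}\to\one_{E_i}$ a.e., Scheff\'e's lemma upgrades this to $|E_i\symdif D_{\lambda_k}E_i^{(n_k)}|\to0$, which is the assertion.

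The step I expect to be the main obstacle is the negligibility of the interaction terms in the $3^4$--fold expansion, together with the correct choice and order of the cutoffs $R$ and $k$: one must rule out mass escaping at one end from conspiring with the bulk, or with mass at the other end, to keep $\Lambda$ large, and this is precisely where the product structure \eqref{PSH} and nondegeneracy are used (they confine every offending evaluation to a region where some component set has vanishing density). In parallel one must establish the strict superadditivity of $\Theta$ invoked above to exclude the escape of a full bubble. The remaining ingredients --- the normalizing scale, Helly, Fatou, Scheff\'e, and the $L^1$--continuity of $\Lambda$ --- are routine.
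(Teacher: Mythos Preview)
Your concentration--compactness framework is the right strategy, and you correctly flag the two hard spots. But as written, both are genuine gaps, and the paper closes them by a different route than you sketch.

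\textbf{Interaction terms.} Your two heuristic rules do not cover all $3^4-3$ mixed terms; for instance $(C_1,F_2,F_3,F_4)$ is handled by neither (a) nor (b), yet is not the all--$F$ term. Rule (b) is also not sufficient: the mass of $E_i^{(n_k)}$ in a shell $\{R<|p|<CR\}$ need not tend to $0$ uniformly in $k$, and you have not yet established any tightness beyond $E_1$ --- your normalization balances only $E_1$, so $E_2,E_3,E_4$ may well concentrate on such shells. The paper replaces your core/tail split by the set--theoretic split $E_i^{(n)}=(E_i^{(n)}\cap E_i)\cup(E_i^{(n)}\setminus E_i)$, a $2^4$ expansion, and shows the cross terms vanish via the compatibility Lemma~\ref{lemma:rectangle2}: if no single rectangle captures a uniformly positive fraction of all four sets then $\Lambda$ is small. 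That lemma (and its dyadic--rectangle Sublemmas~\ref{lemma:simplest4bound}, \ref{lemma:Ssum}) is the real engine; it is also what yields the \emph{joint} normalization (Corollary~\ref{cor:centralsquare}), placing a fixed cube $\tilde Q$ inside every $E_i^{(n)}$ simultaneously. Your single--index balancing cannot substitute for this.

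\textbf{Ruling out the escaping bubble.} Your proposed direct proof of strict superadditivity --- perturb the far--translated concatenation to make one of the $2^4-2$ cross terms positive --- does not work as stated: those cross terms all tend to $0$ as the translation grows, so near--optimality and positivity of a cross term pull in opposite directions. In the paper, strict superadditivity (Corollary~\ref{cor:strictsublinearity}) is proved only \emph{after} Theorem~\ref{thm:precompact}, via an overlap argument that requires maximizers to exist. The precompactness proof itself avoids strict superadditivity entirely: assuming $\limsup\Lambda(\bE^{(n)}\setminus\bE)>0$, apply Corollary~\ref{cor:centralsquare} to the symmetrized remainder to produce (after a further dilation) a cube $Q_\delta$ inside every $(E_i^{(n)}\setminus E_i)^\star$. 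Since $\tilde Q$ already lies in $E_i\cap E_i^{(n)}$, the two pieces overlap on $\tilde Q\cap Q_\delta$, so one can pack both into a tuple with $|E_2|$--component strictly smaller than $e_2$ while retaining $\Lambda\ge\Theta(\be)-o(1)$, contradicting admissibility. This overlap mechanism is what you are missing.

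In short: your outline is sound, but you should replace your geometric $3^4$ split and ad hoc interaction estimates by the $2^4$ split together with the compatibility lemma, and replace the appeal to strict superadditivity by the direct overlap contradiction with admissibility.
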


Theorem~\ref{thm:existence} for admissible $(\scriptl,\be)$ is a direct consequence of 
Theorem~\ref{thm:precompact}. Indeed, let $\be$ be admissible.
According to the definition of $\Theta(\be)$ as a supremum
and by virtue of the symmetrization inequality $\Lambda(\bE)\le\Lambda(\bE^\dagger)$,
there exists a sequence $(\bE_\nu: \nu\in\naturals)$
of tuples 
satisfying $\bE_\nu = \bE_\nu^\dagger$, $|\bE_\nu|=\be$, 
and $\underset{\nu\to\infty}\lim \Lambda(\bE_\nu) = \Theta(\be)$. 
Since $\Lambda$ and the relation $|\bE|=\be$ are invariant under the group of dilations
$D_t$, the continuity of $\Lambda$ implies that the tuple $\bE=(E_i: i\in\set4)$
whose existence is guaranteed by Theorem~\ref{thm:precompact}
satisfies $\Lambda(\bE) = \lim_{\nu\to\infty} \Lambda(\bE_\nu) = \Theta(\be)$. Theorem~\ref{thm:precompact} is proved in \textsection\ref{precmpct}. 

In a perturbative regime we obtain information about symmetrized maximizers $\bE$.

\begin{theorem}[Convexity and regularity] \label{thm:bootstrap}
Let $\scriptl^0$ satisfy the full symmetry hypothesis
of Definition~\ref{defn:fullRBLL}.
Suppose that $(\scriptl^0,\be)$ is strictly admissible and generic.
There exists $\delta>0$ with the following property.
Let $\scriptl$ satisfy the limited symmetry hypothesis
and satisfy $\norm{\scriptl-\scriptl^0}<\delta$.
Let $\bE$ satisfy $|\bE|=\be$ and be a maximizer for $\Lambda_\scriptl$.
Suppose that $\bE^\dagger=\bE$. 
Then for each $j\in\set4$,
 $E_j$ is a strongly convex set with $C^\infty$ boundary.
\end{theorem}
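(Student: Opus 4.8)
The plan is to turn maximality into a coupled system of Euler--Lagrange free-boundary equations, reduce by compactness to the regime in which everything is close to the fully symmetric reference configuration, and then run a single regularity bootstrap for all four boundaries $\partial E_j$ at once; the genericity hypothesis is exactly what will make the crucial ``gain of a derivative'' estimate for the kernels $K_i$ go through. Concretely: since $\Lambda_\scriptl(\bE)=\langle\one_{E_i},K_i\rangle$ with $K_i$ as in \eqref{Kidefn}--\eqref{Kiintegral} and $\bE$ maximizes over tuples with $|\bE|=\be$, each $E_i$ coincides, up to a null set, with a superlevel set $\{K_i>r_i\}$, where $r_i\ge 0$ is fixed by $|E_i|=e_i$; admissibility (which, I would check, persists for $\scriptl$ close to $\scriptl^0$) forces $|\{K_i=r_i\}|=0$, so this identification is clean. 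Using the structure recorded before Definition~\ref{genericity} — each $\tilde E_m(w)$ is a fixed invertible affine image of $E_m$ whose center depends linearly on $w$, and $K_i(w)=\big|\bigcap_{m\ne i}\tilde E_m(w)\big|$ — the kernel $K_i$ is continuous by $L^1$--continuity of translation, so each $E_i$ may be taken open, and $K_i(w)\to 0$ as $|w|\to\infty$ because nondegeneracy makes the translates $\tilde E_m(w)$ drift apart, so each $E_i$ is bounded. Finally $\bE^\dagger=\bE$ makes each $E_i$ Steiner symmetric about both coordinate axes.

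Next I would reduce to the near-symmetric regime: there is $\delta_0>0$ such that if $\norm{\scriptl-\scriptl^0}<\delta_0$, $\bE^\dagger=\bE$, $|\bE|=\be$, and $\Lambda_\scriptl(\bE)=\Theta(\be)$, then $\sum_j|E_j\symdif E_j^0|$ is as small as desired, where $\bE^0$ is the tuple of origin-centered balls of measures $\be$. One argues by contradiction: a sequence $\scriptl_n\to\scriptl^0$ with symmetrized maximizers $\bE_n$ staying a fixed distance from all such $\bE^0$ would, through the compactness argument of Theorem~\ref{thm:precompact} (which is uniform for $\scriptl$ near $\scriptl^0$), yield along a subsequence, after dilation, a symmetrized maximizer for $\scriptl^0$ at $\be$; by the uniqueness theorem of \cite{christoneill} (valid since $(\scriptl^0,\be)$ is strictly admissible and nondegenerate) that maximizer is a tuple of origin-centered balls, a contradiction. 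Shrinking $\delta$, I also record that $(\scriptl,\be)$ remains strictly admissible, that $r_i$ is close to $r_i^0>0$, that $\partial E_i$ is a single Jordan curve close to $\partial E_i^0$, and that for $w$ near $\partial E_i$ the set $\bigcap_{m\ne i}\tilde E_m(w)$ has one of the combinatorial types (i), (ii) of Definition~\ref{genericity} inherited from $\scriptl^0$, with all bounding arcs meeting transversally. A preliminary coupled argument then supplies the initial regularity needed to seed the smoothness bootstrap: the incidence set $\{(w,v):\ell_{m,i}(w,v)\in E_m\ \text{for all }m\ne i\}$ is convex whenever the $E_m$ are convex, so Brunn--Minkowski makes $w\mapsto K_i(w)^{1/2}$ concave on its support and hence $E_i=\{K_i>r_i\}$ convex; bootstrapping this from the reference ball case and the closeness just established yields convexity of every $E_i$, so in particular $\partial E_i$ is locally a Lipschitz graph.

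The heart of the proof is then the implication: if every $\partial E_j$ is of class $C^{k,\alpha}$ (with $C^{0,1}$ permitted as the base case), then every $K_i$ is of class $C^{k+1,\alpha}$ on a neighborhood of $\{K_i=r_i\}$ with $\nabla K_i\ne 0$ there, whence by the implicit function theorem every $\partial E_i=\{K_i=r_i\}$ is of class $C^{k+1,\alpha}$; iterating gives $\partial E_j\in C^\infty$ for all $j$. For the gain of regularity, $K_i(w)$ is the area of a domain whose boundary is a union of finitely many subarcs of the curves $\partial\tilde E_m(w)$, $m\ne i$, which translate affinely and hence depend $C^\infty$ on $w$; by the transversality and locally constant combinatorial type from the previous step the arc endpoints are $C^{k,\alpha}$ functions of $w$ (implicit function theorem), and Hadamard's first-variation formula expresses $\nabla_w K_i(w)$ as a boundary integral of the normal velocity of $\partial\big(\bigcap_{m\ne i}\tilde E_m(w)\big)$, which reduces by the divergence theorem to evaluating the $C^{k,\alpha}$ curves at these $C^{k,\alpha}$ endpoints; thus $\nabla_w K_i\in C^{k,\alpha}$ and $K_i\in C^{k+1,\alpha}$. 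Nonvanishing of $\nabla K_i$ near the level set follows from $\tfrac{d}{du}K_i^0(u^-,0)<0$ together with $C^1$--closeness of $K_i$ to $K_i^0$. Once $\partial E_i$ is $C^\infty$, its curvature is a universal expression in $\nabla K_i$ and the restriction of $\nabla^2 K_i$ to the level set; these are $C^0$--close to the corresponding quantities for $K_i^0$, whose level set $\partial E_i^0$ is a circle of positive curvature, so for $\delta$ small the curvature of $\partial E_i$ is bounded below by a positive constant, giving strong convexity.

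The step I expect to be the main obstacle is the coupled bootstrap itself: the regularity of each $\partial E_i$ is governed by $K_i$, which is built from the three sets $E_m$, $m\ne i$, so the improvement must be carried out simultaneously in all four indices, and it is precisely the genericity hypothesis that guarantees $\partial\big(\bigcap_{m\ne i}\tilde E_m(w)\big)$ is assembled from transversally meeting arcs following a locally constant combinatorial pattern — without it a tangential contact would obstruct the gain of a derivative in $K_i$. A secondary delicate point is extracting the initial convexity (equivalently, Lipschitz regularity of the $\partial E_j$) from the a priori merely measurable maximizer, and verifying that the compactness and uniqueness inputs of the reduction step are genuinely uniform as $\scriptl\to\scriptl^0$.
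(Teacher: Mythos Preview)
Your overall architecture --- Euler--Lagrange identification of $E_i$ as a superlevel set of $K_i$, reduction to the near-symmetric regime via precompactness and the uniqueness result of \cite{christoneill}, then a coupled bootstrap in which regularity of the $\partial E_m$ feeds into higher regularity of $K_i$ and hence of $\partial E_i$ --- matches the paper's strategy, and your understanding of the role of genericity (locally constant combinatorial type, transversal arc intersections in $\partial\Omega_i(w)$) is correct.

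The genuine gap is in your seeding of the bootstrap. Your Brunn--Minkowski step is circular: to conclude that $K_i^{1/2}$ is concave and hence that $E_i$ is convex, you need each $E_m$ with $m\ne i$ to already be convex, but that is exactly what you are trying to prove for all four indices simultaneously. There is no evident way to ``bootstrap from the reference ball case'' here, since Brunn--Minkowski is an all-or-nothing hypothesis on convexity of the inputs, not a perturbative statement. The paper does \emph{not} establish convexity first; instead it exploits the hypothesis $\bE^\dagger=\bE$ directly. Double Steiner symmetry implies that each $E_j$ meets every horizontal and every vertical line in an interval, and together with the diameter bound this yields $|(E_j+w)\symdif E_j|=O(|w|)$, hence Lipschitz continuity of each $K_i$ with no convexity assumed. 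The substantial technical work (the paper's Lemma~\ref{lemma:littleLip}) is then to upgrade this to $\norm{K_i-K_i^0}_{\mathrm{Lip}}=o_\delta(1)$ near $\partial E_i^0$: one writes $K_i(u)=|\Omega_i(u)|$ via Stokes' theorem as a sum of line integrals $\int_\gamma v_1\,dv_2$ over arcs of $\partial\Omega_i(u)$, isolates small disks $Q_\alpha$ around the corner points (whose contribution is Lipschitz with $o_\delta(1)$ norm), and compares the remaining arc integrals to those for $K_i^0$ using only the monotonicity inherited from $\bE^\dagger=\bE$. Only after this does one conclude that $E_i$ is a Lipschitz domain (Corollary~\ref{cor:Lip}), at which point your Hadamard-type variation argument --- essentially the paper's formula \eqref{C1formula} --- takes over and the $C^k\Rightarrow C^{k+1}$ iteration proceeds as you describe, with strong convexity following from $C^2$--closeness of $K_i$ to $K_i^0$.
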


Strong convexity of a $C^2$ domain means that the domain is convex, and that its boundary has nonzero curvature at every point. Theorem~\ref{thm:bootstrap} is proved in \textsection\ref{perturb}.

A final result 
indicates that the partial symmetry hypothesis \eqref{PSH} is not entirely artificial.

\begin{theorem} \label{thm:nonexistence}
Let $\scriptl^0$ satisfy the full symmetry hypothesis \eqref{RBLLsymmetry}.
There exist nondegenerate $\scriptl$ arbitrarily close to $\scriptl^0$
such that for any $\be$ such that $(\scriptl^0,\be)$
is strictly admissible, there exist no maximizers $\bE$
for $\Lambda_\scriptl$ satisfying $|\bE|=\be$.
\end{theorem}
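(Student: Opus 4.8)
The plan is to exhibit an explicit one-parameter family of perturbations $\scriptl_\eps$ of $\scriptl^0$ for which the dilation symmetry $D_t$ is destroyed while a \emph{different} noncompact symmetry is created, and then to use that noncompact symmetry together with the strict-admissibility structure to run a ``sliding to infinity'' argument. Concretely, starting from a fully symmetric $\scriptl^0$ of the form $L_i^0(\bx,\by)=(L_i^1(\bx),L_i^1(\by))$, I would perturb one of the second components in a way that mixes the $\bx$ and $\by$ variables, say $L_i^2(\bx,\by)=L_i^1(\by)+\eps\,M_i(\bx)$ for a generic choice of linear functionals $M_i:\reals^2\to\reals$, so that each $L_i^\eps$ still surjects onto $\reals^2$ and the tuple is nondegenerate (an open condition, so preserved for small $\eps$), but \eqref{PSH} now fails in the first coordinate. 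The point of such a perturbation is that it retains the translation symmetry by $\reals^4$ but (for generic $M_i$) admits no conjugate of the $D_t$-action: the group of linear symmetries of $\Lambda_{\scriptl_\eps}$ on $(\reals^2)^4$ is strictly smaller than for $\scriptl^0$. One then has to show this genuinely forces nonexistence.

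The core of the argument is the following dichotomy for maximizers. Suppose $\bE$ maximizes $\Lambda_{\scriptl_\eps}$ with $|\bE|=\be$ and $(\scriptl^0,\be)$ strictly admissible. By the Euler--Lagrange relation \eqref{Kidefn}, each $E_i$ agrees up to null sets with a superlevel set $\{K_i\ge c_i\}$ of the function $K_i$ built from the other three sets via \eqref{Kiintegral}. For $\scriptl^0$ these $K_i^0$ are radial with $\tfrac{d}{du}K_i^0(u^-,0)<0$, so the maximizer is the tuple of balls; the plan is to show that for the perturbed structure no tuple can simultaneously satisfy all four Euler--Lagrange relations. The mechanism: the missing dilation invariance means the maximizing tuple cannot be ``balanced'' against its own symmetrizations. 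More precisely, I would argue that for the perturbed $\scriptl_\eps$, $\Theta_{\scriptl_\eps}(\be)=\Theta_{\scriptl^0}(\be)$ (the perturbation can only help along a suitable noncompact direction) but the supremum is not attained, because any near-maximizing tuple can be improved by applying a transformation $T_s$ (arising from the new, larger closure of the symmetry semigroup, e.g.\ a shear combined with a rescaling going off to infinity) that strictly increases $\Lambda_{\scriptl_\eps}$ while preserving the measures, contradicting attainment. This is where the strict-admissibility hypothesis enters: strict admissibility of $(\scriptl^0,\be)$ guarantees, via the uniqueness theorem of \cite{christoneill}, that the \emph{only} candidate maximizers for the unperturbed problem are ellipsoids in the $\sl(2)$-orbit of balls, so it suffices to check that no ellipsoidal tuple is critical for $\Lambda_{\scriptl_\eps}$, which reduces to a finite-dimensional nondegeneracy computation: the gradient of $\ep\mapsto\Lambda_{\scriptl_\eps}(\text{ellipsoid tuple})$ in the ``shape'' parameters is nonzero for generic $M_i$.

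The key steps, in order, are: (1) fix the perturbation family $\scriptl_\eps$ and verify nondegeneracy and the failure of any conjugate of the $D_t$-symmetry for generic parameters; (2) show $\Theta_{\scriptl_\eps}(\be)\ge\Theta_{\scriptl^0}(\be)$ by evaluating $\Lambda_{\scriptl_\eps}$ on a cleverly chosen sequence of tuples (balls pushed and sheared to infinity) and passing to the limit; (3) show $\Theta_{\scriptl_\eps}(\be)\le\Theta_{\scriptl^0}(\be)+o(1)$, or more to the point, that any putative maximizer $\bE$ for $\scriptl_\eps$ would, after Steiner symmetrization (legitimate by Lemma on \eqref{symmetrize}, which only uses \eqref{PSH}---so I must be careful here, since the perturbed maps need not satisfy \eqref{PSH}; instead I would argue directly from the Euler--Lagrange relations \eqref{Kidefn}) force $\bE$ to be within $o(1)$ of an ellipsoidal tuple, then reach a contradiction with criticality; (4) carry out the finite-dimensional computation showing no ellipsoidal tuple is $\scriptl_\eps$-critical. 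The main obstacle I anticipate is step (3): the Steiner-symmetrization toolkit is unavailable for the perturbed maps since they violate \eqref{PSH}, so the compactness/rigidity input that pins the near-maximizer to the ellipsoidal family has to be imported as a \emph{stability} statement for the fully symmetric problem---one needs that near-maximizers of $\Lambda_{\scriptl^0}$ at strictly admissible data are close (in symmetric difference, after an $\sl(2)$ transformation and translation) to ellipsoidal tuples, together with enough uniformity to absorb the $O(\eps)$ error from the perturbation. Establishing that quantitative stability, or alternatively finding a softer argument (e.g.\ directly producing an improving transformation $T_s$ from an explicit computation of $\frac{d}{ds}\Lambda_{\scriptl_\eps}(T_s\bE)\big|_{s=0}>0$ valid for \emph{every} admissible tuple, bypassing rigidity entirely), is the crux.
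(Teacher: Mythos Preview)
Your perturbation family is exactly the one the paper uses, and your step (2) --- showing $\Theta_{\scriptl_\eps}(\be)\ge\Theta_{\scriptl^0}(\be)$ by sending sheared/dilated configurations to infinity --- is essentially the paper's computation $\Lambda_{\scriptl}(D_t\bE)\to\Lambda_{\scriptl^0}(\bE)$ as $t\to 0$. But you miss the observation that makes the rest of the argument short. Although the perturbed maps $L_j(\bx,\by)=(L_{j,1}^0(\bx),\,L_{j,2}^0(\by)+\ell_j(\bx))$ violate \eqref{PSH}, vertical Steiner symmetrization \emph{alone} still applies: for each fixed $\bx$, the inner integral $\int_{\reals^2}\prod_j \one_{E_{j,L_{j,1}(\bx)}}(L_{j,2}^0(\by)+\ell_j(\bx))\,d\by$ is a one-dimensional Rogers--Brascamp--Lieb--Luttinger form acting on subsets of $\reals^1$, where no symmetry hypothesis is needed. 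This gives $\Lambda_{\scriptl}(\bE)\le\Lambda_{\scriptl^0}(\bE^\ddagger)$ directly, hence $\Theta_{\scriptl}(\be)=\Theta_{\scriptl^0}(\be)$, and moreover any maximizer $\bE$ for $\Lambda_\scriptl$ has $\bE^\ddagger$ simultaneously maximizing $\Lambda_\scriptl$ and $\Lambda_{\scriptl^0}$. One then invokes the characterization of $\scriptl^0$--maximizers from \cite{christoneill} together with the equality case of the one-dimensional inequality, slice by slice in $\bx$, to force $(\ell_j(\bx):j\in\scripti)$ into the range of $(L_{j,2}^0:j\in\scripti)$ for all small $\bx$. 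A generic choice of $\ell$ avoids this, and no maximizer exists.

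Your proposed substitute for step (3) --- importing a quantitative stability statement for $\Lambda_{\scriptl^0}$ and then checking that no ellipsoidal tuple is $\scriptl_\eps$--critical --- has a genuine gap beyond the difficulty you flag. Even granting stability, a maximizer for $\Lambda_{\scriptl_\eps}$ is only \emph{close} to an ellipsoidal tuple, not equal to one; verifying that ellipsoids are not critical points says nothing about whether some nearby non-ellipsoidal tuple is. You would need either an inverse-function-theorem argument (requiring control of the Hessian of $\Lambda_{\scriptl_\eps}$ on an infinite-dimensional space of sets) or the improving-transformation idea you mention at the end, and neither is straightforward. The paper bypasses all of this because the vertical symmetrization inequality, combined with equality in $\Theta$, pins a putative maximizer \emph{exactly} to a $\scriptl^0$--maximizer rather than merely near one.
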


Proposition~\ref{prop:nonexistence}
states more specifically that for tuples of mappings $\scriptl$ of the form
$L_j(\bx,\by)=\big(L_{j}^1(\bx),L_{j}^2(\bx,\by)\big)$, 
maximizers $\bE$ cannot exist unless 
$L_{j}^2(\bx,\by)$ takes a special form
which makes the functional $\Lambda_\scriptl$ equivalent,
in a natural way, to $\Lambda_{\tilde\scriptl}$
where $\tilde\scriptl$ satisfies \eqref{PSH}.
It is proved in Section~\ref{section:nonexistence}.

\section{Conjectures}

In this paper we explore a rather specific situation 
in the hope of building insight into what is true, and what might be proved,
in a far broader framework.
It is natural to venture various conjectures in this regard.
In all of these conjectures, we assume that $(\scriptl,\be)$ satisfies
appropriate nondegeneracy and admissibility hypotheses, whose formulations
are perhaps not yet known.

\begin{conjecture} 
For generic $(\scriptl,\be)$, 
maximizers $\bE$ are not tuples of ellipsoids.
\end{conjecture}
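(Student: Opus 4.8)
The plan is to combine the regularity theorem (Theorem~\ref{thm:bootstrap}) with the rigidity of the Euler--Lagrange system to show that, in the generic perturbative regime, a symmetrized maximizer cannot consist of ellipsoids. I would argue by contradiction. Suppose $(\scriptl,\be)$ is a small generic perturbation of a fully symmetric strictly admissible $(\scriptl^0,\be)$, that $\bE$ is a symmetrized maximizer (i.e.\ $\bE^\dagger=\bE$), and that each $E_j$ is an ellipsoid. Since $\bE^\dagger = \bE$, each ellipsoid $E_j$ is symmetric under both coordinate reflections, hence its axes are the horizontal and vertical axes; being Steiner symmetric also forces each $E_j$ to be centered at the origin. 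Thus $\bE$ would be a $4$--tuple of origin-centered, axis-aligned ellipsoids $E_j = D_{t_j}(B_{r_j})$ for suitable $t_j>0$, $r_j>0$, where $B_r$ is the disk of radius $r$.

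The next step is to exploit the generalized Euler--Lagrange relation: for each $i$, $\partial E_i$ is (up to the free-boundary ambiguity) a level set of the function $K_i$ defined by \eqref{Kidefn}, \eqref{Kiintegral}. For the fully symmetric tuple $\scriptl^0$ with the $4$--tuple of origin-centered balls, $K_i^0$ is radial and the corresponding $E_i^0$ is a ball, so the relation is consistent. The heart of the matter is to show that for a generic perturbation $\scriptl$ this consistency \emph{fails} for any tuple of ellipsoids: if $\bE$ is a tuple of axis-aligned origin-centered ellipsoids, then $K_i$ (computed by the integral \eqref{Kiintegral} with the perturbed maps $\ell_{j,i}$) is, in general, \emph{not} constant on the boundary of an ellipsoid of \emph{any} eccentricity. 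Concretely, $K_i(u) = |\tilde E_j(u)\cap \tilde E_k(u)\cap\tilde E_l(u)|$ where, by \eqref{Kiintegralform}, each $\tilde E_m(u)$ is an axis-aligned ellipsoid whose center is a linear function of $u=(u_1,u_2)$; I would compute the gradient of $K_i$ along $\partial E_i$ using the arc-length/coarea formula for how the measure of an intersection of smooth convex regions changes as their centers move (this is where genericity case (i) or (ii) of Definition~\ref{genericity} is used: it guarantees the boundary of the triple intersection is a finite union of transverse circular arcs, so $K_i$ is piecewise smooth and its derivative is an explicit sum of boundary-integral terms). One then checks that the requirement ``$\nabla K_i$ is everywhere normal to the ellipsoid $\partial E_i$'' is a system of equations on the finitely many parameters $(t_j, r_j)$ which, for $\scriptl$ generic, has no solution: the fully symmetric datum $\scriptl^0$ is the unique (up to the dilation and translation symmetries) solution, and a transversality/dimension count shows the solution set is cut out non-degenerately there, so it disappears under generic perturbation.

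I would structure the quantitative part as a stability statement: there is a smooth function $\Phi(\scriptl, t_1,\dots,t_4,r_1,\dots,r_4)$ measuring the failure of the ellipsoidal tuple to solve the Euler--Lagrange system, with $\Phi(\scriptl^0,\cdot)$ vanishing exactly on the symmetry orbit, and with the Hessian of $\Phi(\scriptl^0,\cdot)$ transverse to that orbit (this transversality is the genuinely new computation and is where I expect the heavy lifting; it amounts to showing that breaking the $\sl(2)$--symmetry genuinely moves the problem off the ellipsoidal locus to first order, rather than merely rotating or dilating within it). Given such transversality, the implicit function theorem applied in the complementary directions shows $\Phi(\scriptl,\cdot)$ stays bounded away from $0$ for $\norm{\scriptl-\scriptl^0}$ small and $(t_j,r_j)$ in the relevant compact range, contradicting the assumption that $\bE$ is a maximizer (a maximizer must satisfy the Euler--Lagrange relation, by the first-variation argument underlying Theorem~\ref{thm:bootstrap}). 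Hence no ellipsoidal maximizer exists.

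The main obstacle, as indicated, is verifying the non-degeneracy (transversality) of the vanishing locus of $\Phi$ at $\scriptl^0$: one must show that the first-order change in the ``normal derivative of $K_i$ along $\partial E_i$'' induced by a generic infinitesimal perturbation of $\scriptl$ is not absorbed by an infinitesimal change of ellipsoidal parameters together with the ambient symmetries. This requires an explicit, if lengthy, computation of $\partial_\scriptl K_i$ at the symmetric configuration, using \eqref{Kiintegral}--\eqref{Kiintegralform} and the dependence $\ell_{j,i}-\ell_{j,i}^0 = O(\norm{\scriptl-\scriptl^0})$; a secondary (but more delicate) point is that at the finitely many values of the eccentricity parameters where the triple intersection degenerates out of cases (i)/(ii) — the unstable configuration described after Definition~\ref{genericity} — $K_i$ is only piecewise $C^1$, so the argument must either avoid those parameter values or treat the one-sided derivatives there separately.
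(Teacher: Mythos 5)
This statement is posed in the paper as a conjecture (the first one of Section 5); the paper offers no proof of it, so there is no argument of the authors to compare yours against, and your proposal must stand on its own. It does not. The decisive step --- showing that for generic $\scriptl$ the Euler--Lagrange condition, namely that each $\partial E_i$ be a level set of the associated $K_i$, admits no solution among tuples of axis-aligned origin-centered ellipsoids --- is exactly the mathematical content of the conjecture, and you defer it to an unperformed ``transversality computation,'' which you yourself flag as the heavy lifting. A dimension count cannot substitute for that computation: the condition that $K_i$ be constant on $\partial E_i$ is an infinite family of scalar equations in the finitely many ellipsoid parameters, so the system is formally overdetermined already at $\scriptl^0$, where it nonetheless has solutions by the full symmetry; one must actually compute the first variation of the restriction of $K_i$ to $\partial E_i$ in the $\scriptl$-directions, modulo the tangent space of the ellipsoid-parameter and symmetry orbit, and verify it is nonzero. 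Until that is done, the argument is a reformulation of the conjecture in Euler--Lagrange language rather than a proof of it. There is also a real technical obstruction you only mention in passing: at the eccentricities where the triple intersection leaves cases (i)/(ii) of Definition~\ref{genericity}, $K_i$ is not $C^1$, and since the ellipsoid parameters $(t_j)$ range over a continuum, you cannot simply ``avoid'' those values --- the function $\Phi$ you propose need not even be continuous there, which undermines the implicit function theorem step.

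Two further gaps. First, you reduce at the outset to maximizers satisfying $\bE=\bEdagger$, but the double Steiner symmetrization of a tilted ellipse is in general not an ellipse, so ruling out symmetrized ellipsoidal maximizers does not rule out ellipsoidal maximizers as such; that reduction would require the paper's second conjecture (every maximizer is a translate of a symmetrized one), which is equally open, or else a separate argument that an ellipsoidal maximizer must already be axis-aligned and centered. Second, the conjecture is stated for generic $(\scriptl,\be)$ with no perturbative hypothesis, whereas your entire mechanism --- closeness of $K_i$ to the radial $K_i^0$, the genericity dichotomy of Definition~\ref{genericity}, the regularity bootstrap of Theorem~\ref{thm:bootstrap} --- lives in the regime $\norm{\scriptl-\scriptl^0}$ small. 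Even a completed version of your argument would establish only the perturbative case of the conjecture.
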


\begin{conjecture} 
For generic $(\scriptl,\be)$ satisfying the partial symmetry hypothesis \eqref{PSH},
any maximizer $\bE$ is a translate of a maximizer that satisfies $\bE = \bEdagger$.
\end{conjecture}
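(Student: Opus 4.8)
The plan is to prove that any maximizer $\bE$ for $\Lambda_\scriptl$ (with $\scriptl$ satisfying \eqref{PSH}) can be replaced, without decreasing $\Lambda$ and without changing the measures $|\bE|$, by a translate of a tuple satisfying $\bE=\bE^\dagger$. The strategy has three ingredients: (1) the symmetrization inequality $\Lambda(\bE)\le\Lambda(\bE^\dagger)$ from Lemma~2.10, which already tells us $\bE^\dagger$ is \emph{also} a maximizer; (2) a rigidity analysis of the equality case $\Lambda(\bE)=\Lambda(\bE^\sharp)=\Lambda(\bE^\flat)$; and (3) the translation symmetry of $\Lambda$, which is the only way a genuine (non-symmetric) maximizer can arise from a symmetric one.

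First I would record that if $\bE$ is a maximizer then $\Lambda(\bE)=\Lambda(\bE^\sharp)=\Lambda((\bE^\sharp)^\flat)$, so both Steiner inequalities in \eqref{Steinersymminequality} are equalities at each stage. The heart of the matter is a characterization of equality in a single Steiner symmetrization step for functionals of the form \eqref{Lambdadefn1} under \eqref{PSH}. For the one-dimensional Riesz–Sobolev–type situation that arises after slicing, the equality cases of Steiner symmetrization are classical (Burchard's thesis \cite{burchard}, and \cite{christflock}): under a nondegeneracy/admissibility hypothesis, equality $\Lambda(\bE)=\Lambda(\bE^\sharp)$ forces each $E_j$ to differ from $E_j^\sharp$ by a \emph{vertical translation} $E_j = E_j^\sharp + (0,c_j)$ for constants $c_j$ subject to the compatibility condition $L_j(\text{translation vector})$ being consistent across $j$ — i.e., the translation must lie in the translation-symmetry orbit of $\Lambda$. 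Concretely: equality in the vertical Steiner inequality implies there is $\bv^\sharp\in\reals^4$ with $E_j = (E_j^\sharp) + L_j(\bv^\sharp)$ up to null sets; likewise equality in the horizontal step gives $E_j^\sharp = (E_j^\dagger) + L_j(\bv^\flat)$. Composing, $\bE$ is a translate (in the symmetry orbit) of $\bE^\dagger$, and $\bE^\dagger=\bE^\dagger{}^\dagger$ is symmetric, which is exactly the conclusion.

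The main obstacle, and where the bulk of the work lies, is establishing that sharp equality characterization in the requisite generality — in particular handling the fact that our Steiner symmetrizations act on $\reals^2$-valued components, so that the ``vertical slice'' at a point $\bx\in\reals^2$ is itself a one-dimensional Riesz–Sobolev configuration whose equality analysis must be run uniformly in $\bx$ and then integrated. One must show that the family of slice-translation vectors $c_j(\bx)$ is (a.e.) constant in $\bx$; this is where a connectedness/measurability argument is needed, together with the nondegeneracy hypothesis (Definition~\ref{newnondegeneracy}) to rule out degenerate slices, and an admissibility hypothesis to rule out the ``loose'' regime (in which, as in the Riesz–Sobolev discussion preceding \eqref{Kidefn}, maximizers have no structure and the conclusion can genuinely fail). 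I expect that the correct hypotheses under which to state the conjecture are precisely nondegeneracy plus admissibility of $(\scriptl,\be)$, possibly augmented by a genericity condition in the spirit of Definition~\ref{genericity} to guarantee that the equality cases of the slice problems are the rigid ones rather than the exceptional unstable configurations. Once the slice-uniformity is in hand, the remaining steps — assembling the $c_j(\bx)\equiv c_j$ into a single $\bv\in\reals^4$ via the surjectivity and compatibility of the $L_j$, and checking that $\bE^\dagger$ is then a maximizer with the same measures — are routine.
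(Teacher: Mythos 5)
This statement is Conjecture 5.2 of the paper: the authors explicitly leave it open and offer no proof, so there is nothing to compare your argument against, and your proposal must stand on its own. As written it does not: it is a plausible outline (very likely the strategy the authors have in mind), but the step you label as ``the heart of the matter'' is precisely the open content of the conjecture, and moreover the rigidity statement you assert there is not what the known equality analysis delivers. Equality in the vertical Steiner step forces, for a.e.\ $\bx$, equality in a one-dimensional Brascamp--Lieb--Luttinger configuration for the slices $E_{j,L_j^1(\bx)}$. Even granting the 1D equality theorem of \cite{christBLLeq} (which itself requires nondegeneracy, strict admissibility \emph{and} genericity of the slice data), the conclusion is only that each slice is an interval with center $c_j(z)$ depending on the horizontal coordinate $z$, subject to the constraint that $(c_j(L_j^1(\bx)))_{j}$ lies in the range of $\by\mapsto(L_j^2(\by))_j$ for a.e.\ $\bx$. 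Solving this functional constraint under nondegeneracy yields \emph{affine} $c_j$, i.e.\ shears composed with translations, not translations alone. Under the full symmetry hypothesis shears are symmetries of $\Lambda$ and can be absorbed, but under mere partial symmetry \eqref{PSH} they are not, so your claimed conclusion ``$E_j=E_j^\sharp+L_j(\bv^\sharp)$'' does not follow from a single symmetrization step; some further argument (e.g.\ playing the horizontal and vertical equality cases against each other, or a variational argument showing a genuinely sheared maximizer is not critical) is needed to exclude shears, and you do not supply one.

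Two further gaps. First, the 1D equality characterization only applies to slices whose data $(|E_{j,L_j^1(\bx)}|)_j$ is strictly admissible and generic; for an arbitrary maximizer there will in general be a set of $\bx$ of positive measure (near the ``edges'' of the sets) where the slice data is inadmissible and equality imposes no rigidity, and you have no a priori control of the slice measures because --- unlike in the proof of Proposition~\ref{prop:nonexistence}, where the maximizer of $\Lambda_{\scriptl^0}$ is already known to be a tuple of ellipsoids --- here the maximizer is completely unknown. Second, patching the slice-wise conclusions together requires the centers $c_j$ to be measurable functions defined a.e.\ and the constraint to propagate across the inadmissible region; this is a genuine connectedness/regularity issue that you flag but do not resolve. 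In short: the reduction to the Steiner equality case is correct and costs nothing, but everything after that is asserted rather than proved, and one of the assertions (translation rather than shear) is stronger than what the cited equality theory gives.
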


\begin{conjecture} 
Let $(\scriptl^0,\be)$ be nondegenerate and strictly admissible.
For generic admissible $(\scriptl,\be)$ satisfying \eqref{PSH}
with $\scriptl$ sufficiently close to $\scriptl^0$,
maximizers of $\Lambda_\scriptl$ that satisfy $\bE = \bEdagger$
are unique up to measure-preserving dilations of $\reals^2$.
\end{conjecture}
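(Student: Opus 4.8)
The plan is to anchor everything at the fully symmetric concentric-ball configuration and to propagate its uniqueness by a Lyapunov--Schmidt/implicit function theorem argument, after first using compactness to confine attention to a neighborhood of that configuration. When $\scriptl=\scriptl^0$, the uniqueness theorem of \cite{christoneill} identifies the maximizers with $|\bE|=\be$: up to translation and the diagonal $\sl(2)$-action, the only one is the tuple $\bE^0$ of concentric balls of the prescribed measures, and among these the $\dagger$-symmetric maximizers are exactly the dilates $D_t\bE^0$, $t\in\reals^+$ (a nonzero translate of a ball is not reflection-symmetric about both axes, and $T\bE^0$ for $T\in\sl(2)$ is $\dagger$-symmetric iff $T$ acts on $\bE^0$ as some $D_t$), so the conjecture already holds as an equality for $\scriptl^0$. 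First I would establish a joint version of Theorem~\ref{thm:precompact}: if $\scriptl^{(n)}\to\scriptl^0$, $|\bE^{(n)}|=\be$, $\bE^{(n)}=(\bE^{(n)})^\dagger$, and $\Lambda_{\scriptl^{(n)}}(\bE^{(n)})\to\Theta_{\scriptl^0}(\be)$, then after a dilation $D_{\lambda_n}$ a subsequence converges in $L^1$ to a $\dagger$-symmetric maximizer for $\scriptl^0$, hence to some $D_t\bE^0$. The proof copies \textsection\ref{precmpct}; the only new point is the $\scriptl$-dependence, which is controlled because Steiner symmetry together with admissibility confines the near-maximizing sets to a fixed ball, and on a fixed ball $\scriptl\mapsto\Lambda_\scriptl(\bE)$ is continuous uniformly in $\bE$, so the failure of uniform $\scriptl$-continuity noted in the text is harmless here. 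Consequently, for every $\eps>0$ there is $\delta>0$ so that every $\dagger$-symmetric maximizer $\bE$ for $\Lambda_\scriptl$ with $\norm{\scriptl-\scriptl^0}<\delta$ and $|\bE|=\be$ satisfies $\max_j|E_j\symdif D_tE_j^0|<\eps$ for some $t=t(\bE)$; replacing $\bE$ by $D_{1/t}\bE$, it suffices to prove uniqueness within an $\eps$-neighborhood of $\bE^0$, modulo the orbit $\{D_t\}$.

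\emph{Euler--Lagrange system and functional-analytic setup.} By Theorem~\ref{thm:bootstrap}, such a maximizer has each $E_j$ strongly convex with $C^\infty$ boundary, so I would write $\partial E_j$ as a normal graph over $\partial E_j^0$ with height $\phi_j$, small in $C^{2,\alpha}$, even under both coordinate reflections by $\dagger$-symmetry, hence $\phi_j=\sum_{m\ge 0}a_{j,m}\cos(2m\theta)$ in the angular variable. The first-variation (maximality) conditions read: there exist $\lambda_j>0$ with $K_j[\phi;\scriptl]=\lambda_j$ on $\partial E_j$ for each $j$, together with $|E_j|=e_j$, where $K_j$ is as in \eqref{Kidefn}--\eqref{Kiintegral}. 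Package these as $\mathcal{F}(\phi,\lambda;\scriptl)=0$, a map between Banach spaces of $\dagger$-symmetric data with $\mathcal{F}(0,\lambda^0;\scriptl^0)=0$, and seek to apply the implicit function theorem in $\scriptl$.

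\emph{Nondegeneracy of the ball, and conclusion.} The crux is that the linearization $L:=D_{(\phi,\lambda)}\mathcal{F}(0,\lambda^0;\scriptl^0)$ is Fredholm of index $0$ with kernel exactly the line spanned by the infinitesimal dilation $\partial_t|_{t=1}D_t\bE^0$. Because $\scriptl^0$ is fully symmetric, $L$ is equivariant under rotations and axis reflections and block-diagonalizes over the angular modes $m=0,2,4,\dots$ permitted by the $\dagger$-symmetry; the mode-$0$ block is invertible because the area constraints pin the radial scalings, and one is left with finite-dimensional blocks in modes $m=2,4,\dots$. The infinitesimal dilation is a pure mode-$2$ vector (the normal velocity of $D_t$ on a circle is $\propto\cos 2\theta$) and lies in $\ker L$ since $D_t$ carries maximizers to maximizers with the same data; the claim is that it spans the mode-$2$ kernel and that all higher modes are nonresonant. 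I would deduce this from the second variation of $\Lambda_{\scriptl^0}$ at $\bE^0$: since $\bE^0$ is a maximizer, the Hessian on volume-preserving $\dagger$-symmetric variations is negative semidefinite, and strictness transverse to the dilation direction should follow, mode by mode, from strict admissibility and genericity of $(\scriptl^0,\be)$ --- the infinitesimal shadow of the global uniqueness theorem, with genericity excluding vanishing eigenvalues in the exceptional modes. Granting this spectral gap, the standard Lyapunov--Schmidt normalization (impose $\langle\phi,\partial_tD_t\bE^0\rangle=0$ to kill the dilation direction) and the implicit function theorem produce, for each $\scriptl$ near $\scriptl^0$, a unique small $\dagger$-symmetric solution $(\phi(\scriptl),\lambda(\scriptl))$; combined with the reduction of the first paragraph, every $\dagger$-symmetric maximizer near $\bE^0$ is a $D_t$-image of this one, proving the conjecture.

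\emph{Main obstacle.} The serious work is the spectral gap: showing that the negative semidefinite second variation of $\Lambda_{\scriptl^0}$ at the concentric-ball tuple, restricted to volume-preserving $\dagger$-symmetric variations, has kernel spanned solely by the dilation. This does not follow formally from the global uniqueness theorem, since a global statement does not preclude a degenerate critical manifold; one must compute the mode-$m$ blocks of the influence kernels $\delta K_i/\delta E_j$ at the concentric-ball configuration, verify that each block minus the known one-dimensional dilation kernel in mode $2$ is invertible, and pin down precisely which genericity condition on $(\scriptl^0,\be)$ eliminates the exceptional resonances. By comparison, the compactness reduction and the implicit-function bookkeeping are routine once Theorems~\ref{thm:precompact} and~\ref{thm:bootstrap} are in hand.
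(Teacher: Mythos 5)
The statement you are proving is Conjecture 5.3 of the paper: it appears in the section titled ``Conjectures'' and the authors offer no proof of it. So there is no argument in the paper to compare yours against; the relevant question is only whether your proposal closes the problem, and it does not. Your own ``main obstacle'' paragraph concedes the point: the entire Lyapunov--Schmidt scheme rests on showing that the linearization $L$ of the Euler--Lagrange system at the concentric-ball tuple has kernel spanned exactly by the infinitesimal dilation, and you do not establish this. Negative semidefiniteness of the second variation at a maximizer is automatic and gives nothing here --- it is perfectly consistent with a kernel of dimension greater than one (a degenerate critical manifold), and the global uniqueness theorem of \cite{christoneill} for $\scriptl^0$ does not linearize to the needed statement, since global uniqueness of maximizers does not control the null space of the Hessian. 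Identifying which angular modes of the operators $\delta K_i/\delta E_j$ on the four boundary circles are resonant, and which hypothesis on $(\scriptl^0,\be)$ excludes the resonances, is precisely the open content of the conjecture; without it the implicit function theorem cannot be invoked, and the proposal is a programme rather than a proof. (There is also unaddressed work in showing that $\mathcal F$ is $C^1$ between suitable H\"older spaces of $\dagger$-symmetric boundary perturbations and that $L$ is Fredholm; the kernels $K_i$ restricted to moving boundaries are only shown to be $C^\infty$ in the paper after a delicate bootstrap, and their smooth dependence on $(\phi,\scriptl)$ jointly is not free.)

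Two smaller points. First, your compactness reduction leans on the assertion that $\scriptl\mapsto\Lambda_\scriptl(\bE)$ is uniformly continuous over $\bE$ confined to a fixed ball; the paper explicitly warns that this map is not continuous uniformly in $\bE$, and while the claim is plausibly salvageable for $\dagger$-symmetric tuples with measures bounded below (the sets then meet every horizontal and vertical line in an interval, so $|E\symdif(E+w)|=O(|w|)$), as stated it is not justified. The joint convergence you want is essentially the content of the unnamed lemma following Corollary 8.2 in the paper, which is proved by rerunning the precompactness argument with the uniformity of Lemma 6.2 in $\scriptl$, not by a uniform continuity claim. Second, the preliminary observation that the $\dagger$-symmetric maximizers for $\scriptl^0$ are exactly the $D_t\bE^0$ is correct and worth keeping: a translate is killed by nondegeneracy, and $T\in\sl(2)$ carries concentric balls to axis-parallel origin-centered ellipses only when $T$ acts as some $D_t$ up to a rotation fixing the configuration.
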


\begin{conjecture} 
Under the partial symmetry hypothesis \eqref{PSH},
the conclusion that the component sets $E_i$ of any maximizer $\bE$ are
convex, holds with suitable strict admissibility and nondegeneracy
hypotheses on $(\scriptl,\be)$, without any hypothesis
that $\scriptl$ is a small perturbation of a tuple of mappings that possesses $\sl(d)$ invariance.
\end{conjecture}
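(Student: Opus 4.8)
The plan is to derive convexity of the components from the Euler--Lagrange description of maximizers together with a Brunn--Minkowski (Prékopa--Leindler) argument, and to break the circular coupling among the four components by a continuation argument anchored at the fully symmetric structures, where convexity is already known (maximizers are ellipses). First I would reduce to maximizers satisfying $\bE=\bEdagger$: incorporating into the ``suitable hypotheses'' the analogue of the conjecture that every maximizer is a translate of a symmetrized one, it suffices to treat such $\bE$. For any maximizer the identity $\Lambda(\bE)=\langle\one_{E_i},K_i\rangle$ of \eqref{Kidefn} and the bathtub principle force $E_i$ to coincide, up to a null set, with a superlevel set $\{u:K_i(u)\ge r_i\}$, where $K_i$ is built from $(E_j:j\ne i)$ alone; strict admissibility and nondegeneracy should be used to guarantee $K_i>0$ and $\nabla K_i\ne 0$ in a neighborhood of $\partial E_i$, so that $|\{K_i=r_i\}|=0$ and this identification is exact.

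The key conditional statement is then: if the three sets $E_j$, $j\ne i$, are convex, so is $E_i$. Indeed, by \eqref{Kiintegral} one may write $K_i(u)=|\,\Omega_i\cap(\{u\}\times\reals^2)\,|$ with $\Omega_i=\bigcap_{j\ne i}\ell_{j,i}^{-1}(E_j)\subset\reals^4$; if each $E_j$ is convex then $\Omega_i$ is a convex body, and the Brunn--Minkowski inequality applied to its two--dimensional slices shows that $u\mapsto K_i(u)^{1/2}$ is concave on the convex set $\{K_i>0\}$. Hence every superlevel set of $K_i$ is convex, so $E_i$ is convex. Thus the class of tuples whose four components are all convex is invariant under the Euler--Lagrange fixed--point relation; once a maximizer is known to lie in this class, rerunning the same computation upgrades $K_i^{1/2}$ to strict concavity near $\partial E_i$ under the genericity--type nondegeneracy, which yields strong convexity, and the bootstrapping of Theorem~\ref{thm:bootstrap} then promotes $\partial E_i$ to $C^\infty$.

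It remains to show that a maximizer does lie in the convex class. I would attempt this by a deformation: choose a path $\scriptl_s$, $s\in[0,1]$, of nondegenerate tuples satisfying \eqref{PSH} with $(\scriptl_s,\be)$ strictly admissible and generic throughout, joining a fully symmetric $\scriptl_0$ to the given $\scriptl_1=\scriptl$, and let $G$ be the set of $s$ for which every maximizer $\bE$ with $|\bE|=\be$ and $\bE=\bEdagger$ for $\Lambda_{\scriptl_s}$ has all components convex. One has $0\in G$. Closedness of $G$ should follow from a version of Theorem~\ref{thm:precompact} that allows $\scriptl$ to vary along the path --- the compactness argument there uses only Steiner symmetry, the dilation group, and admissibility, all available uniformly on a compact path of nondegenerate structures --- together with the Blaschke selection theorem, since an $L^1$ limit of indicator functions of uniformly bounded convex sets is the indicator of a convex set.

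The main obstacle is openness of $G$, equivalently, showing that a maximizer which is merely $L^1$--close to a convex configuration is in fact exactly convex. Stability gives that the three auxiliary sets $E_j$ ($j\ne i$) of a nearby maximizer are $L^1$--close to convex sets, so the non--convex part of $\Omega_i$ is contained in $\bigcup_{j\ne i}\ell_{j,i}^{-1}(\mathrm{conv}(E_j)\setminus E_j)$ and has small measure; but the convexity deficit of a superlevel set of the marginal $K_i$ is not controlled by the $L^1$--size of the non--convex part of $\Omega_i$ (the excess $\bar K_i-K_i$ of the convexified marginal can be $L^1$--small while the superlevel sets of $\bar K_i$ and $K_i$ remain far apart), so the naive loop on the four deficits does not close. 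What seems to be needed is a rigidity input --- a ``no almost--convex maximizers'' statement asserting that, near the convex configurations and under the Euler--Lagrange relation, the only admissible deficits are zero. I would try to extract this by using $\partial E_i\subset\{K_i=r_i\}$ with $\nabla K_i\ne 0$ to make $\partial E_i$ a $C^1$ hypersurface and then analyzing the sign of its curvature by a second--variation computation in the spirit of the proof of Theorem~\ref{thm:bootstrap}, but now controlling that sign through convexity of the three auxiliary sets rather than through proximity to the $\sl(2)$--invariant model. Making both halves rigorous --- uniform stability along the path, and the rigidity that closes the loop --- is the substantive content, and is why the statement is only conjectured.
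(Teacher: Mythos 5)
This statement is one of the paper's open conjectures; the paper offers no proof of it, and your proposal, as you yourself acknowledge in the last sentence, is a strategy sketch rather than a proof. The one piece that is genuinely correct and worth keeping is the conditional convexity step: if the three sets $E_j$, $j\ne i$, are convex, then $\Omega_i=\bigcap_{j\ne i}\ell_{j,i}^{-1}(E_j)$ is a bounded convex subset of $\reals^4$ (nondegeneracy gives boundedness), the Brunn--Minkowski inequality for two-dimensional slices shows $K_i^{1/2}$ is concave on its support, and hence the superlevel set $E_i$ is convex. That closes the Euler--Lagrange loop \emph{once} convexity of three components is known, but it does not produce convexity of any component from scratch, which is the whole difficulty.

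The continuation argument does not close this gap, and it has problems beyond the openness issue you flag. First, closedness of $G$ is also not established: given $s_n\to s$ with $s_n\in G$, the precompactness argument shows that \emph{some} maximizer for $\scriptl_s$ is an $L^1$-limit of convex maximizers and hence convex, but $G$ is defined by a condition on \emph{every} maximizer, and an arbitrary maximizer for $\scriptl_s$ need not arise as a limit of maximizers for $\scriptl_{s_n}$ unless one already has a uniqueness statement --- which is itself another of the paper's conjectures. Second, the reduction to $\bE=\bEdagger$ invokes yet another open conjecture of the paper. Third, strict admissibility and genericity are defined in the paper only for fully symmetric $\scriptl^0$ (they are phrased in terms of tuples of balls and the radial function $K_i^0$), so the hypothesis that $(\scriptl_s,\be)$ is ``strictly admissible and generic throughout'' the path is not yet meaningful, and the existence of such a path joining an arbitrary nondegenerate partially symmetric $\scriptl$ to a fully symmetric one is unaddressed. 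Finally, the openness step is, as you say, the substantive obstruction: $L^1$-closeness of a maximizer to a convex configuration does not control the convexity deficit of a superlevel set of the marginal $K_i$, and the second-variation rigidity you propose to extract is precisely the missing idea. So the proposal correctly identifies why the statement is hard, contributes one sound lemma, but does not constitute a proof.
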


\begin{conjecture} 
The results of this paper concerning existence and convexity of maximizers 
have analogues for generic nondegenerate data $\scriptl$ without partial symmetry.
\end{conjecture}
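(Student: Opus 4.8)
The plan is to replace the role of Steiner symmetrization in the partially symmetric theory by a concentration-compactness argument using only the translation symmetry (and the dilation symmetry, when present), together with a free-boundary bootstrap for convexity modeled on Theorem~\ref{thm:bootstrap} but run without a perturbative anchor. A preliminary task is to delineate the relevant class of data: Theorem~\ref{thm:nonexistence} shows that certain nondegenerate $\scriptl$ near $\sl(2)$-symmetric structures admit no maximizers at strictly admissible $\be$, so one must first isolate the obstruction --- heuristically, proximity to a structure carrying an extra noncompact symmetry group --- and show it is confined to a closed, nowhere dense (conjecturally positive-codimension) subset of the space of nondegenerate $\scriptl$, on whose complement the two arguments below are to be carried out.

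\textbf{Existence.} Given admissible $\be$ and a maximizing sequence $\bE^{(n)}$ with $|\bE^{(n)}|=\be$, apply a concentration-compactness trichotomy to one component, say $E_1^{(n)}$, and transfer the resulting information to the remaining components through the nondegeneracy estimate $\Lambda(\bE)\le C|E_i||E_j|$ for $i\ne j$. Vanishing is excluded by an anti-concentration estimate: if no unit ball captures a fixed fraction of the mass of any component, then $\Lambda(\bE)\to 0<\Theta(\be)$. Dichotomy, i.e.\ splitting into well-separated clusters of masses $\be'$ and $\be-\be'$, is excluded by a \emph{strict} form of super-additivity, $\Theta(\be)>\Theta(\be')+\Theta(\be-\be')$ for admissible $\be$ with $0<\be'<\be$; since $\Theta$ is homogeneous of degree two and super-additive by the triangle inequality \eqref{triangle}, strictness should follow from a quantitative lower bound on the $2^4-2$ cross terms appearing in $\Lambda\big(E_j\cup(E'_j+rL_j(\bv)):j\in\set4\big)$ when the two clusters are translated to bounded mutual distance. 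What then remains is compactness: after translating each component onto its concentrated mass the sequence is tight, and convergence of the indicator functions in $L^1_{\mathrm{loc}}$ together with tightness produces a limit $\bE$ with $|\bE|=\be$ and, by continuity of $\Lambda$, $\Lambda(\bE)=\Theta(\be)$. When the dilation symmetry is present one normalizes by it first, exactly as in Theorem~\ref{thm:precompact}; otherwise that step is omitted.

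\textbf{Convexity.} For a maximizer $\bE$ the Euler-Lagrange relation gives, up to null sets, $E_i=\{K_i\ge\lambda_i\}$, where $K_i$ is the auxiliary function of \eqref{Kidefn}--\eqref{Kiintegral} built from the other three components and $\lambda_i$ is a Lagrange multiplier. Convexity of $\bE$ then reduces to a propagation statement: if $E_j,E_k,E_l$ are convex and the associated maps $\ell_{j,i}$ are suitably nondegenerate, then the superlevel set $\{K_i\ge\lambda_i\}$ is convex, and is strictly convex with $C^\infty$ boundary once a genericity condition --- the analogue of Definition~\ref{genericity}, governing how the slices $\tilde E_j(w)$ meet along $\partial E_i$ --- is imposed. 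Granting this, one would first establish convexity of all four components (plausibly via a two-point rearrangement inequality that survives without the symmetry hypothesis, or via the continuity method below) and then upgrade to strong convexity and $C^\infty$ regularity by the coupled free-boundary bootstrap of \textsection\ref{perturb}. As an alternative to an \emph{a priori} convexity input, one may attempt a continuity argument anchored at the fully symmetric locus: the set of good nondegenerate $\scriptl$ all of whose maximizers are convex is closed, and the Euler-Lagrange analysis shows it is also open (the $K_i$ and the genericity condition vary continuously under perturbation), so it contains the connected component of the space of good data that contains the $\sl(2)$-symmetric structures --- the content of the conjecture being that this component exhausts the generic data.

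\textbf{Main obstacle.} The difficulty is concentrated in two places. First, proving strict super-additivity of $\Theta$ on the admissible set, equivalently the anti-concentration and no-splitting estimates, with no symmetrization available: in the partially symmetric theory these facts come essentially for free from Steiner symmetrization and the explicit structure of balls, and absent that tool it is not even clear that near-maximizers stay bounded. Second, and more serious, is the convexity-propagation step in the absence of a perturbative hypothesis: it asserts that a certain nondegenerate multilinear ``generalized convolution'' of indicators of convex sets has convex superlevel sets, which fails for general linear data and is not known even generically; pinning down the formulation of ``generic'' that makes it true --- and that simultaneously excludes the nonexistence data of Theorem~\ref{thm:nonexistence} --- is itself part of the problem.
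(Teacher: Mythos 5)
The statement you are addressing is one of the paper's \emph{conjectures}: the authors offer no proof, and indeed immediately caution (via Proposition~\ref{prop:nonexistence}) that the genericity restriction cannot be dropped. So there is no argument in the paper to compare yours against, and your text should be judged as a research program rather than a proof. Read that way, it is a sensible and well-organized program, and to your credit you candidly flag its two load-bearing gaps yourself. But those gaps are genuine and your proposal does not close them, so it does not establish the statement.

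Concretely: (1) Your exclusion of vanishing and dichotomy rests on an anti-concentration estimate and on strict super-additivity $\Theta(\be)>\Theta(\be')+\Theta(\be-\be')$. In the paper both of these are obtained only \emph{after} symmetrization --- the compatibility Lemma~\ref{lemma:rectangle2} is stated for $\bE=\bEdagger$ and its proof decomposes each Steiner-symmetric set into nested axis-parallel rectangles, and Corollary~\ref{cor:strictsublinearity} is proved by gluing symmetrized maximizers whose existence comes from Theorem~\ref{thm:precompact}. Without $\bE=\bEdagger$ you have neither the rectangle decomposition nor, a priori, any maximizers to glue, so the ``quantitative lower bound on the $2^4-2$ cross terms'' you invoke is precisely the missing content, not a consequence of \eqref{triangle}. (2) The convexity-propagation claim --- that the superlevel sets of $K_i$ built from three convex sets are convex under generic nondegenerate $\ell_{j,i}$ --- is, as you note, not known and is false for general linear data; and your continuity-method alternative is circular in two places: closedness of the ``good'' set presupposes the existence/compactness theory you have not yet built, and the assertion that the good component exhausts the generic data is, as you say yourself, the conjecture. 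A smaller but real issue is your assumption that the bad data of Theorem~\ref{thm:nonexistence} form a closed nowhere dense set; nothing in the paper establishes this, and without it even the formulation of ``generic'' in your program is undetermined. The statement remains open.
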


Proposition~\ref{prop:nonexistence} demonstrates that the requirement that $\scriptl$
be generic cannot be entirely omitted. However, the construction on which the Proposition
is based requires structural properties not shared by generic $\scriptl$, and we
do not regard these examples as indicative of the state of affairs for generic data.

\begin{conjecture}
The $C^\infty$ boundary conclusion of Theorem~\ref{thm:bootstrap} 
need not hold, in general, for small perturbations $\scriptl$, satisfying \eqref{PSH},
of tuples $\scriptl^0$ that satisfy \eqref{RBLLsymmetry},
if the genericity hypothesis on $(\scriptl^0,\be)$ is omitted.
\end{conjecture}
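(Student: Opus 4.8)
The plan is to produce an explicit non-generic pair $(\scriptl^0,\be)$ and to show that some small perturbations $\scriptl$ satisfying \eqref{PSH} -- conjecturally a residual set of them -- have the feature that the symmetrized maximizer $\bE$ has a component whose boundary is only $C^{1,1}$, not $C^2$. The mechanism is the following: the function $K_i$ governing the Euler--Lagrange free boundary acquires, in a non-generic situation, a ridge across which it is $C^{1,1}$ but not $C^2$; in the fully symmetric non-generic case this ridge happens to coincide with $\partial E_i^0$, so $\partial E_i^0$ is nonetheless smooth; but under a generic small perturbation the ridge and the free boundary $\{K_i=\lambda_i\}$ separate and cross transversally, and at such a crossing the curvature of $\partial E_i$ jumps.

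Concretely, I would take $\scriptl^0$ to be the fully symmetric, nondegenerate tuple underlying the functional
\[
\int_{\reals^4}\one_{E_1}(x_1,y_1)\,\one_{E_2}(x_2,y_2)\,\one_{E_3}(x_1+x_2,y_1+y_2)\,\one_{E_4}(x_1-x_2,y_1-y_2)\,d\bz
\]
of the excerpt, together with data $\be$ for which $(\scriptl^0,\be)$ is strictly admissible but not generic (the unstable $r=\sqrt3$ configuration displayed there). The maximizer of $\Lambda_{\scriptl^0}$ is then a tuple of balls $\bE^0=(B,B,B_{\sqrt3},B)$, and with $i=1$ one has $K_1^0(w)=\big|\,B\cap(B_{\sqrt3}-w)\cap(B+w)\,\big|$, a function of $|w|$ alone; $\partial E_1^0$ is a circle, and the degeneracy locus -- the set of $w$ for which the two transversal intersection points of $\partial B$ and $\partial(B+w)$ lie on $\partial(B_{\sqrt3}-w)$ -- coincides with $\partial E_1^0$. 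An elementary computation of the area of the intersection of three near-circular convex regions near a transversal double crossing shows that, writing $t(w)$ for the signed distance from a crossing point to the third circle (positive when the corresponding vertex of the two-fold intersection is cut off), $K_1^0(w)=A(w)-c(w)\max(t(w),0)^2+O(t^3)$ plus a mirror term, with $A,c$ smooth and $c>0$; thus $K_1^0$ is $C^{1,1}$ but not $C^2$ across the degeneracy locus, its Hessian jumping by a definite amount there.

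Now perturb to $\scriptl$ satisfying \eqref{PSH} with $\norm{\scriptl-\scriptl^0}$ small. Theorem~\ref{thm:existence} and \eqref{symmetrize} provide a maximizer $\bE$ with $\bEdagger=\bE$, and a compactness argument based on Theorem~\ref{thm:precompact} together with the uniqueness of symmetrized maximizers of $\Lambda_{\scriptl^0}$ places $\bE$ near $\bE^0$; the generalized Euler--Lagrange relation of the introduction, being a consequence of maximality alone and hence independent of genericity, gives $\partial E_i\cap\mathcal N=\{w\in\mathcal N:K_i(w)=\lambda_i\}$ near $(1,0)$ with $\lambda_i>0$ and $\nabla K_i\neq 0$ there. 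The $K_i$ are again of the form \eqref{Kiintegral} with $\ell_{j,i}=\ell^0_{j,i}+O(\norm{\scriptl-\scriptl^0})$, so the same local area computation makes $K_i$ $C^{1,1}$ near $(1,0)$ with a ridge $\Gamma_\scriptl$, together with its horizontal-axis reflection $\overline{\Gamma}_\scriptl$, each a curve near the unit circle, across which the Hessian jumps. A perturbation that destroys the rotational symmetry of $\scriptl^0$ makes $\Gamma_\scriptl\neq\overline{\Gamma}_\scriptl$; then near the positive horizontal axis the ridge is a genuine transversal cross at a point $(u',0)$ with $u'\to1$, while $\partial E_i$, symmetric about both axes, meets the axis at $(u'',0)$ with $u''\to1$. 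An elementary planar argument -- using that both curves have near-vertical tangent at their axis crossings and that $\partial E_i$ is concave toward the origin -- shows that, provided $u''\neq u'$ and the cross is nondegenerate, $\partial E_i$ must cross one arm of the ridge transversally at a point lying off the other arm. At such a point the Hessian jump of $K_i$, fed through the level-set curvature formula, forces a jump in the curvature of $\partial E_i$; hence $\partial E_i$ is not $C^2$, and the conclusion of Theorem~\ref{thm:bootstrap} fails for this $\scriptl$.

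The one thing that must be secured, and which I expect to be the main obstacle, is that $u''\neq u'$ for some -- in fact an open dense set of -- small perturbations, i.e.\ that the free boundary $\partial E_i$ does not track the ridge of $K_i$ exactly. This is precisely where the machinery behind Theorem~\ref{thm:bootstrap} is unavailable: at the non-generic $\scriptl^0$ the linearization of the Euler--Lagrange system need not be invertible in the usual norms, so one cannot simply differentiate $\bE$ in $\scriptl$ by an implicit function theorem. The ridge position $u'$ is an explicit geometric function of $(\scriptl,\bE)$ whose first variation is computable; $u''$ requires the first variation, or at least one-sided control, of the maximizer. I would attack this either by establishing enough regularity of a selection $\scriptl\mapsto\bE^{(\scriptl)}$ at $\scriptl^0$ to evaluate $\tfrac{d}{d\epsilon}(u''_\epsilon-u'_\epsilon)|_{\epsilon=0}$ along a well-chosen anisotropic perturbation (for instance replacing $x_1+x_2$ by $(1+\epsilon)x_1+x_2$ in one map, which moves the ridge and the free boundary differently) and showing it is nonzero, or, absent a clean derivative, by a continuity argument: exhibit a one-parameter family $\scriptl_\epsilon$ for which the continuous function $\epsilon\mapsto u''_\epsilon-u'_\epsilon$ vanishes at $\epsilon=0$ but is not identically zero, so $u''\neq u'$ on a set of perturbations with nonempty interior. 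Ruling out this ``perfect tracking'' coincidence is the crux; the remaining ingredients are the local free-boundary analysis sketched above.
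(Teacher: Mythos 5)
You should first be aware that the statement you were asked to prove is not a theorem of the paper: it is stated as a \emph{Conjecture} and the authors offer no proof, only the heuristic remark (following Definition~\ref{genericity}) that the excluded non-generic configuration is ``unstable'' and that they ``expect this instability to lead to failure'' of $C^\infty$ regularity. Your proposal fleshes out exactly that heuristic --- the $r=\sqrt3$ configuration from the paper's own example, the $C^{1,1}$-but-not-$C^2$ ridge of $K_i$ along the degeneracy locus, and the separation of that ridge from the free boundary under perturbation --- so in spirit it is the argument the authors have in mind. The local ingredients are sound: the area of a triple intersection near a transversal double crossing does have a one-sided quadratic defect $-c\max(t,0)^2$, the rank-one Hessian jump across the ridge is normal to the ridge, and if the level set $\{K_i=\lambda_i\}$ crosses the ridge transversally then the tangential second derivative entering the level-set curvature formula jumps, so the curvature of $\partial E_i$ jumps.

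However, as you yourself identify, the argument is not a proof, and the gap is not a technicality. Everything reduces to showing that for some admissible small perturbation the symmetrized maximizer's boundary actually crosses the ridge transversally rather than tracking it (or avoiding it, e.g.\ if the perturbed configuration becomes generic of type (i) or (ii) along all of $\partial E_i$). You have no control over the first variation of the maximizer in $\scriptl$: Theorem~\ref{thm:precompact} gives only $L^1$ convergence of $\bE$ to $\bE^0$, and the bootstrapping machinery of \textsection\ref{perturb} is exactly what breaks down at a non-generic base point, so neither an implicit-function-theorem computation of $u''_\epsilon$ nor the ``continuity'' alternative is currently available --- the latter in particular presupposes that $\epsilon\mapsto u''_\epsilon$ is well defined and continuous, which would require a uniqueness-and-stability statement for symmetrized maximizers that the paper only conjectures (Conjecture 5.3). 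A second, smaller issue: since $\bE=\bE^\dagger$ forces $K_i(u_1,u_2)=K_i(\pm u_1,\pm u_2)$, the full degeneracy locus is automatically reflection-symmetric; your mechanism requires it to split into two branches swapped by the reflection and crossing on the axis away from $\partial E_i$, and that splitting is itself something to be established, not assumed. So the proposal is a credible program, consistent with the paper's intent, but it does not prove the conjecture, and the paper does not either.
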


\begin{question} 
To what extent are maximizers $\bE$ unique up to translation,
in the absence of partial symmetry, for generic $(\scriptl,\be)$
satisfying suitable nondegeneracy and admissibility hypotheses?
\end{question}

\section{Compatibility} \label{section:compatible}

\begin{definition}
For tuples of sets $\bE = (E_j: j\in\set4)$,
\begin{equation}\label{lambdadefn} \lambda(\bE) = \sup_{R}
\min_{j\in\set4} \frac{|E_j\cap R|}{|E_j|+|R|} \end{equation}
with the supremum taken over all 
rectangles $R\subset\reals^2$ 
centered at $0$ with sides parallel to the coordinate axes.
\end{definition}

If $E=E^\dagger$, and if $R$ is a rectangle centered at the origin with sides
parallel to the coordinate axes, then
\begin{equation} \label{ok}
|(R+y)\cap E|\le |R\cap E| \ \text{ for every $y\in\reals^2$.}
\end{equation}

\begin{lemma}[Compatibility] \label{lemma:rectangle2}
Let $K$ be a compact subset of $(\reals^+)^4$. 
For each  $\eps>0$ there exists $\delta>0$ with the following property.
Let $\be\in K$, and let
$\bE$ be a $4$--tuple of subsets of $\reals^2$ satisfying $|\bE|=\be$ and $\bE=\bEdagger$.
If $\lambda(\bE)<\delta$ then $\Lambda(\bE)<\eps$.
\end{lemma}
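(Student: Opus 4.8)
The plan is to prove the contrapositive in quantitative form: if $\Lambda(\bE)$ is not small, then $\lambda(\bE)$ is not small, uniformly for $\be$ in the compact set $K$. The starting point is the nondegeneracy bound $\Lambda(\bE)\le C|E_i|\,|E_j|$ for $i\ne j$, together with the lower bound $\min_i e_i \ge c(K)>0$ which holds on $K$ by the same reasoning used in the proof of Proposition~3.6 (the superlevel-set/compactness argument there shows $\Theta(\be)=t$ forces the components to be bounded below). So it suffices to show: there is a rectangle $R$ centered at the origin, with sides parallel to the axes, whose relative overlap $|E_j\cap R|/(|E_j|+|R|)$ with every $E_j$ is bounded below by a function of $\Lambda(\bE)$, $\be$, and $\scriptl$ alone.

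First I would exploit the representation $\Lambda(\bE)=\int K_i\,\one_{E_i}$ with $K_i$ given by the integral \eqref{Kiintegral}–\eqref{Kiintegralform}, whose linear maps $\ell_{j,i}$ are themselves of the partially split form and are nondegenerate. Because $\bE=\bEdagger$, each $E_j$ is symmetric under both coordinate reflections, so each $E_j$ contains a large fraction of its mass near the coordinate axes in a controlled way; more precisely, the distribution function of the vertical slices $x\mapsto |\{t: (x,t)\in E_j\}|$ is nonincreasing in $|x|$, and similarly horizontally. The key geometric observation, analogous to \eqref{ok}, is that for a $\dagger$-symmetric set the mass captured by the centered rectangle $R_{a,b}=[-a,a]\times[-b,b]$ dominates the mass captured by any translate of it. I would then choose $a_j$ so that the vertical strip $|x_1|\le a_j$ carries, say, at least half of $|E_j|$, and similarly $b_j$ for the horizontal strip; by the monotonicity of slices these $a_j,b_j$ are bounded above by $C(K)$ and the rectangle $R_j=[-a_j,a_j]\times[-b_j,b_j]$ satisfies $|E_j\cap R_j|\ge\tfrac14|E_j|$ with $|R_j|\le C(K)$. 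Taking $R$ to be the smallest centered axis-parallel rectangle containing all four $R_j$ then gives a single rectangle with $|R|\le C(K)$ and $|E_j\cap R|\ge\tfrac14|E_j|$ for every $j$; hence $\lambda(\bE)\ge c(K)>0$, which is the desired lower bound independent of $\Lambda(\bE)$ — in fact it shows $\lambda(\bE)$ is bounded below on all $\dagger$-symmetric tuples with $|\bE|\in K$, so the hypothesis $\lambda(\bE)<\delta$ simply cannot occur for small enough $\delta$, making the conclusion vacuously available unless $\Lambda(\bE)=0$.

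That last remark signals that the statement is really about the degenerate/unbounded regime, and the argument just sketched is too crude: a $\dagger$-symmetric set of bounded measure can still be spread along an axis out to distance comparable to its measure, so ``half the mass in a strip of width $C(K)$'' fails. The honest approach, and the step I expect to be the main obstacle, is to show that if $\Lambda(\bE)\ge\eps$ then the mass of each $E_j$ is genuinely concentrated: one uses the integral for $K_i$ and the nondegeneracy of the $\{\ell_{j,i}\}$ to argue that $K_i(u)$ is bounded above by a constant times the measure of an intersection of four translated copies of the other three sets, and that for this to have total integral $\ge\eps$ over $E_i$, there must be a single centered rectangle $R$ meeting all four sets in a fixed fraction of their measure with $|R|$ controlled. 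Concretely I would fix the index $i$ with $|E_i|$ largest, write $\eps\le\Lambda(\bE)=\int K_i\one_{E_i}\le \|K_i\|_\infty|E_i|$, note $\|K_i\|_\infty\le C\min_{j\ne i}|E_j|$, and then run a pigeonhole over dyadic centered rectangles: the set of scales contributing to the integral must include one at which all four sets simultaneously place a definite proportion of their mass inside a translate of a centered rectangle, and $\dagger$-symmetry (via \eqref{ok}) upgrades ``a translate'' to ``the centered rectangle itself.'' Iterating or summing a geometric series in the dyadic scales yields the clean bound $\lambda(\bE)\ge \delta(\eps,K,\scriptl)$, which is the contrapositive of the Lemma. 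The delicate point is making the pigeonhole produce one rectangle working for all four indices at once rather than four different rectangles, and here the partial-symmetry product structure \eqref{Kiintegralform} — which lets one run the argument separately in the $x$ and $y$ variables — is exactly what makes it go through.
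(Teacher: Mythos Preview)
Your proposal has a genuine gap. You correctly diagnose that your first attempt fails because a $\dagger$--symmetric set of fixed measure can be arbitrarily elongated along either axis, so no single centered rectangle of bounded measure need capture a fixed fraction of each $E_j$. But your second attempt never recovers from this: the phrase ``run a pigeonhole over dyadic centered rectangles'' is not a proof. You do not say what the dyadic family is, nor how the pigeonhole forces a \emph{single} rectangle to work for all four sets simultaneously. The difficulty is real: the sets $E_j$ can live at four unrelated aspect ratios, and the contribution to $\Lambda(\bE)$ comes from a convolution-like interaction that is not localized to any one scale.

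The paper's argument is substantially more structured. Each $E_j^+$ (the right half-plane portion) is decomposed not into centered dyadic rectangles but into rectangles $R_k^{(j)}$ indexed by the vertical scale: $R_k^{(j)}$ has vertical side $2^{k+1}$ and horizontal side equal to the interval where the nonincreasing boundary function $f_j$ lies in $(2^{k-1},2^k]$. Two sublemmas then control $\Lambda(R_{k_1}^{(1)},\dots,R_{k_4}^{(4)})$ and its sum over $\bk\in\integers^4$: one gives a pointwise bound with a factor $2^{-|\max_i k_i-\min_j k_j|}$, the other sums this. A parameter $\eta$ is introduced, and the sum over $\bk$ is split into three regions: those where $\max_j|R_{k_j}^{(j)}|\le\eta$, those where the vertical scales $k_j$ differ by more than $\log_2(1/\eta)$, and the remainder. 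The first two regions contribute $O(\eta^{1/2})$ by the sublemmas. In the remaining region the four rectangles have comparable measures \emph{and} comparable vertical sides, hence comparable horizontal sides; only then does a single rectangle $R$ appear that captures all four, and the hypothesis $\lambda(\bE)<\delta$ (via \eqref{ok}) forces $|R|\lesssim C(\eta)\delta$, which for $\eta(\delta)\to 0$ slowly enough puts $\bk$ back into the first region, a contradiction. The decomposition by the level sets of the monotone boundary function and the separation into ``incomparable scales'' versus ``comparable scales'' are the ideas your sketch is missing.
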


\begin{sublemma} \label{lemma:simplest4bound}
For $j\in\scripti$
let $R_j=I_j\times I'_j\subset\reals^2$ be a rectangle with sides parallel to
the axes. 
Then for any permutation $(i,j,k,l)$ of $(1,2,3,4)$,
\begin{equation} \label{eq:simplest4bound}
\Lambda(\one_{R_j}: j\in\set4)
\le C |I'_k|\cdot|I'_l|\cdot|I'_i|^{-1}|I'_j|^{-1} |R_i|\cdot|R_j|.
\end{equation}
\end{sublemma}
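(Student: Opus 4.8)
The plan is to compute $\Lambda(\one_{R_j}: j\in\set4)$ essentially exactly, exploiting the product structure \eqref{PSH}. Recall that each $L_i$ splits as $L_i(\bx,\by) = (L_i^1(\bx), L_i^2(\by))$, so the integrand $\prod_{i}\one_{R_i}(L_i(\bx,\by))$ factors as a product of a function of $\bx=(x_1,x_2)$ alone and a function of $\by=(y_1,y_2)$ alone. Writing $R_i = I_i\times I'_i$, we get
\begin{equation*}
\Lambda(\one_{R_j}: j\in\set4) = \Big(\int_{\reals^2}\prod_{i=1}^4 \one_{I_i}(L_i^1(\bx))\,d\bx\Big)\cdot\Big(\int_{\reals^2}\prod_{i=1}^4 \one_{I'_i}(L_i^2(\by))\,d\by\Big).
\end{equation*}
So it suffices to bound each factor, and by symmetry of the statement in the two groups of variables, the factor involving the $I_i$ and the factor involving the $I'_i$ are bounded by the same recipe. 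The claimed inequality isolates the permutation $(i,j,k,l)$ and uses bounds of a slightly different shape on the two factors: the $\bx$–factor is bounded by $C|I_i|\,|I_j|$ (which together with $|R_i|=|I_i||I'_i|$, $|R_j|=|I_j||I'_j|$ accounts for the $|R_i||R_j|$ on the right once the $|I'_i|^{-1}|I'_j|^{-1}$ is used to cancel the $I'$ contributions in those two $|R|$'s), and the $\by$–factor is bounded by $C|I'_k|\,|I'_l|$.

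First I would prove the single–group claim: if $M_1,\dots,M_4:\reals^2\to\reals^1$ are surjective linear maps such that for every pair of distinct indices $a,b$ the map $\bx\mapsto(M_a(\bx),M_b(\bx))$ is a linear isomorphism of $\reals^2$ (this is exactly the nondegeneracy hypothesis, Definition~\ref{newnondegeneracy}, applied to $L_i^1$ and to $L_i^2$ separately), and $I_1,\dots,I_4$ are intervals, then for any two chosen indices $a,b$,
\begin{equation*}
\int_{\reals^2}\prod_{i=1}^4\one_{I_i}(M_i(\bx))\,d\bx \le C\,|I_a|\cdot|I_b|,
\end{equation*}
with $C$ depending only on the linear maps. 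This is immediate: change variables via the isomorphism $\bx\mapsto u=(M_a(\bx),M_b(\bx))$, whose Jacobian is a nonzero constant, so the integral is at most $C\int_{\reals^2}\one_{I_a}(u_1)\one_{I_b}(u_2)\,du = C|I_a||I_b|$ after simply discarding the two remaining indicator factors (they are $\le 1$). Applying this to $(M_i)=(L_i^1)$ with chosen indices $(i,j)$ bounds the $\bx$–factor by $C|I_i|\,|I_j|$, and applying it to $(M_i)=(L_i^2)$ with chosen indices $(k,l)$ bounds the $\by$–factor by $C|I'_k|\,|I'_l|$. Multiplying,
\begin{equation*}
\Lambda(\one_{R_j}: j\in\set4) \le C\,|I_i|\,|I_j|\cdot|I'_k|\,|I'_l| = C\,|I'_k|\,|I'_l|\,|I'_i|^{-1}|I'_j|^{-1}\,(|I_i||I'_i|)(|I_j||I'_j|),
\end{equation*}
which is exactly \eqref{eq:simplest4bound} since $|R_i|=|I_i||I'_i|$ and $|R_j|=|I_j||I'_j|$.

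There is no real obstacle here; the only point requiring a modicum of care is bookkeeping, namely tracking which pair of indices is used in which of the two factors so that the $|I'|$ powers come out with the stated signs, and confirming that the constant $C$ depends only on $\scriptl$ (it is a product of two reciprocal Jacobian determinants, both fixed by $\scriptl$ once the pair of indices is fixed, and there are only finitely many index pairs). The mild subtlety worth flagging is that the bound is not symmetric in the four indices even though the hypothesis is; the asymmetry is deliberately built into the right-hand side because this sublemma will be fed into the proof of Lemma~\ref{lemma:rectangle2}, where one wants a bound that degenerates in a controlled way as the rectangles $R_j$ become long and thin in the horizontal direction, i.e. as $|I'_j|\to 0$ with $|I_j|\to\infty$, which is precisely the behavior the factors $|I'_i|^{-1}|I'_j|^{-1}$ track.
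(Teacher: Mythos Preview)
Your proof is correct and follows essentially the same approach as the paper's: both drop two of the eight indicator constraints (keeping $L_i^1(\bx)\in I_i$, $L_j^1(\bx)\in I_j$, $L_k^2(\by)\in I'_k$, $L_l^2(\by)\in I'_l$), invoke nondegeneracy to make the linear changes of variables $\bx\mapsto(L_i^1(\bx),L_j^1(\bx))$ and $\by\mapsto(L_k^2(\by),L_l^2(\by))$, and arrive at $C|I_i||I_j||I'_k||I'_l|$. Your version is a bit more explicit in writing out the factorization and the final algebraic rewrite, but the argument is the same.
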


\begin{proof}
$\Lambda(\one_{R_j}: j\in\set4)$
is majorized by the Lebesgue measure of the set of all
$(\bx,\by) = (x_1,x_2,y_1,y_2)\in\reals^4$ for which
$L_i^1(\bx)\in I_i$,
$L_j^1(\bx)\in I_j$,
$L_k^2(\by)\in I'_k$,
and
$L_l^2(\by)\in I'_l$.
The mappings $\bx\mapsto (L_i^1(\bx),L_j^1(\bx))$
and
$\by\mapsto (L_k^2(\by),L_l^2(\by))$
are bijective linear transformations from $\reals^2$
to $\reals^2$.
Thus
$\Lambda(\one_{R_j}: j\in\set4)$
is bounded by a constant, which depends only on $(L_n: n\in\set4)$,
multiplied by $|I_i|\cdot|I_j|\cdot |I'_k|\cdot |I'_l|$. 
\end{proof}
An analogous conclusion holds if the roles of $I_k$ and $I_k'$ are reversed.

In the following discussion, $\bk = (k_1,k_2,k_3,k_4)\in\integers^4$.
\begin{sublemma} \label{lemma:Ssum}
For each $j\in\set4$ let $\{R_{k}^{(j)}: k\in\integers\}$
be a family of rectangles  in $\reals^2$
of the form $R_k^{(j)} = I_k^{(j)}\times J_k^{(j)}$
with $J_k^{(j)}$ of length $2^k$.
Suppose that 
$\underset{k\in\integers}\sum |R_k^{(j)}|<\infty$ for each $j\in\set4$.
There exists $C<\infty$ such that
for any set $S\subset\integers^4$,
\begin{multline*}
\sum_{\bk\in S} \Lambda(R_{k_1}^{(1)}, R_{k_2}^{(2)},R_{k_3}^{(3)},R_{k_4}^{(4)})
\le C 
\sup_{\bk\in S}\,\, 
2^{-(\underset{i}\max \,k_i - \underset{j}\min\, k_j)/2}\cdot \sup_{\bk\in S} \max_{n\in\set4} |R_{k_n}^{(n)}|
\cdot
\max_{j\in\set4} \sum_{k\in\integers} |R_k^{(j)}|.
\end{multline*}
\end{sublemma}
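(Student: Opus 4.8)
The strategy is to reduce the four-fold sum to a single geometric sum in the "spread" $\max_i k_i - \min_j k_j$ by first using Sublemma~\ref{lemma:simplest4bound} to control each individual term, and then summing carefully in the remaining indices. Fix $\bk \in S$ and let $(i,j,k,l)$ be a permutation of $(1,2,3,4)$ chosen so that $k_i \le k_j \le k_k \le k_l$, i.e.\ $i$ indexes the smallest vertical side-length and $l$ the largest. By Sublemma~\ref{lemma:simplest4bound} (and the remark following it, allowing either factor of a given rectangle to play the role of $I'$),
\[
\Lambda(R_{k_1}^{(1)},\dots,R_{k_4}^{(4)}) \le C\,|J_{k_k}^{(k)}|\cdot|J_{k_l}^{(l)}|\cdot|J_{k_i}^{(i)}|^{-1}|J_{k_j}^{(j)}|^{-1}\,|R_{k_i}^{(i)}|\cdot|R_{k_j}^{(j)}|.
\]
Since $|J_k^{(n)}| = 2^k$, the product of vertical side-lengths collapses: $|J_{k_k}^{(k)}||J_{k_l}^{(l)}||J_{k_i}^{(i)}|^{-1}|J_{k_j}^{(j)}|^{-1} = 2^{k_k + k_l - k_i - k_j}$. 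Writing $m = \min_n k_n = k_i$ and $M = \max_n k_n = k_l$, and using $k_i \le k_j$ and $k_k \le k_l$, one gets $k_k + k_l - k_i - k_j \le (k_l - k_i) = M - m$ on the one hand, but this is too lossy; instead observe $k_k + k_l - k_i - k_j \le 2(M-m)$ trivially, and more usefully that we can also bound $2^{k_k+k_l-k_i-k_j}$ against $2^{-(M-m)}$ times a factor absorbed into the $|R|$'s — the cleanest route is to split the bound asymmetrically, keeping one factor $2^{-(M-m)/2}$ explicit and absorbing the rest into $\sup_{\bk} \max_n |R_{k_n}^{(n)}|$ and into the two kept rectangle areas.

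Concretely, I would argue as follows. From the displayed bound,
\[
\Lambda(R_{k_1}^{(1)},\dots,R_{k_4}^{(4)}) \le C\, 2^{-(M-m)/2}\cdot \Big(\sup_{\bk'\in S}\max_{n}|R_{k'_n}^{(n)}|\Big)\cdot |R_{k_i}^{(i)}|\cdot 2^{k_j},
\]
where the extra powers of $2$ beyond $2^{-(M-m)/2}$, together with one of the two kept area factors, have been dominated by $2^{-(M-m)/2}\cdot\sup\max_n|R_{k'_n}^{(n)}|$ — this step uses only $|J^{(n)}_{k_n}| = 2^{k_n}$ and elementary inequalities among $m, k_j, k_k, M$, and it is where one must be slightly careful to avoid losing the $2^{-(M-m)/2}$ factor. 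Now sum over $\bk \in S$. Pull the supremum of $2^{-(M-m)/2}$ out front (legitimate since it only increases the sum), leaving
\[
\sum_{\bk \in S}\Lambda(\cdots) \le C\,\sup_{\bk\in S}2^{-(M-m)/2}\cdot\sup_{\bk\in S}\max_n|R_{k_n}^{(n)}|\cdot \sum_{\bk\in S}|R_{k_i}^{(i)}|\,2^{k_j}.
\]
For the remaining sum, enlarge $S$ to all of $\integers^4$ and bound $\sum_{\bk}|R_{k_i}^{(i)}|2^{k_j}$ — but $2^{k_j}$ is not summable over $k_j \in \integers$. This is the crux: one must instead \emph{not} extract $2^{k_j}$ but keep it paired with something. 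The correct bookkeeping is to keep \emph{both} original area factors $|R_{k_i}^{(i)}|\cdot|R_{k_j}^{(j)}|$ and dominate $2^{k_k + k_l - k_i - k_j}$ entirely by $2^{-(M-m)/2}$ times a constant, using that $k_k, k_l \le M$ and $k_i, k_j \ge m$ force $k_k+k_l-k_i-k_j \le 2(M-m)$ — no, that gives $2^{+2(M-m)}$, the wrong sign.

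The resolution, and the \textbf{main obstacle}, is that the exponent $k_k + k_l - k_i - k_j$ is genuinely bounded above by $M-m$ only when one is careful about which indices are which: with $k_i \le k_j \le k_k \le k_l$ we have $k_k + k_l - k_i - k_j = (k_l - k_i) - (k_j - k_k) \le (k_l - k_i) = M - m$ since $k_j \le k_k$. So $2^{k_k+k_l-k_i-k_j} \le 2^{M-m}$. That is still the wrong sign for a convergent geometric series, so one cannot afford to use Sublemma~\ref{lemma:simplest4bound} with the two smallest-index rectangles kept as areas; instead apply it with the role of $I'$ reversed on the \emph{two largest}-index rectangles, giving the factor $|J_{k_i}|\cdot|J_{k_j}|\cdot|J_{k_k}|^{-1}|J_{k_l}|^{-1} = 2^{k_i+k_j-k_k-k_l} \le 2^{-(M-m)}$, with the kept area factors being $|R_{k_k}^{(k)}|\cdot|R_{k_l}^{(l)}|$. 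Then $2^{-(M-m)} \le 2^{-(M-m)/2}\cdot 2^{-(M-m)/2}$; extract one $\sup 2^{-(M-m)/2}$ out front, use the other $2^{-(M-m)/2} \le 1$ absorbed into the constant, bound $|R_{k_l}^{(l)}| \le \sup_{\bk}\max_n |R_{k_n}^{(n)}|$ (pulled out front), and finally
\[
\sum_{\bk\in S}|R_{k_k}^{(k)}| \le \sum_{\bk\in \integers^4}|R_{k_k}^{(k)}|,
\]
which, after summing over the three free coordinates $k_i, k_j, k_l$ — wait, $|R_{k_k}^{(k)}|$ depends only on $k_k$, so summing over the other three coordinates is infinite. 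Hence one must keep \emph{both} kept area factors summed, i.e.\ bound $\sum_{\bk}|R^{(k)}_{k_k}||R^{(l)}_{k_l}|$ by $\big(\sum_{a}|R^{(k)}_a|\big)\big(\sum_{b}|R^{(l)}_b|\big)$ after freely summing the two remaining coordinates against a geometric weight supplied by splitting $2^{-(M-m)}$ into $2^{-(M-m)/2}$ (kept as the stated $\sup$) times $2^{-(k_k - k_i)/4}2^{-(k_k-k_j)/4}\cdots$ — a partition-of-unity-style splitting of the full gap $M-m$ into the pairwise gaps, which are summable. The honest statement of the obstacle: one needs to distribute the single factor $2^{-(M-m)}$ so as to (a) leave $\sup 2^{-(M-m)/2}$ visible and (b) have enough decay left to sum the two "free" indices, and this requires observing $M - m \ge \frac12(|k_k - k_i| + |k_l - k_j| + \cdots)$ up to constants so that the residual $2^{-c(M-m)}$ dominates a product of summable geometric factors in the differences. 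Once this combinatorial splitting is set up, the remaining sums over the differences are convergent geometric series and the two area sums factor as $\max_j \sum_k |R_k^{(j)}|$ (squared, dominated by the single max of the product, or kept as the product and then bounded by $(\max_j\sum_k|R_k^{(j)}|)^2$ — the statement as written has a single such factor, so in fact one of the two area sums should be absorbed into $\sup_{\bk}\max_n |R_{k_n}^{(n)}|$, which is consistent). Assembling the three harvested factors — $\sup_{\bk\in S}2^{-(M-m)/2}$, $\sup_{\bk\in S}\max_n|R_{k_n}^{(n)}|$, and $\max_{j}\sum_k|R_k^{(j)}|$ — yields the claimed inequality.
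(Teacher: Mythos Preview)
Your proposal is not a proof; it is an exploration that eventually orients itself correctly but never carries out the summation. You do ultimately land on the right starting point: apply Sublemma~\ref{lemma:simplest4bound} keeping as area factors the two rectangles with the \emph{largest} vertical sides, so that (in your ordering $k_i\le k_j\le k_k\le k_l$) the resulting power of $2$ is $2^{k_i+k_j-k_k-k_l}\le 1$. From that point on the argument collapses, for two reasons.

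First, you split $2^{k_i+k_j-k_k-k_l}\le 2^{-(M-m)}=2^{-(M-m)/2}\cdot 2^{-(M-m)/2}$, extract one factor as the stated supremum, and declare the other ``$\le 1$, absorbed into the constant.'' That throws away precisely the decay needed to sum the free indices $k_i,k_j$, and indeed two paragraphs later you find those sums divergent. The correct bookkeeping is sharper than a bound in terms of $M-m$ alone: set $\rho=\sup_{S}2^{(k_i+k_j-k_k-k_l)/2}$ (which is $\le\sup_S 2^{-(M-m)/2}$ since $k_j-k_k\le 0$), write
\[
2^{k_i+k_j-k_k-k_l}\le \rho\cdot 2^{(k_i+k_j-k_k-k_l)/2}
=\rho\cdot 2^{-(k_l-k_k)/2}\cdot 2^{(k_i-k_k)/2}\cdot 2^{(k_j-k_k)/2},
\]
and observe that the last two factors give convergent geometric sums over $k_i,k_j\le k_k$. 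This is the ``partition-of-unity-style splitting'' you gesture at but never write down; without it nothing downstream can work.

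Second, after summing out $k_i,k_j$ one is left with
\[
C\rho\sum_{k_k\le k_l}2^{-(k_l-k_k)/2}\,|R^{(k)}_{k_k}|\,|R^{(l)}_{k_l}|,
\]
and the task is to bound this by $\big(\sup_{\bk}\max_n|R^{(n)}_{k_n}|\big)\cdot\big(\max_n\sum_m|R^{(n)}_m|\big)$ rather than by $\big(\max_n\sum_m|R^{(n)}_m|\big)^2$. Your closing sentence (``one of the two area sums should be absorbed into $\sup\max|R|$'') asserts this without a mechanism. The paper bounds the double sum by Cauchy--Schwarz/Young as $C\big(\sum_m|R^{(k)}_m|^2\big)^{1/2}\big(\sum_m|R^{(l)}_m|^2\big)^{1/2}$ and then uses the elementary inequality $\sum_m a_m^2\le(\sup_m a_m)\cdot\sum_m a_m$. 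An even more direct route---bound $|R^{(l)}_{k_l}|\le\sup\max|R|$, then sum the geometric tail $\sum_{k_l\ge k_k}2^{-(k_l-k_k)/2}=O(1)$, then sum $|R^{(k)}_{k_k}|$ over $k_k$---also works. Either step is the missing idea; neither appears in your write-up.
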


\begin{proof}
By dilating we may assume without loss of generality that
\[\sup_{j\in\set4} \sum_{k\in\integers} |R_k^{(j)}|=1.\]
It suffices to treat the summation over all $\bk\in S$ that satisfy
\begin{equation} \label{eq:ordering}
k_4\le k_3\le k_2\le k_1.
\end{equation}
The same reasoning will apply with arbitrary permutations of the indices $1,2,3,4$.
For the rest of the proof, we assume that every $\bk\in S$ satisfies
\eqref{eq:ordering}.
Set $\rho = 
\underset{\bk\in S}\sup \,\,2^{(k_4+k_3-k_2-k_1)/2}$.

Individual summands satisfy
\begin{align*} 
\Lambda(R_{k_1}^{(1)},R_{k_2}^{(2)},R_{k_3}^{(3)},R_{k_4}^{(4)})
&\le C 2^{k_3+ k_4-k_1-k_2}
|R_{k_1}^{(1)}|\cdot
|R_{k_2}^{(2)}| 
\\&
\le C\rho 
2^{-(k_1-k_2)/2}
2^{(k_3+ k_4-2k_2)/2}
|R_{k_1}^{(1)}|\cdot
|R_{k_2}^{(2)}|.
\end{align*}
Summation over all $k_3,k_4\le k_2$  yields an upper bound
\[
\rho \sum_{k_2\le k_1} 2^{-(k_1-k_2)/2} 
|R_{k_1}^{(1)}|\cdot
|R_{k_2}^{(2)}| 
\le C\rho\big(\sum_{k_1} |R_{k_1}^{(1)}|^2\big)^{1/2}
\cdot \big(\sum_{k_2} |R_{k_2}^{(2)}|^2\big)^{1/2}
\]
with the first sum taken over all $(k_1,k_2)$ satisfying $k_2\le k_1$
for which there exist $k_3,k_4$ for which $\bk\in S$, 
the second sum over all $k_1$ for which
there exist $k_2,k_3,k_4$ for which $\bk\in S$, 
and so on.
Now 
\[ \big(\sum_{k_1} |R_{k_1}^{(1)}|^2\big)^{1/2}
\le \sup_{k} |R_k^{(1)}|^{1/2}
\cdot \big(\sum_{k_1} |R_{k_1}^{(1)}|\big)^{1/2}
\le \sup_{k} |R_k^{(1)}|^{1/2},\]
with a corresponding majorization for $\big(\sum_{k_2} |R_{k_2}^{(2)}|^2\big)^{1/2}$.
\end{proof}

\begin{proof}[Proof of Lemma \ref{lemma:rectangle2}] 
Define $E_j^+=\{(x,y)\in E_j: x>0\}$
and
$E_j^-=\{(x,y)\in E_j: x<0\}$.
We will analyze $\Lambda(E_j^+: j\in\set4)$;
the same reasoning will apply equally well to $\Lambda(E_j^\pm: j\in\set4)$
with all possible choices of $\pm$ signs.

To $E_j$ associate  rectangles $R_k^{(j)}\subset\reals^2$
with sides parallel to the coordinate axes, defined as follows:
Express $E_j^+$,
up to a Lebesgue null set,
as 
\[ E_j^+=\{(x,y): x>0 \text{ and }  |y|<f_j(x)\}\] where $f_j:(0,\infty)\to[0,\infty)$  
is nonincreasing and right continuous.
Define 
\[ R_k^{(j)}=\{x\in\reals^+:  2^k\ge f_j(x) > 2^{k-1}\} \times[-2^k,2^k].\]

Then $E_j^+\subset \cup_{k=-\infty}^\infty R_k^{(j)}$,
so $|E_j|\le 2\underset{k}\sum |R_k^{(j)}|$.
On the other hand,
$|R_k^{(j)}\cap E_j|\ge \tfrac12 |R_k^{(j)}|$, so
\[\sum_{k\in\integers} |R_k^{(j)}| \le 2|E_j^+| =|E_j|.\]

Express
\[ \Lambda(E_j^+: j\in\set4)
= \sum_{\bk\in\integers^4} \Lambda(E_j^+\cap R_{k_j}^{(j)}: j\in\set4)
\le \sum_{\bk\in\integers^4} \Lambda(R_{k_j}^{(j)}: j\in\set4).  \]
Let $\eta = \eta(\delta)>0$ be a small parameter which will be chosen below to depend only on 
$\delta$, and will tend to zero as $\delta\to 0$.
Introduce
\begin{align*}
S_1  &= \{\bk\in S:  \max_{j\in\set4} |R_{k_j}^{(j)}|\le\eta\}
\\
S_2 &= \{\bk\in S\setminus S_1: \max_{i\in\set4} k_i-\min_{j\in\set4} k_j > \log_2(1/\eta)\}
\\
S' &= S\setminus (S_1\cup S_2).
\end{align*}
By Sublemma~\ref{lemma:Ssum},
\[\Lambda(E_j^+: j\in\set4)
\le C\eta^{1/2} + C\sum_{\bk\in S'} \Lambda(R_{k_j}^{(j)}: j\in\set4). \]
Matters are thus reduced to the sum over $\bk\in S'$.

As above, by partitioning $S'$ into finitely many subsets,
we may assume for the remainder of the proof that
$|R_{k_1}^{(1)}| \ge |R_{k_2}^{(2)}| \ge |R_{k_3}^{(3)}| \ge |R_{k_4}^{(4)}|$ for each $\bk\in S'$.
Since each $R_{k_j}^{(j)}$ 
is a rectangle with sides parallel to the coordinate axes
with vertical side of length $2^{k_j}$,
and since $\underset{i,j\in\set4}\max\, 2^{k_i}/2^{k_j}\le \eta^{-1}$,
\eqref{eq:simplest4bound} gives
\begin{align*}  
\Lambda(R_{k_j}^{(j)}: j\in\set4)
&\le C 2^{k_1}2^{k_3}2^{-k_4}2^{-k_2}|R_{k_4}^{(4)}|
\cdot |R_{k_2}^{(2)}|\le C\eta^{-2} \frac{|R_{k_4}^{(4)}|}{ |R_{k_1}^{(1)}|}
\cdot |R_{k_1}^{(1)}| 
\cdot |R_{k_2}^{(2)}|
\end{align*}
Since $\bk\not\in S_2$, $k_3\le k_2+\log_2(1/\eta)$, and summing over such $k_3$ yields the bound
\[ C\eta^{-1} 2^{k_1}2^{-k_4}|R_{k_4}^{(4)}|
\cdot |R_{k_2}^{(2)}| .\]
Summing over $k_2$ gives 
\[ C\eta^{-1}2^{k_1}2^{-k_4}|R_{k_4}^{(4)}|. \]
Again since $\bk\not\in S_2$, $-k_4\le \log_2(1/\eta)-k_1$. Summing over all remaining indices which in addition satisfy  $\underset{i\in\set4}\min |R_{k_i}^{(i)}| \le \eta^3 \underset{j\in\set4}\max |R_{k_j}^{(j)}|$
gives the bound
\begin{align*}
C\eta^{-1}\sum_{k_1}&\sum_{2^{k_1-k_4}\le 1/\eta} 2^{k_1-k_4}\left(|R_{k_4}^{(4)}|/|R_{k_1}^{(1)}|\right)|R_{k_1}^{(1)}| \\
&\le C\eta^{2}\sum_{k_1}\sum_{2^{k_1-k_4}\le 1/\eta} 2^{k_1-k_4}|R_{k_1}^{(1)}|\\ 
&\le C\eta\sum_{k_1}|R_{k_1}^{(1)}|=C\eta .
\end{align*} 
Therefore it remains to consider indices $\bk\in S'$ which satisfy $\underset{i\in\set4}\min |R_{k_i}^{(i)}|> \eta^3 \underset{j\in\set4}\max |R_{k_j}^{(j)}|$.

Assume that $\lambda(\bE)<\delta$, with $\delta>0$ small.
Let $\tilde S$ be the set of all $\bk\in S$ that remain untreated,
and for which $\Lambda(R_{k_j}^{(j)}: j\in\set4)\ne 0$.
To complete the proof, it suffices to show that
if $\eta(\delta)\to 0$ sufficiently
slowly as $\delta\to 0$, then the hypothesis that $\lambda(\bE)<\delta$
forces $\tilde S$ to be empty. 

Consider any $\bk\in\tilde S$.
The associated four rectangles $R_{k_j}^{(j)}$ have sides parallel
to the coordinate axes, have vertical sides of comparable lengths ---
meaning that the ratios of any two of these lengths are bounded above
by a function of $\eta$ alone --- and have comparable Lebesgue measures,
in the same sense of comparability.
Therefore the lengths of  their horizontal sides are likewise comparable.
Therefore there exists a single rectangle $R\subset\reals^2$,
with sides parallel to the coordinate axes, such that $R_{k_j}^{(j)}$ is
contained in a translate $R+\bv_j$ and has Lebesgue measure comparable to $|R|$,
for each $j\in\set4$.
Since $|R_{k_j}^{(j)}\cap E_j|\ge \tfrac12|R_{k_j}^{(j)}|$,
and since $|E_j|$ is comparable to $1$,
the ratio $\frac{|E_j\cap R_{k_j}^{(j)}|}{|E_j| + |R_{k_j}^{(j)}|}$
is comparable to $|R_{k_j}^{(j)}|$. 
Therefore we find that for each $j\in\set4$,
\[ |R|\le C(\eta) |R_{k_j}^{(j)}|\le C(\eta)\frac{|E_j\cap R_{k_j}^{(j)}|}{|E_j|+|R_{k_j}^{(j)}|}\le C(\eta)\frac{|E_j\cap R|}{|E_j|+|R|} ,\]
where $C(\eta)<\infty$ depends only on $\eta$ and we used \eqref{ok} in the last inequality. By the definition of $\lambda(\bE)$, we have
\[ |R|\le C(\eta) \lambda(\bE)\le C(\eta)\delta .\]
Therefore if 
$C(\eta)\cdot\delta <\eta$ then we conclude that $\bk\in S_1$,
whence $\bk\notin\tilde S$. Thus $\tilde S$ would be empty.

For each $\eta>0$, the inequality $C(\eta)\delta<\eta$
holds for all sufficiently small $\delta>0$.
Therefore there exists a function $\delta\mapsto\eta(\delta)$
satisfying both 
$\lim_{\delta\to 0} \eta(\delta)=0$,
and  
$C(\eta(\delta))\cdot\delta <\eta(\delta)$ for every $\delta>0$.
\end{proof}

\begin{corollary} \label{cor:centralsquare}
For each compact set $K\subset (\reals^+)^4$
and each $\delta>0$ 
there exists $\rho>0$ with the following property.
If $|\bE|=\be\in K$,
if $\bE = \bEdagger$,
and if $\Lambda(\bE)\ge\delta$
then there exists $t\in\reals^+$
such that for each $j\in\set4$, 
\begin{equation} [-\rho,\rho]\times[-\rho,\rho] \subset D_t(E_j).\end{equation}
\end{corollary}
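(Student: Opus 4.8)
The plan is to derive the corollary from Lemma~\ref{lemma:rectangle2} together with the rigidity of sets fixed by $\dagger$. First I would invoke the contrapositive of Lemma~\ref{lemma:rectangle2}, applied with the given $\delta$ playing the role of $\eps$: there is $\delta_0 = \delta_0(K,\delta) > 0$ such that if $|\bE| = \be \in K$, $\bE = \bEdagger$ and $\Lambda(\bE) \ge \delta$, then $\lambda(\bE) \ge \delta_0$. By the definition of $\lambda(\bE)$ as a supremum over rectangles centered at the origin with sides parallel to the coordinate axes, there is one such rectangle $R = [-a,a]\times[-b,b]$ for which
\[ \frac{|E_j\cap R|}{|E_j|+|R|} > \tfrac12\delta_0 \qquad\text{for every } j\in\set4 . \]
The essential point is that this single $R$ serves all four indices at once. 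Since $|E_j\cap R|\le|R|$ while $|E_j| = e_j \ge c_K := \inf_{\be'\in K}\min_{i} e'_i > 0$ (the infimum being positive because $K$ is a compact subset of $(\reals^+)^4$), the displayed inequality already forces $|R| > \tfrac12\delta_0 c_K$, a lower bound I shall use at the end.

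Next I would exploit the hypothesis $\bE = \bEdagger$. As recalled in the proof of Lemma~\ref{lemma:rectangle2}, the condition $E_j = E_j^\dagger$ lets us represent $E_j$ (modulo a null set) as $\{(x,y): |y| < f_j(|x|)\}$ with $f_j:[0,\infty)\to[0,\infty)$ nonincreasing. In particular $E_j$ and $R$ are both symmetric about the two coordinate axes, so the part of $E_j\cap R$ lying in the open first quadrant, namely $\{(x,y): 0<x<a,\ 0<y<\min(f_j(x),b)\}$, has measure $\tfrac14|E_j\cap R|$; thus
\[ \int_0^a \min\big(f_j(x),\,b\big)\,dx = \tfrac14\,|E_j\cap R| > \tfrac{\delta_0}{8}\,|R| = \tfrac{\delta_0}{2}\,ab . \]
A Chebyshev-type estimate then gives $\big|\{x\in(0,a): f_j(x) > \tfrac{\delta_0}{4}b\}\big| > \tfrac{\delta_0}{4}a$, and since $f_j$ is nonincreasing this superlevel set is an interval containing $(0,\tfrac{\delta_0}{4}a)$. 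Hence $f_j(x) > \tfrac{\delta_0}{4}b$ for all $0\le x\le \tfrac{\delta_0}{8}a$, which means the closed rectangle $[-\tfrac{\delta_0}{8}a,\tfrac{\delta_0}{8}a]\times[-\tfrac{\delta_0}{8}b,\tfrac{\delta_0}{8}b]$ is contained in $E_j$, for each $j\in\set4$.

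Finally I would apply a single dilation. Choosing $t = (b/a)^{1/2}$, the map $D_t$ carries the rectangle just produced onto the square $[-\tfrac{\delta_0}{8}\sqrt{ab},\tfrac{\delta_0}{8}\sqrt{ab}]^2$; since $t$ does not depend on $j$, we obtain $[-\tfrac{\delta_0}{8}\sqrt{ab},\tfrac{\delta_0}{8}\sqrt{ab}]^2\subset D_t(E_j)$ for every $j\in\set4$ simultaneously. As $ab = |R|/4 > \delta_0 c_K/8$, the half-side is at least $\rho := \tfrac{\delta_0}{8}(\delta_0 c_K/8)^{1/2} > 0$, which depends only on $K$ and $\delta$, so $[-\rho,\rho]^2\subset D_t(E_j)$ for all $j$, as required. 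I expect no serious obstacle beyond bookkeeping of constants: the two steps that carry content are the reduction to one common rectangle $R$ (immediate from the definition of $\lambda$) and the passage from ``$E_j\cap R$ occupies a fixed fraction of $R$'' to ``$E_j$ contains a fixed sub-rectangle of $R$'', where the two-fold Steiner symmetry $\bE = \bEdagger$ — through monotonicity of the profiles $f_j$ — is indispensable; for a general set, $E_j\cap R$ could be spread thinly across $R$ and contain no rectangle of comparable size.
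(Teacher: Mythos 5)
Your argument is correct and follows the same route as the paper's (terse) proof: apply Lemma~\ref{lemma:rectangle2} in contrapositive form to get $\lambda(\bE)\ge\delta_0(K,\delta)$, pick a near-optimal rectangle $R$ common to all four indices, deduce a lower bound on $|R|$, and dilate. The one step the paper leaves implicit --- passing from ``$E_j\cap R$ fills a fixed fraction of $R$'' to ``$E_j$ contains a sub-rectangle of comparable dimensions'' via the monotone profiles $f_j$ coming from $\bE=\bEdagger$ --- is exactly the point your write-up supplies, and it is handled correctly.
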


\begin{proof}
This is a direct consequence of Lemma~\ref{lemma:rectangle2}
and the definition of $\lambda(\bE)$.
Choose a rectangle $R\subset\reals^2$, with sides parallel to the coordinate axes and centered
at the origin, that maximizes the ratio defining $\lambda(\bE)$, up to a factor of $2$.
The lower bound for $\Lambda(\bE)$ implies a lower bound for the Lebesgue
measure of $R$. An appropriate dilation gives $\rho = c |R|^{1/2}$
where $c>0$ is a constant.
\end{proof}

\section{Precompactness \label{precmpct}}

In this section we apply the results of Section~\ref{section:compatible} to 
establish Theorem~\ref{thm:precompact},
concerning the precompactness of symmetrized maximizing sequences for admissible $(\scriptl,\be)$
up to the dilation and translation symmetries of $\Lambda$ introduced above.
A pivotal issue is how the admissibility of $(\scriptl,\be)$
comes into play in the proof. 
As was implicitly shown in the comment following Theorem~\ref{thm:existence},
precompactness cannot hold if $(\scriptl,\be)$ is not admissible,
for if $\be'<\be$ with $e'_j<e_j$,
and if $\bE'=(E'_i: i\in\scripti)$ satisfies $\Lambda(\bE')=\Theta(\be')
=\Theta(\be)$ then any tuple $(E_i: i\in\scripti)$
with $E_i=E'_i$ for every $i\ne j$, 
$E_j\supset E'_j$, and $|E_j|=e_j$
satisfies $\Lambda(\bE)=\Theta(\be)$ and $|\bE|=\be$.

\begin{proof}[Proof of Theorem~\ref{thm:precompact}] 
Suppose that $|\bE^{(n)}|=\be$ for each $n\in\naturals$,
that $\bE = \bE^\dagger$,
and that $\Lambda(\bE^{(n)})\to\Theta(\be)$ as $n\to\infty$.
Write $\bE^{(n)} = (E_j^n: j\in\scripti)$.
By invoking Corollary~\ref{cor:centralsquare} and replacing each $\bE^{(n)}$ with a suitable dilate,
we may assume that there exists a cube $\tilde Q$ with positive sidelength,
centered at the origin, that is contained in $E_i^n$ for every $n$ and every $i\in\set4$.

By the Banach-Alaoglu theorem, there exists a subsequence $n_k$ of indices and functions $g_i\in L^2(\R^2)$ such that for every $i\in\scripti$, $1_{E_i^{n_k}}$ converges weakly in $L^2$ to $g_i$ as $k\to\infty$. 
By replacing ${\bf{E}}^{(n)}$ by a subsequence, we may assume henceforth that 
the full sequence of indicator functions $\one_{E_i^n}$ converges weakly in $L^2$.

The set $E_i^n$ intersected with $(0,\infty)\times[0,\infty)$ is the region under the
graph of a nonnegative, nonincreasing function $f_{i,n}$. 
For any $s>0$, $sf_{i,n}(s) \le \tfrac14 |E_i^n|=\tfrac14 e_i$.
A simple consequence of the Helly section theorem is that
the weak limit of $(\one_{E_i^n}: n\in\naturals)$ 
is the indicator function of a region 
\[ E_i = \{(u,v) :|v|\le f_i(u)\}\]
where $f_i$ is even, the restriction of $f_i$ to $(0,\infty)$
is nonincreasing, and $uf_i(u)\le e_i/4$ for every $u>0$.
Thus $g_i=\one_{E_i}$.

Set 
\begin{gather*}
\bE =(E_1,E_2,E_3,E_4),
\\
{\bf{E}}^{(n)}\cap {\bf{E}} = (E_i^n\cap E_i: i\in\set4),
\\
{\bf{E}}^{(n)}\setminus {\bf{E}} = (E_i^n\setminus E_i: i\in\set4).
\end{gather*} 

\begin{lemma}\label{lemma:claimmain'}
\begin{equation} \label{claim:main'}
    \lim_{n\to\infty} \Lambda({\bf{E}}_i^{(n)}) 
=\lim_{n\to\infty} [\Lambda({\bf{E}}^{(n)}\cap {\bf{E}})+\Lambda({\bf{E}}^{(n)}\setminus{\bf{E}}) ].  
\end{equation}
\end{lemma}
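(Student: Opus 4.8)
The plan is to expand $\Lambda(\bE^{(n)})$ by multilinearity, substituting $\one_{E_i^n}=\one_{E_i^n\cap E_i}+\one_{E_i^n\setminus E_i}$ in each of the four slots. This produces $2^4$ terms, one for each subset $S\subseteq\scripti$, the term attached to $S$ carrying an ``$E_i^n\cap E_i$'' factor in slot $i$ when $i\in S$ and an ``$E_i^n\setminus E_i$'' factor when $i\notin S$. The terms for $S=\scripti$ and for the empty set are precisely $\Lambda(\bE^{(n)}\cap\bE)$ and $\Lambda(\bE^{(n)}\setminus\bE)$, so \eqref{claim:main'} reduces to showing that each of the remaining $14$ ``mixed'' terms $\Lambda_S$ (those with $1\le|S|\le 3$) tends to $0$ as $n\to\infty$.

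Before treating the mixed terms I would record two preliminary facts. First, since $\one_{E_i^n}\rightharpoonup\one_{E_i}$ in $L^2$ we have $|E_i^n\cap E_i|=\langle\one_{E_i^n},\one_{E_i}\rangle\to|E_i|$; because $\one_{E_i^n\cap E_i}\le\one_{E_i}$, this forces $\one_{E_i^n\cap E_i}\to\one_{E_i}$ in $L^1$, hence (being bounded) in $L^2$. Consequently $\one_{E_i^n\setminus E_i}\rightharpoonup 0$ weakly, and in fact $\one_{E_i^n\setminus E_i}\to 0$ in $L^1_{\mathrm{loc}}$, since $\|\one_{E_i^n\setminus E_i}\one_Q\|_1=|E_i^n\cap Q|-|E_i^n\cap E_i\cap Q|\to 0$ for every bounded $Q$; in words, the mass of $E_i^n$ that is not captured by the weak limit escapes to infinity. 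Second, because $(\bE^{(n)})^\dagger=\bE^{(n)}$, each $E_j^n$ is the region below the graph of an even, nonincreasing function with $u\,f_{j,n}(u)\le e_j/4$, and from this normalization alone one checks that for every $\delta>0$ there is a \emph{fixed} compact set $W_\delta^{(j)}$ with $E_j^n\setminus E_j\subseteq W_\delta^{(j)}\cup\bigl(\{|x|<\delta\}\cup\{|y|<\delta\}\bigr)$ for all $n$; moreover $|(E_j^n\setminus E_j)\cap W_\delta^{(j)}|\to 0$ as $n\to\infty$ by the first fact. Thus the escaping mass of $E_j^n$ collects inside an arbitrarily thin coordinate cross.

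The heart of the argument is two estimates, valid after any relabelling of the slots: for fixed finite-measure sets $F,F'\subset\reals^2$ and distinct $j,k\in\scripti$,
\[
\lim_{n\to\infty}\Lambda\bigl(\one_F,\one_{F'},\one_{E_j^n\setminus E_j},\one_{\reals^2}\bigr)=0
\qquad\text{and}\qquad
\lim_{n\to\infty}\Lambda\bigl(\one_F,\one_{E_j^n\setminus E_j},\one_{E_k^n\setminus E_k},\one_{\reals^2}\bigr)=0 .
\]
Both are proved by the same mechanism. Using the nondegeneracy hypothesis, change variables so that two chosen ``localizing'' slots become the coordinates $u,v\in\reals^2$ (the $\one_{\reals^2}$ slot composes to the constant $1$ and drops out); the composed map in the third slot then has the form $(u,v)\mapsto Cu+Dv$ with $C,D$ invertible $2\times2$ matrices. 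For the first estimate, after pairing the $F$ and $F'$ slots the integral becomes $c\int_F\int_{F'}\one_{E_j^n\setminus E_j}(Cu+Dv)\,dv\,du$, and the inner integral is a constant times $|F'\cap(\text{a translate of }M(E_j^n\setminus E_j))|$ for a fixed invertible $M$. Splitting $E_j^n\setminus E_j$ into its portion inside $W_\delta^{(j)}$ (measure $\to 0$) and its portion inside the $\delta$-cross (whose image under $M$ is a union of two strips of width $O(\delta)$), this is at most $2\,\omega_{F'}(O(\delta))+C\,|(E_j^n\setminus E_j)\cap W_\delta^{(j)}|$, where $\omega_{F'}(\delta)$ is the supremum of $|F'\cap R|$ over strips $R$ of width $\delta$ in the two relevant fixed directions; since $|F'|<\infty$, $\omega_{F'}(\delta)\to 0$ as $\delta\to0$ by absolute continuity of the slice-length function of $F'$. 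Integrating in $u$ over $F$, then letting $n\to\infty$ and finally $\delta\to0$, gives the limit $0$. The second estimate is identical, pairing the two ``$\setminus$'' slots instead, controlling $|F\cap(\text{translate of }M'(E_k^n\setminus E_k))|$ by the cross structure of $E_k^n\setminus E_k$, and using only $|E_j^n\setminus E_j|\le e_j$ to integrate out the last variable.

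Finally I would dispose of each mixed term by monotonicity of $\Lambda$ in each argument: replace every ``$E_i^n\cap E_i$'' factor by the larger $\one_{E_i}$, and then bound all but three well-chosen factors by $\one_{\reals^2}$. If $|S|\ge 2$, keeping two slots $i,k\in S$ and one slot $j\notin S$ gives $\Lambda_S\le\Lambda(\one_{E_i},\one_{E_k},\one_{E_j^n\setminus E_j},\one_{\reals^2})\to0$ by the first estimate; if $|S|=1$, keeping the single slot $i\in S$ and two slots $j,k\notin S$ gives $\Lambda_S\le\Lambda(\one_{E_i},\one_{E_j^n\setminus E_j},\one_{E_k^n\setminus E_k},\one_{\reals^2})\to0$ by the second. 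Summing over all $S$ yields \eqref{claim:main'}. I expect the case $|S|=1$ to be the main obstacle: there only one finite-measure, $n$-independent factor is available, so the crude bound $\Lambda\le C|F_a||F_b|$ is useless, and the argument closes only because the Steiner symmetry $(\bE^{(n)})^\dagger=\bE^{(n)}$ confines the escaping parts of the two ``$\setminus$'' sets to a thin cross, making the lone fixed factor — intersected with a thin strip — contribute negligibly.
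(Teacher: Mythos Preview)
Your proof is correct and takes a genuinely different route from the paper's.

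Both arguments begin with the same multilinear expansion into $2^4$ terms and aim to show the $14$ mixed terms vanish. The paper majorizes every mixed term by $\Lambda(E_1^n\cap E_1,\,E_2^n\setminus E_2,\,E_3^n,\,E_4^n)$ (up to permutation), retaining all four $n$--dependent sets, and then splits via boxes $Q_M,Q_N$; the decisive step is an appeal to the Compatibility Lemma (Lemma~\ref{lemma:rectangle2}), showing that $\lambda$ of a suitable $4$--tuple is small because $E_2^n\setminus Q_N$ lies in thin horizontal and vertical strips. You instead discard one factor entirely (replacing it by $\one_{\reals^2}$) and replace each ``$\cap$'' factor by the fixed limit $E_i$, reducing every mixed term to one of two three-slot model integrals; these you handle by a direct strip estimate, using only that $E_j^n\setminus E_j$ decomposes into a piece inside a fixed compact box (whose measure $\to 0$ by $L^1_{\mathrm{loc}}$ convergence) and a piece inside a thin coordinate cross (which intersects any fixed finite-measure set, or its linear image, in small measure by absolute continuity of the slice function). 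The change of variables you use is legitimate: under the partial symmetry hypothesis the matrices $C,D$ (respectively $C',D'$) are diagonal with nonzero entries, hence invertible and cross-preserving. Your approach is more self-contained for this particular lemma, bypassing the machinery of \S\ref{section:compatible}; the paper's approach has the virtue of reusing Lemma~\ref{lemma:rectangle2}, which is needed anyway for Corollary~\ref{cor:centralsquare} and in the proof of Lemma~\ref{lemma:continuityWRTL}.
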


\begin{proof}[Proof of Lemma~\ref{lemma:claimmain'}]
Expressing the indicator function of $E_i^{(n)}$ as the sum of the 
indicator functions of $E_i^n\cap E_i$ and of $E_i^n\setminus E_i$,
and then invoking the multilinearity of $\Lambda$, 
produces an expansion of $\Lambda(\bE^{(n)})$ as a sum of $2^4$ terms, of which two are
the main terms
$\Lambda({\bf{E}}^{(n)}\cap {\bf{E}})$ and
$\Lambda({\bf{E}}^{(n)}\setminus {\bf{E}})$.
Each of the remaining $14$ terms takes the form
$\Lambda(E_1^n\cap E_1,E_2^n\setminus E_2,F_3^n,F_4^n)$ 
with $F_j^n$ equal either to $E_j^n\cap E_j$
or to $E_j^n\setminus E_j$,
up to permutation 
of the indices $1,2,3,4$. 
Moreover, 
\[\Lambda(E_1^n\cap E_1,E_2^n\setminus E_2,F_3^n,F_4^n) 
\le \Lambda(E_1^n\cap E_1,E_2^n\setminus E_2,E_3^n,E_4^n).\]
Thus in order to prove \eqref{claim:main'}, it will suffice to show that 
\[\Lambda(E_1^n\cap E_1,E_2^n\setminus E_2,E_3^n,E_4^n)\to 0\] as $n\to\infty$,
provided that the same reasoning applies with the indices permuted, as it indeed will.

To analyze $\Lambda(E_1^n\cap E_1,E_2^n\setminus E_2,E_3^n,E_4^n)$, 
let $\varepsilon>0$. For $R>0$, let $Q_R=[-R,R]^2$. For $N,M>0$, we have the upper bound
\begin{align*} 
\Lambda(E_1^n\cap E_1,E_2^n\setminus E_2,E_3^n,E_4^n)
&\le \Lambda(E_1\cap Q_M,E_2^n\setminus (E_2\cup Q_N),E_3^n,E_4^n)\\
&\quad\qquad +\Lambda(E_1\cap Q_M,(E_2^n\setminus E_2)\cap Q_N,E_3^n,E_4^n)
+C|E_1\setminus Q_M| e_2 
\\&\le \Lambda(E_1\cap Q_M,E_2^n\setminus Q_N,E_3^n,E_4^n)\\
&\qquad \quad+\Lambda(E_1,(E_2^n\setminus E_2)\cap Q_N,E_3^n,E_4^n)
+C|E_1\setminus Q_M| e_2. 
\end{align*}

We claim that for any $M<\infty$,
\begin{equation}
\Lambda(E_1\cap Q_M,E_2^n\setminus Q_N,E_3^n,E_4^n)
\le \rho_{M,\be}(N)   
\end{equation}
where the function $\rho_{M,\be}(N)$ depends only on $\scriptl,\be,M,N$
and 
$\rho_{M,\be}(N)\to 0$ as $N\to \infty$ while $M,\be,\scriptl$ remain fixed.
Indeed, define $\alpha$ so that 
the intersection of $E^n_2$ with $\{N\}\times\reals$,
which is an interval, has length $2\alpha$.
Then $(-N,N)\times(-\alpha,\alpha)\subset E_2^n$, so $\alpha \le 4N^{-1}e_2$.
Likewise, defining $\beta$ so that
the intersection of $E^n_2$ with $\reals\times \{N\}$
has length $2\beta$, one has $\beta\le 4N^{-1}e_2$.
Therefore if $N$ is sufficiently large, $E_2^n\setminus Q_N$
is contained in the union of $\reals\times(-\alpha,\alpha)$ with $(-\beta,\beta)\times\reals$.
Define $E^n_{2,h}$ to be the former portion of $E_2^n$, and $E^n_{2,v}$ to be the latter.
Consider
$\Lambda(E_1\cap Q_M,E_{2,h}^n\setminus Q_N,E_3^n,E_4^n)$,
which is majorized by
$\Lambda(E_1\cap Q_M,\tilde E_2^n,E_3^n,E_4^n)$,
where $\tilde E_2^n = (E_{2,h}^n)^\dagger$,
is the horizontal Steiner symmetrization\footnote{By the horizontal Steiner symmetrization
of $E\subset\reals^2$ we mean the set $F\subset\reals^2$
obtained by replacing $E^y = \{x\in\reals^1: (x,y)\in E\}$
by $[-E^y/2,E^y/2]$ whenever $|E^y|>0$, and by $\emptyset$
when $|E^y|=0$.}
of $E_{2,h}^n$.

We will apply Lemma~\ref{lemma:rectangle2},
which asserts that $\Lambda(\bE)$ is small if the quantity $\lambda(\bE)$ 
defined in \eqref{lambdadefn} is small.
Consider any rectangle $R\subset\reals^2$ with sides parallel to the coordinate axes
and centered at $0$.
In evaluating $\lambda(\bE)$, clearly only rectangles $R$ whose vertical sides
have length $\lesssim\alpha$ need be considered.
If $R$ does have vertical length $\lesssim\alpha$
then $R$ has small measure unless its horizontal side has length $\gtrsim \alpha^{-1}e_2
\gtrsim N/4$.
However, in this case $|R\cap E_1|\le |R\cap Q_M| \lesssim\alpha M
\lesssim MN^{-1}e_2$.
Therefore $\lambda(E_1\cap Q_M, \tilde E_2^n,E_3^n,E_4^n)$ becomes arbitrarily small
as $N$ becomes arbitrarily large. Therefore by Lemma~\ref{lemma:rectangle2}, the same goes for
$\Lambda(E_1\cap Q_M, E_{2,h}^n,E_3^n,E_4^n)$.
The same reasoning applies to $\Lambda(E_1\cap Q_M, E_{2,v}^n,E_3^n,E_4^n)$.

Choose $M$ sufficiently large that $|E_1\setminus Q_M|<\varepsilon$. 
Then choose $N$ large enough so that $\rho_{M,\be}(N)<\eps$.
Finally, the weak convergence of $E_2^n$ implies that
\[ |(E_2^n\setminus E_2)\cap Q_N|=|E_2^n\cap (Q_N\setminus E_2)|\to |E_2\cap( Q_N\setminus E_2)|= 0
\ \text{ as $n\to\infty$.} \]
Thus 
$\underset{n\to\infty}\limsup\, \Lambda(E_1^n\cap E_1,E_2^n\setminus E_2,E_3^n,E_4^n)\le 2\varepsilon$. 
Since $\eps>0$ was arbitrary, this proves that
$\Lambda(E_1^n\cap E_1,E_2^n\setminus E_2,E_3^n,E_4^n)\to 0$.
The same reasoning, with natural changes in notation,
proves that the other cross terms in the expansion of $\Lambda({\bf{E}}^{(n)})$ also have limit zero.
This completes the proof of 
Lemma~\ref{lemma:claimmain'}.
\end{proof}

We claim next that
\begin{equation} \label{claim:second} 
\lim_{n\to\infty} \Lambda(({\bf{E}}^{(n)}\setminus {\bf{E}}))\to 0.
\end{equation}
It suffices to show that
$\lim_{n\to\infty} \Lambda(({\bf{E}}^{(n)}\setminus {\bf{E}})^\star)\to 0$,
where $({\bf{E}}^{(n)}\setminus {\bf{E}})^\star = ((E_i^n\setminus E_i)^\star: i\in\set4)$ 
denotes the symmetrization of ${\bf{E}}^{(n)}\setminus {\bf{E}}$. 

To prove \eqref{claim:second}, note 
that since $|E_i^n\cap E_i|\to |E_i|$ by the weak convergence, 
\[\lim_{n\to\infty}|({\bf{E}}^{(n)}\setminus {\bf{E}})^*|=(e_1-|E_1|,e_2-|E_2|,e_3-|E_3|,e_4-|E_4|).\]  
If $\underset{n\to\infty}\limsup\, \Lambda(({\bf{E}}^{(n)}\setminus {\bf{E}})^*)>0$ then 
after passing to a subsequence of indices $n$ that realizes the limit supremum,
we may invoke Corollary~\ref{cor:centralsquare} to conclude that there exist a sequence of dilations
$D_{\eta_k}$ and $\delta>0$ such that the cube $[-\delta,\delta]^2=Q_\delta$ is contained in $ D_{\eta_k}((E_i^{k}\setminus E_i)^*)$ for all $i,k$. 

By a change of variables which preserves the partially symmetric multilinear structure of $\Lambda$ (permitted by Definition \ref{newnondegeneracy}), we may assume without loss of generality 
that $L_1(x_1,y_1,x_2,y_2)=(x_1,y_1)$ and $L_2(x_1,y_1,x_2,y_2)=(x_2,y_2)$. 
For each $k$, let $N_k>0$ be large enough so $|E_1^{n_k}\setminus Q_{N_k}|<\frac{1}{k}$ 
and $|D_{\eta_k}((E_1^{n_k}\setminus E_1)^*)\setminus  Q_{N_k}|<\frac{1}{k}$. 
Also let $v_1^k\in \R^2$ be large enough so that $Q_{N_k}\cap (Q_{N_k}+v_1^k)=\emptyset$. 
Then 
\begin{align*}
\Lambda({\bf{E}}^{(n_k)}\cap {\bf{E}})&+\Lambda({\bf{E}}^{(n_k)}\setminus{\bf{E}})
\le \Lambda(E_1^{n_k}\cap E_1 \cap Q_{N_k},E_2^{n_k}\cap E_2,E_3^{n_k}\cap E_3,E_4^{n_k}\cap E_4)\\
    &\quad+\Lambda(D_{\eta_k}(E_1^{n_k}\setminus E_1)^* \cap Q_{N_k},D_{\eta_k}(E_2^{n_k}\setminus E_2)^*,D_{\eta_k}(E_3^{n_k}\setminus E_3)^*,D_{\eta_k}(E_4^{n_k}\setminus E_4)^*) \\
& \qquad\qquad + Ce_2k^{-1} \\
&= \Lambda(E_1^{n_k}\cap E_1 \cap Q_{N_k}+v_1^k,E_2^{n_k}\cap E_2,E_3^{n_k}\cap E_3+v_3^k,E_4^{n_k}\cap E_4+v_4^k)\\
    &\quad+\Lambda(D_{\eta_k}(E_1^{n_k}\setminus E_1)^* \cap Q_{N_k},D_{\eta_k}(E_2^{n_k}\setminus E_2)^*,
D_{\eta_k}(E_3^{n_k}\setminus E_3)^*,D_{\eta_k}(E_4^{n_k}\setminus E_4)^*) \\
& \qquad\qquad + Ce_2k^{-1}
\end{align*}
for certain $v_3^k,v_4^k\in\reals^2$ determined by $v_1^k$ and $\scriptl$.
Thus
\begin{align*}
\Lambda({\bf{E}}^{(n_k)}\cap {\bf{E}})&+\Lambda({\bf{E}}^{(n_k)}\setminus{\bf{E}})
\le  \Theta(e_1,|(E_2^{n_k}\cap E_2)\cup D_{\eta_k}(E_2^{n_k}\setminus E_2)^*|,e_3,e_4)+Ce_2k^{-1}. 
\end{align*}

There exists a cube $\tilde Q$ centered at $0\in\reals^2$,
of positive sidelength, that is contained in $E_2$
and in $E_2^{n_k}$ for every $k$.
Therefore for every sufficiently large $k$,
\[|(E_2^{n_k}\cap E_2)\cup D_{\eta_k}(E_2^{n_k}\setminus E_2)^*|\le e_2-|\tilde{Q}\cap Q_\delta|<e_2.  \] 
By letting $k\to\infty$ one deduces that
\[ \Theta(e_1,e_2,e_3,e_4)\le \Theta(e_1,e_2-|\tilde{Q}\cap Q_\delta|,e_3,e_4).\]
This contradicts the definition of admissibility of ${\bf{e}}$. 
Therefore \eqref{claim:second} must hold.

\medskip
Combining \eqref{claim:second} with \eqref{claim:main'},
we find that $\Lambda(\bE^{(n)}\cap\bE)\to\Theta(\be)$ as $n\to\infty$.
By the admissibility of ${\bf{e}}$, this forces $\underset{n\to\infty}\liminf\, 
|E_i^n\cap E_i|=e_i$ for each $i\in\set4$.
Since $e_i = |E_i^n|$,
that completes the proof of Theorem~\ref{thm:precompact}.
\end{proof} 


\section{Continuity of $\Theta$ with respect to $\scriptl$}

In the next lemma, $\scriptl^0$ is not assumed to satisfy the full symmetry hypothesis
\eqref{RBLLsymmetry}, even though this notation is reserved for that special case
in nearly all of this paper. 

\begin{lemma} \label{lemma:continuityWRTL}
Let $\scriptl^0$ and $\scriptl_\nu$ 
be nondegenerate and satisfy \eqref{PSH}. 
Let $((\scriptl_\nu,\be_\nu): \nu\in\naturals)$ 
be a sequence of data such that $(\scriptl_\nu,\be_\nu)\to (\scriptl^0,\be^0)$
as $\nu\to\infty$.
Then
\begin{equation} \lim_{\nu\to\infty} \Theta_{\scriptl_\nu}(\be_\nu)
= \Theta_{\scriptl^0}(\be^0).\end{equation}
\end{lemma}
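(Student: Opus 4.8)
The plan is to prove the two inequalities $\limsup_{\nu}\Theta_{\scriptl_\nu}(\be_\nu)\le\Theta_{\scriptl^0}(\be^0)$ and $\liminf_{\nu}\Theta_{\scriptl_\nu}(\be_\nu)\ge\Theta_{\scriptl^0}(\be^0)$ separately. The second (lower bound) is easy: fix a near-maximizer $\bE$ for $\scriptl^0$ with $|\bE|=\be^0$ and $\Lambda_{\scriptl^0}(\bE)\ge\Theta_{\scriptl^0}(\be^0)-\eps$, adjust the measures of its components by adding or deleting small sets to get $\bE_\nu$ with $|\bE_\nu|=\be_\nu$ and $|E_{j,\nu}\symdif E_j|\to 0$, and observe that $\Lambda_{\scriptl_\nu}(\bE_\nu)\to\Lambda_{\scriptl^0}(\bE)$ because for a \emph{fixed} bounded tuple of sets the map $\scriptl\mapsto\Lambda_{\scriptl}(\bE)$ is continuous (a change of variables in the integral makes the dependence on the $L_i$ explicit and continuous), and the measure-perturbation cost is controlled by the local Lipschitz bound \eqref{basicbound}. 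Hence $\liminf_\nu\Theta_{\scriptl_\nu}(\be_\nu)\ge\Theta_{\scriptl^0}(\be^0)-C\eps$ for every $\eps$.

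For the upper bound, the issue is exactly the failure of uniform continuity of $\scriptl\mapsto\Lambda_\scriptl(\bE)$ noted right after Lemma 3.4: near-maximizers for $\scriptl_\nu$ could in principle spread out or concentrate as $\nu\to\infty$. First I would reduce to the admissible case: by the Proposition in Section 3 choose admissible $\be_\nu'\le\be_\nu$ with $\Theta_{\scriptl_\nu}(\be_\nu')=\Theta_{\scriptl_\nu}(\be_\nu)$; after passing to a subsequence $\be_\nu'\to\be'\le\be^0$, and by monotonicity of $\Theta$ in the measures it suffices to bound $\limsup_\nu\Theta_{\scriptl_\nu}(\be_\nu')$ by $\Theta_{\scriptl^0}(\be^0)$. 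Then, for each $\nu$, use the symmetrization inequality $\Lambda_{\scriptl_\nu}(\bE)\le\Lambda_{\scriptl_\nu}(\bE^\dagger)$ to pick a near-maximizer $\bE^{(\nu)}$ with $(\bE^{(\nu)})^\dagger=\bE^{(\nu)}$, $|\bE^{(\nu)}|=\be_\nu'$, and $\Lambda_{\scriptl_\nu}(\bE^{(\nu)})\ge\Theta_{\scriptl_\nu}(\be_\nu')-1/\nu$. Apply Corollary~\ref{cor:centralsquare} (whose constants depend only on a compact set of measures and a lower bound $\delta$ on $\Lambda$, hence are uniform for $\scriptl_\nu$ close to $\scriptl^0$ — one should check that Lemma~\ref{lemma:rectangle2} and its corollary hold with constants uniform in a neighborhood of $\scriptl^0$, which is true because the only input is the nondegeneracy bounds, which are stable) to dilate each $\bE^{(\nu)}$ so that a fixed cube $\tilde Q$ sits inside every component.

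Now run the precompactness argument of Section~\ref{precmpct} with varying $\scriptl_\nu$ in place of a fixed $\scriptl$: extract weak $L^2$ limits $\one_{E_i}$ of $\one_{E_i^{\nu}}$, using the Helly-section structure (each symmetrized component is a region under a nonincreasing graph with $uf(u)\le e_i/4$) to ensure the limits are again indicator functions of symmetric sets. The key point is that the cross-term estimates in Lemma~\ref{lemma:claimmain'} and the ``escape of mass'' estimate leading to \eqref{claim:second} go through verbatim, because every tool used — the compatibility Lemma~\ref{lemma:rectangle2}, Sublemmas~\ref{lemma:simplest4bound} and \ref{lemma:Ssum}, and the basic bound $\Lambda\le C|E_i||E_j|$ — has constants depending only on bounds for $\scriptl_\nu$, uniform near $\scriptl^0$; and the admissibility used to kill the escaping mass is the admissibility of $\be'$ for $\scriptl^0$, obtained in the limit (if mass escaped one would get $\Theta_{\scriptl^0}(\be')\le\Theta_{\scriptl^0}(\be'-|\tilde Q\cap Q_\delta|\,\be_j)$ by lower semicontinuity in $\scriptl$, contradicting admissibility). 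One subtlety: the identity $\Lambda_{\scriptl_\nu}(\bE^{(\nu)})\to\Lambda_{\scriptl^0}(\bE)$ for the limiting tuple $\bE$ requires that $\bE$ have bounded components; the Helly bound $uf_i(u)\le e_i/4$ does not give boundedness, only finiteness of measure, so one truncates $E_i$ to $E_i\cap Q_M$, lets $M\to\infty$ at the end, and uses the $N\to\infty$ decay estimate $\rho_{M,\be}(N)$ from Section~\ref{precmpct} (again with uniform constants) to control the tails — this interplay of the $M$, $N$, $\nu$ limits is the main obstacle. Having shown $\one_{E_i^\nu}\to\one_{E_i}$ in $L^2$ (equivalently $|E_i^\nu\symdif E_i|\to 0$) along a subsequence, continuity of $(\scriptl,\bE)\mapsto\Lambda_\scriptl(\bE)$ in both arguments jointly (for tuples with a uniform measure bound and the fixed cube inside, with the tail control just described) gives $\Theta_{\scriptl_\nu}(\be_\nu')\to\Lambda_{\scriptl^0}(\bE)\le\Theta_{\scriptl^0}(\be')\le\Theta_{\scriptl^0}(\be^0)$, completing the upper bound and hence the lemma.
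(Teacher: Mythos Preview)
Your lower bound is fine and matches the paper. The gap is in the upper bound, specifically at the escape-of-mass step. You assert that ``the admissibility used to kill the escaping mass is the admissibility of $\be'$ for $\scriptl^0$, obtained in the limit''---but you never establish that $\be'=\lim_\nu \be_\nu'$ is admissible for $\scriptl^0$. Admissibility of $\be_\nu'$ for $\scriptl_\nu$ does not pass to the limit without already knowing continuity of $\Theta$ in $\scriptl$, which is the statement being proved. Your parenthetical appeal to ``lower semicontinuity in $\scriptl$'' doesn't help either: the overlap argument yields $\limsup_\nu\Theta_{\scriptl_\nu}(\be_\nu')\le\limsup_\nu\Theta_{\scriptl_\nu}(\be_\nu'-c\cdot\be_j)$, and to turn the right-hand side into a statement about $\Theta_{\scriptl^0}$ you would need the \emph{upper} semicontinuity direction, i.e.\ exactly what you are trying to prove. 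Trying instead to derive a contradiction at finite $\nu$ from admissibility of $\be_\nu'$ for $\scriptl_\nu$ fails too: the overlap bound carries an additive $C/\nu$ error, while admissibility forbids only exact equality $\Theta_{\scriptl_\nu}(\be''')=\Theta_{\scriptl_\nu}(\be_\nu')$ for $\be'''<\be_\nu'$, not approximate equality.

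The paper sidesteps this entirely by \emph{not} invoking admissibility in the upper bound. It runs the decomposition once to split off a strongly convergent piece $\bE^{0,\sharp}$ and a remainder $\bE_\nu^\flat$, obtaining $\limsup_\nu\Theta_{\scriptl_\nu}(\be_\nu)\le\Theta_{\scriptl^0}(\be_1^0)+\limsup_\nu\Theta_{\scriptl_\nu}(\be_{\nu,1})$ with $\be_1^0+\lim_\nu\be_{\nu,1}=\be^0$. If the remainder term is positive, it iterates, producing a (possibly infinite) sequence $\be_k^0$ with $\sum_k\be_k^0\le\be^0$ and $\limsup_\nu\Theta_{\scriptl_\nu}(\be_\nu)\le\sum_k\Theta_{\scriptl^0}(\be_k^0)$; the residual is forced to zero because each step extracts a cube of definite size. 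The conclusion then follows from the triangle inequality \eqref{triangle}, which needs no admissibility hypothesis. This profile-decomposition style iteration is the missing idea.
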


\begin{proof}
There exists
a symmetrized maximizing configuration $\bE^0$ for $(\scriptl^0,\be^0)$.
Modifying each component $E^0_i$ appropriately yields
a sequence $\bE_\nu$ of symmetrized $4$--tuples satisfying $|\bE_\nu|=\be_\nu$.
Then $|E_{\nu,i}\symdif E^0_i|\to 0$ as $\nu\to\infty$ for each $i\in\scripti$.
It follows that
$\Lambda_{\scriptl_\nu}(\bE_\nu)\to\Lambda_{\scriptl^0}(\bE^0) =\Theta_{\scriptl^0}(\be^0)$.
Therefore
\begin{equation}
\limsup_{\nu\to\infty} \Theta_{\scriptl_\nu}(\be_\nu)\ge \Theta_{\scriptl^0}(\be^0).
\end{equation}
However, no converse inequality follows with comparable ease,
because the mapping $(\scriptl,\bE)\mapsto\Lambda_\scriptl(\bE)$ fails to be continuous
in any sufficiently uniform sense with respect to $\bE$.

To prove the converse,
pass to a subsequence to ensure that $\Theta_{\scriptl_\nu}(\be_\nu)$
converges to $\Theta_{\scriptl^0}(\be^0)$ as $\nu\to\infty$.
Let $(\bE_\nu)$ satisfy $|\bE_\nu|=\be_\nu$
and $\limsup_{\nu\to\infty} \Lambda_{\scriptl_\nu}(\bE_\nu)
=\limsup_{\nu\to\infty} \Theta_{\scriptl_\nu}(\be_\nu)$,
and let each $\bE_\nu$ satisfy
$\bE_\nu = \bE_\nu^\dagger$.
Write $\bE_\nu = (E_{\nu,i}: i\in\scripti)$.
By replacing $\bE_\nu$ by $D_{t_\nu}\bE_\nu$ for an appropriately
chosen sequence of parameters $t_\nu\in(0,\infty)$,
we may assume that there exists $\rho>0$ such that
$[-\rho,\rho]^2\subset E_{\nu,i}$ for each $i\in\scripti$.

By repeating the reasoning in the proof of Theorem~\ref{thm:precompact}
we conclude that after passing to a subsequence, there exists
$\bE^{0,\sharp}=(E^{0,\sharp}_{i}: i\in\scripti)$,
satisfying $\bE^{0,\sharp} = (\bE^{0,\sharp})^\dagger$,
such that for each $i\in\scripti$,
$E_{\nu,i}$ may be expressed as a disjoint union
$E_{\nu,i}^\sharp \cup E_{\nu,i}^\flat$,
satisfying
$E_{\nu,i}^\sharp =(E_{\nu,i}^\sharp)^\dagger$
and
$E_{\nu,i}^\flat=(E_{\nu,i}^\flat)^\dagger$,
with $|E_{\nu,i}^\sharp\symdif E^0_{i,1}|\to 0$,
and
\[ \lim_{\nu\to\infty}
\big( \Lambda_{\scriptl_\nu}(\bE_\nu^\sharp)
+ \Lambda_{\scriptl_\nu}(\bE_\nu^\flat) \big)
= \lim_{\nu\to\infty} \Theta_{\scriptl_\nu}(\be_\nu),\]
where $\bE_\nu^\sharp = (E_{\nu,i}^\sharp: i\in\scripti)$
and analogously for $\bE_\nu^\flat$.
The cross terms that arise in the proof of Theorem~\ref{thm:precompact}
contribute zero in the limit $\nu\to\infty$,
because the bounds in Lemma~\ref{lemma:rectangle2}
are uniform in $\nu$ since $\scriptl_\nu\to\scriptl^0$.

Since $|E_{\nu,i}^\sharp \symdif E^{0,\sharp}_i|\to 0$
and $\scriptl_\nu\to\scriptl^0$ as $\nu\to\infty$,
\begin{equation}
\lim_{\nu\to\infty}
\Lambda_{\scriptl_\nu}(\bE_\nu^\sharp)
= \Lambda_{\scriptl^0}(\bE^{0,\sharp}). \end{equation}

Therefore setting $\be_{\nu,1}=\be^\flat_\nu$ and $\be^0_1 = \be^{0,\sharp}$,
\begin{equation} \limsup_{\nu\to\infty} \Theta_{\scriptl_\nu}(\be_\nu)
\le \Theta_{\scriptl^0}(\be^0_1) 
+ \limsup_{\nu\to\infty} \Theta_{\scriptl_\nu}(\be^\flat_{\nu,1})\end{equation}
and
\begin{equation} \be^0 = \be^0_1 + \lim_{\nu\to\infty} \be_{\nu,1}\end{equation}
with addition and limits defined componentwise for $4$--tuples.
If $\lim_{\nu\to\infty} \Theta_{\scriptl_\nu}(\be_{\nu,1})=0$
then the proof is complete.

Write $\be_{\nu,1}=(\be_{\nu,1,i}: i\in\scripti)$.
Observe that $\min_{i\in\scripti} |E^0_i|\ge 4\rho^2$
since $E_{\nu,i}\supset [-\rho,\rho]^2$ for every $\nu$.
Therefore for every sufficiently large $\nu$,
$e_{\nu,1,i}\le e^0_i-3\rho^2$.

Pass from the full sequence to a subsequence of indices $\nu$, along which
$\limsup_{\nu\to\infty} \Theta_{\scriptl_\nu}(\be_{\nu,1})$
is achieved in the limit.
Apply the above construction to 
obtain a partition of $\bE_{\nu,1}$ in terms of $\bE_{\nu,1}^\sharp$ and $\bE_{\nu,1}^\flat$
and a limiting set $\bE^{0,\sharp}_2 = \bE^0_2$.
Conclude in the same way that
\begin{equation}
\lim_{\nu\to\infty} \Theta_{\scriptl_\nu}(\be_\nu)
\le 
\Theta_{\scriptl^0}(\be^0_1)
+ \Theta_{\scriptl^0}(\be^0_2)
+ 
\limsup_{\nu\to\infty}
\Theta_{\scriptl_\nu}(\be_{\nu,2})\end{equation}
where $\be_{\nu,2} = \be^\flat_{\nu,1}$, with
\begin{equation} \lim_{\nu\to\infty}
\big(\be^0_1+\be_0^2+\be_{\nu,2}\big) =  \be^0 .  \end{equation}
As in the initial step, each component $e^0_{2,i}$ of the tuple $\be^0_2$
is minorized by a positive quantity, which in turn is minorized by
a positive function of $\lim_{\nu\to\infty} \Theta_{\scriptl_\nu}(\be_{\nu,2}))$.

Iterate this process. It may halt after finitely many steps, in which case it produces
a finite sequence $\be^0_k$ satisfying $\sum_k \be^0_k\le \be^0$
and $\limsup_{\nu\to\infty} \Theta_{\scriptl^0}(\be_\nu) = \sum_k \Theta_{\scriptl^0}(\be^0_k)$.
Obviously
$\sum_k \Theta_{\scriptl^0}(\be^0_k) \le \Theta_{\scriptl^0}(\be^0)$,
completing the proof.

If the process fails to halt after finitely many steps then
it produces infinite sequences  $\be^0_k$ and  $\be_{\nu,k}$.  
Necessarily 
\begin{equation}
\lim_{k\to\infty} \limsup_{\nu\to\infty} \Theta_{\scriptl_\nu}(\be_{\nu,k}) = 0. 
\end{equation}
Indeed, at each step there exists $\rho_k>0$
for which $[-\rho_k,\rho_k]^2\subset E^{0,\sharp}_i$ for each $i\in\scripti$,
and $\rho_k$ is bounded below by a strictly positive quantity
if $\limsup_{\nu\to\infty} \Theta_{\scriptl_\nu}(\be_{\nu,k})$ 
is bounded away from zero.
If $\limsup_{\nu\to\infty}\Theta_{\scriptl_\nu}(\be_{\nu,k})$
did not tend to zero then each component of $\be^0_k$
would be bounded away from zero uniformly in $k$,
contradicting the relation $\sum_k \be^0_k\le \be^0$.

Therefore
\begin{equation} \limsup_{\nu\to\infty} \Theta_{\scriptl_\nu}(\be_\nu)
\le \sum_{k=0}^\infty \Theta_{\scriptl^0}(\be^0_k)\end{equation}
with $\sum_k \be^0_k\le \be^0$.
This last inequality  implies that
$\sum_{k=0}^\infty \Theta_{\scriptl^0}(\be^0_k)
\le \Theta_{\scriptl^0}(\be^0)$,
once more completing the proof.
\end{proof}

\begin{corollary} \label{cor:strictsublinearity}
Let $\be',\be''\in(0,\infty)^4$. Let $\scriptl$
be nondegenerate and satisfy the partial symmetry hypothesis \eqref{PSH}.
If $(\scriptl,\be'+\be'')$ is admissible then 
\begin{equation}
\Theta_\scriptl(\be')+\Theta_\scriptl(\be'') < \Theta_\scriptl(\be'+\be'').
\end{equation}
\end{corollary}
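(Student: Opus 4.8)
The plan is to argue by contradiction. By the triangle inequality \eqref{triangle} we already have $\Theta_\scriptl(\be')+\Theta_\scriptl(\be'')\le\Theta_\scriptl(\be)$ where $\be:=\be'+\be''$, so suppose that equality holds. A first reduction disposes of the case in which $\be'$ or $\be''$ is itself inadmissible: if, say, there is $\tilde\be'<\be'$ with $\Theta_\scriptl(\tilde\be')=\Theta_\scriptl(\be')$, then \eqref{triangle} gives $\Theta_\scriptl(\tilde\be'+\be'')\ge\Theta_\scriptl(\tilde\be')+\Theta_\scriptl(\be'')=\Theta_\scriptl(\be')+\Theta_\scriptl(\be'')=\Theta_\scriptl(\be)$, while $\tilde\be'+\be''<\be$, contradicting the admissibility of $\be$. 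So we may assume that $\be'$ and $\be''$ are both admissible.

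By Theorem~\ref{thm:existence} choose maximizers $\bE'$ and $\bE''$ for $(\scriptl,\be')$ and $(\scriptl,\be'')$; replacing them by $\bE'^\dagger$, $\bE''^\dagger$ and by suitable measure--preserving dilates, we may assume $\bE'=\bE'^\dagger$, $\bE''=\bE''^\dagger$ and, by Corollary~\ref{cor:centralsquare}, that some cube $[-\rho,\rho]^2$ is contained in every $E'_j$ and every $E''_j$. Fix $\bv\in\reals^4$ outside the nullspace of each $L_j$ and with $L_j(\bv)$ having nonzero first and second coordinates for every $j$. For $n\in\naturals$ set $\bG^{(n)}_j:=E'_j\cup(E''_j+nL_j(\bv))$. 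For $n$ large the two pieces of each $G^{(n)}_j$ are disjoint, so $|\bG^{(n)}|=\be$, and — exactly as in the proof of \eqref{triangle}, using nondegeneracy to see that every off--diagonal term in the multilinear expansion is eventually zero — one gets $\Lambda_\scriptl(\bG^{(n)})=\Lambda_\scriptl(\bE')+\Lambda_\scriptl(\bE'')=\Theta_\scriptl(\be')+\Theta_\scriptl(\be'')=\Theta_\scriptl(\be)$. Thus for all large $n$, $\bG^{(n)}$ is a genuine maximizer for $(\scriptl,\be)$ whose support splits into two clusters lying at mutual distance $\sim n$.

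The heart of the argument is to show that such maximizers cannot exist. The plan is to exploit the Euler--Lagrange description afforded by \eqref{Kidefn}: for a maximizer $\bG^{(n)}$ the set $G^{(n)}_j$ must, up to null sets and the level set itself, be a superlevel set $\{K_j^{\bG^{(n)}}\ge t_j^{(n)}\}$. Repeating the cross--term vanishing analysis, together with the nondegeneracy of $\{\ell_{i,j}:i\ne j\}$, one checks that for $n$ large $K_j^{\bG^{(n)}}$ coincides with $K_j^{\bE'}$ on a fixed neighborhood of the origin and with $K_j^{\bE''}(\,\cdot-nL_j(\bv))$ on a neighborhood of $nL_j(\bv)$, and is small in between; since $\bE'$ and $\bE''$ are admissible maximizers one has $E'_j=\{K_j^{\bE'}\ge\tau'_j\}$, $E''_j=\{K_j^{\bE''}\ge\tau''_j\}$ with $\tau'_j,\tau''_j>0$, and matching the threshold on the two clusters forces $t_j^{(n)}=\tau'_j=\tau''_j$ for every $j$. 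One then uses the measure--preserving dilation symmetry together with the scaling law $\Theta_\scriptl(r\be)=r^2\Theta_\scriptl(\be)$ — which multiplies the thresholds by $r$ while rescaling the underlying sets — to produce, after an appropriate rescaling of one cluster, a maximizing configuration of strictly smaller total measure, contradicting the admissibility of $\be$.

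I expect the last step — turning the coincidence $\tau'_j=\tau''_j$ into a contradiction with admissibility for an \emph{arbitrary} decomposition $\be=\be'+\be''$, while dealing with the possible non--uniqueness of $\bE'$ and $\bE''$ and the degenerate possibility $\tau'_j=0$ — to be the main obstacle, and this is precisely the place where admissibility of $\be'+\be''$ (rather than merely of $\be'$ and $\be''$) must be used in an essential way. The remaining ingredients (the reduction, the cross--term vanishing, and the local identification of $K_j^{\bG^{(n)}}$) are variants of arguments already carried out in the proofs of \eqref{triangle} and of Theorem~\ref{thm:precompact}.
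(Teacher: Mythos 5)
Your reduction to the case where $\be'$ and $\be''$ are both admissible is correct (and is a point the paper's own proof leaves implicit), and the construction of a two-cluster configuration by translating $\bE''$ far away is sound in spirit. But the argument as proposed has a genuine gap exactly where you flag it: the endgame. You build a (purportedly exact) maximizer $\bG^{(n)}$ for $(\scriptl,\be)$ consisting of two distant clusters and then try to derive a contradiction from the Euler--Lagrange description, matching the level-set thresholds $\tau'_j=\tau''_j$ and invoking the scaling law $\Theta(r\be)=r^2\Theta(\be)$ to ``produce a maximizing configuration of strictly smaller total measure.'' No mechanism is given for that last step --- rescaling one cluster by $D_t$ preserves all four measures, while rescaling $\be''\mapsto r\be''$ changes all four components and all four thresholds simultaneously, so it is not clear that any contradiction with admissibility of $\be'+\be''$ emerges. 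There are also secondary problems: exactness of $\bG^{(n)}$ as a maximizer (and hence the applicability of the Euler--Lagrange relation at all) requires the component sets to be bounded so that the cross terms vanish identically and the unions are genuinely disjoint, which you have not established; and the threshold-matching and $\tau'_j=0$ issues you mention are real.

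The paper's proof avoids all of this with a much simpler device: rather than separating all four components, it \emph{overlaps exactly one}. After symmetrizing and dilating, $E'_1$ and $E''_1$ both contain a fixed cube $[-\rho,\rho]^2$ (Corollary~\ref{cor:centralsquare}). Choosing $\bv$ large inside the nullspace of $L_1$ (nondegeneracy guarantees $\bv\notin\ker L_j$ for $j\ne 1$), the tuple $\bE=(E'_i\cup(E''_i+L_i(\bv)):i\in\scripti)$ has $\Lambda(\bE)\ge\Theta(\be')+\Theta(\be'')-\eps$ --- the separation in components $2,3,4$ kills the cross terms up to $\eps$ --- while $|E'_1\cup(E''_1+L_1(\bv))|=|E'_1\cup E''_1|\le e'_1+e''_1-c$ with $c\ge 4\rho^2>0$ fixed. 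Hence $\Theta(e'_1+e''_1-c,e'_2+e''_2,\dots)\ge\Theta(\be')+\Theta(\be'')$. If the claimed strict inequality failed, the right side would equal $\Theta(\be'+\be'')$, and monotonicity of $\Theta$ would force $\Theta$ to be constant between $\be'+\be''-(c,0,0,0)$ and $\be'+\be''$, contradicting admissibility of $\be'+\be''$ --- with no appeal to the structure of maximizers. I recommend replacing your final paragraph with this overlap argument; your first two paragraphs can be retained essentially as written.
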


\begin{proof}
First suppose that $\be',\be''$ are admissible.
Let $\bE',\bE''$ be associated symmetrized maximizing tuples.
As in the proof of Theorem~\ref{thm:precompact},
there exists $c>0$ such that 
for any $\eps>0$
there exists $\bv\in \reals^4$
such that the tuple $\bE = (E'_i\cup (E''_i+\scriptl_i(\bv)): i\in\scripti)$
satisfies $\Lambda(\bE) \ge \Lambda(\bE')+\Lambda(\bE'')-\eps$
and $|E'_1\cup (E''_1+\scriptl_1(\bv))|\le e'_1+e''_1-c$.
Thus $\Theta(e'_1+e''_1-c,e'_2+e''_2,\dots) \ge \Theta(\be'+\be'')$,
contradicting the admissibility of $\be'+\be''$.
\end{proof}

\begin{lemma}
Let $(\scriptl^0,\be^0)$ and $(\scriptl_\nu,\be_\nu)$ 
satisfy the hypotheses of Lemma~\ref{lemma:continuityWRTL}.
Suppose that $(\scriptl^0,\be^0)$ satisfies
the full symmetry hypothesis of Definition~\ref{defn:fullRBLL}, and is strictly admissible.
Suppose that $|\bE_\nu|=\be_\nu$,
that $\bE_\nu = \bE_\nu^\dagger$,
and that $\Lambda_{\scriptl_\nu}(\bE_\nu)\to \Theta_{\scriptl^0}(\be^0)$
as $\nu\to\infty$.
Then there exists a sequence $t_\nu$ such that
\begin{equation}
|D_{t_\nu}\bE_\nu\symdif \bE^0|\to 0 \text{ as $\nu\to\infty$.}
\end{equation}
\end{lemma}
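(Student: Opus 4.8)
The plan is to rerun the compactness scheme behind Theorem~\ref{thm:precompact}, now allowing the structures $\scriptl_\nu$ to vary, and then to pin down the limiting configuration by invoking the rigidity theorem of \cite{christoneill}. Throughout, $\bE^0$ denotes the $4$--tuple of balls centered at the origin with $|\bE^0|=\be^0$; since $\scriptl^0$ satisfies the full symmetry hypothesis, the symmetrization inequality \eqref{ineq:symm} shows that $\bE^0$ is a maximizer for $\Lambda_{\scriptl^0}$, so $\Lambda_{\scriptl^0}(\bE^0)=\Theta_{\scriptl^0}(\be^0)$. Because the dilations $D_t$ act by symmetries of every $\Lambda_{\scriptl_\nu}$ and preserve $\dagger$--symmetry, it suffices to show that every subsequence of $(\bE_\nu)$ admits a further subsequence $(\bE_{\nu_k})$ and parameters $t_{\nu_k}>0$ with $|D_{t_{\nu_k}}E_{\nu_k,i}\symdif E^0_i|\to0$ for each $i\in\scripti$: the quantity $\inf_{t>0}\max_{i}|D_tE_{\nu,i}\symdif E^0_i|$ then tends to $0$, and choosing $t_\nu$ nearly attaining it yields the lemma.

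So fix a subsequence, still denoted $(\bE_\nu)$. By Lemma~\ref{lemma:continuityWRTL}, $\Theta_{\scriptl_\nu}(\be_\nu)\to\Theta_{\scriptl^0}(\be^0)>0$, so $(\bE_\nu)$ is an asymptotically maximizing sequence for $(\scriptl_\nu,\be_\nu)$ and $\Lambda_{\scriptl_\nu}(\bE_\nu)$ is bounded below for large $\nu$. By Corollary~\ref{cor:centralsquare}, after replacing $\bE_\nu$ by a dilate $D_{r_\nu}\bE_\nu$ I may assume a fixed cube $Q_\rho=[-\rho,\rho]^2$ is contained in every $E_{\nu,i}$. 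Then, exactly as in Theorem~\ref{thm:precompact} --- Banach--Alaoglu to extract weak $L^2$ limits, together with the Helly selection theorem applied to the nonincreasing profiles of $E_{\nu,i}\cap\{x>0,\,y>0\}$ --- pass to a further subsequence along which $\one_{E_{\nu,i}}\to\one_{E^\infty_i}$ weakly in $L^2$, where $\bE^\infty$ is $\dagger$--symmetric, contains $Q_\rho$ in each component, and has $|E^\infty_i|\le e^0_i$.

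Next I would carry out the cross-term analysis of Lemma~\ref{lemma:claimmain'}. Its bounds are uniform in $\nu$, since $\scriptl_\nu\to\scriptl^0$ and the constants in Lemma~\ref{lemma:rectangle2} and Sublemma~\ref{lemma:simplest4bound} are stable under small perturbations of $\scriptl$; this gives $\Lambda_{\scriptl_\nu}(\bE_\nu)=\Lambda_{\scriptl_\nu}(\bE_\nu\cap\bE^\infty)+\Lambda_{\scriptl_\nu}(\bE_\nu\setminus\bE^\infty)+o(1)$, while $\Lambda_{\scriptl_\nu}(\bE_\nu\cap\bE^\infty)\to\Lambda_{\scriptl^0}(\bE^\infty)$ because $|(E_{\nu,i}\cap E^\infty_i)\symdif E^\infty_i|\to0$ by weak convergence and $\scriptl_\nu\to\scriptl^0$ (using \eqref{basicbound} as in the proof of Lemma~\ref{lemma:continuityWRTL}). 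Put $\mathbf c=\be^0-|\bE^\infty|=\lim_\nu|\bE_\nu\setminus\bE^\infty|\ge0$; the key claim is $\mathbf c=0$. If some component of $\mathbf c$ vanishes, the leftover term is $O(|E_{\nu,i}\setminus E^\infty_i|\cdot|E_{\nu,j}\setminus E^\infty_j|)\to0$ by the nondegeneracy bound $\Lambda\le C|E_i||E_j|$, so $\Lambda_{\scriptl^0}(\bE^\infty)=\Theta_{\scriptl^0}(\be^0)$; then $\Theta_{\scriptl^0}(|\bE^\infty|)=\Theta_{\scriptl^0}(\be^0)$ with $|\bE^\infty|<\be^0$ and $|\bE^\infty|\in(0,\infty)^4$ (as $Q_\rho\subset E^\infty_i$), contradicting admissibility of $(\scriptl^0,\be^0)$ --- strict admissibility implies admissibility. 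If instead $\mathbf c\in(0,\infty)^4$, the symmetrization inequality for $\scriptl_\nu$ gives $\Lambda_{\scriptl_\nu}(\bE_\nu\setminus\bE^\infty)\le\Theta_{\scriptl_\nu}(|\bE_\nu\setminus\bE^\infty|)$, hence by Lemma~\ref{lemma:continuityWRTL} $\lim_\nu\Lambda_{\scriptl_\nu}(\bE_\nu\setminus\bE^\infty)\le\Theta_{\scriptl^0}(\mathbf c)$, so $\Theta_{\scriptl^0}(\be^0)\le\Lambda_{\scriptl^0}(\bE^\infty)+\Theta_{\scriptl^0}(\mathbf c)\le\Theta_{\scriptl^0}(\be^0-\mathbf c)+\Theta_{\scriptl^0}(\mathbf c)$, contradicting the strict superadditivity of Corollary~\ref{cor:strictsublinearity}. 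Thus $\mathbf c=0$, so $|E_{\nu,i}\symdif E^\infty_i|\to0$ for each $i$ (since $|E_{\nu,i}|=e_{\nu,i}\to e^0_i=|E^\infty_i|$ and $|E_{\nu,i}\cap E^\infty_i|\to|E^\infty_i|$), the leftover term tends to $0$, and $\Lambda_{\scriptl^0}(\bE^\infty)=\Theta_{\scriptl^0}(\be^0)$: $\bE^\infty$ is a maximizer for $\Lambda_{\scriptl^0}$.

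It remains to identify $\bE^\infty$. By the rigidity theorem of \cite{christoneill} (applicable as $\scriptl^0$ is nondegenerate, satisfies the full symmetry hypothesis, and $(\scriptl^0,\be^0)$ is strictly admissible), every maximizer of mass $\be^0$ is a tuple of homothetic ellipses whose $4$--tuple of centers lies in the orbit of $0$ under the translation group. Imposing $\bE^\infty=(\bE^\infty)^\dagger$ forces each $E^\infty_i$ to be centered at the origin with axes parallel to the coordinate axes, and homothety forces a common axis ratio; an origin-centered axis-parallel ellipse of area $\pi r^2$ with axis ratio $\lambda^2$ is exactly $D_\lambda B_r$, so $\bE^\infty=D_\lambda\bE^0$ for some $\lambda>0$. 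Hence $|E_{\nu,i}\symdif D_\lambda E^0_i|\to0$, i.e.\ $|D_{\lambda^{-1}r_\nu}E_{\nu,i}\symdif E^0_i|\to0$ (dilations preserve the Lebesgue measure of symmetric differences), which is the required conclusion along this sub-subsequence. I expect the main obstacle to be the no-escape-of-mass step: one must combine the uniform-in-$\scriptl$ cross-term estimates, the continuity of $\Theta$ in $\scriptl$ from Lemma~\ref{lemma:continuityWRTL}, and the strict superadditivity of $\Theta_{\scriptl^0}$ from Corollary~\ref{cor:strictsublinearity} to exclude $\mathbf c\neq0$, and one must verify that strict admissibility of $(\scriptl^0,\be^0)$ indeed supplies the plain admissibility on which these tools rely.
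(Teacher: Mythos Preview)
Your proof is correct and follows essentially the same route as the paper: dilate to trap a common cube, rerun the compactness/cross-term machinery of Theorem~\ref{thm:precompact} (with the bounds of Lemma~\ref{lemma:rectangle2} uniform as $\scriptl_\nu\to\scriptl^0$), and rule out escape of mass via Corollary~\ref{cor:strictsublinearity}. The paper's own proof is very terse --- it simply says ``apply the proof of Theorem~\ref{thm:precompact}'' and invokes Corollary~\ref{cor:strictsublinearity} to force $\be^0_1=\be^0$ --- and in particular it does not spell out the final identification of the limiting $\dagger$--symmetric maximizer with a dilate of the tuple of balls $\bE^0$; you supply this step explicitly via the rigidity theorem of \cite{christoneill}, which is the right tool. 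One cosmetic point: in your case split you should say ``some but not all components of $\mathbf c$ vanish'' to obtain $|\bE^\infty|<\be^0$; if all components vanish you are already done.
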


\begin{proof}
Choose $t_\nu$ so that there exists $\rho>0$ for which
$[-\rho,\rho]^2\subset D_{t_\nu} \bE_\nu$ for all sufficiently large $\nu$.
Assume henceforth that $\nu$ is indeed large,
and replace $\bE_\nu$ by $D_{t_\nu} \bE_\nu$.

Apply the proof of Theorem~\ref{thm:precompact}.
The quantity $\be^0_1$ constructed in that proof must be equal to $\be^0$,
for otherwise a contradiction would be reached immediately using 
Corollary~\ref{cor:strictsublinearity}.
Therefore $\be_{\nu,1}\to 0$.
\end{proof}


While the two notions of strict admissibility on the one hand,
and admissibility on the other, are defined
somewhat differently, they are related. If $\scriptl^0$ satisfies 
the full symmetry hypothesis of Definition~\ref{defn:fullRBLL}, and 
if $(\scriptl^0,\be^0)$ is strictly admissible,
then  $(\scriptl^0,\be^0)$ is admissible
in the sense of Definition~\ref{admissible}.

\begin{lemma}
Let $(\scriptl^0,\be^0)$ be nondegenerate, satisfy the full symmetry
hypothesis \eqref{RBLLsymmetry}, and be strictly admissible.
Then $(\scriptl,\be)$ is admissible whenever $\scriptl$ satisfies 
the partial symmetry hypothesis \eqref{PSH}
and $(\scriptl,\be)$ is sufficiently close to $(\scriptl^0,\be^0)$.
\end{lemma}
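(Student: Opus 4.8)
The plan is to argue by contradiction, using the two preceding lemmas as the main inputs: the continuity of $\Theta$ in $(\scriptl,\be)$ (Lemma~\ref{lemma:continuityWRTL}) and the qualitative stability of $\dagger$-symmetric near-maximizers. So suppose the statement fails: there is a sequence $(\scriptl_\nu,\be_\nu)\to(\scriptl^0,\be^0)$ with each $\scriptl_\nu$ satisfying \eqref{PSH} and (since nondegeneracy is open) nondegenerate, for which $(\scriptl_\nu,\be_\nu)$ is not admissible. I would first do bookkeeping: choose $\be'_\nu<\be_\nu$ with $\Theta_{\scriptl_\nu}(\be'_\nu)=\Theta_{\scriptl_\nu}(\be_\nu)$, and use that $\Theta$ is nondecreasing for the partial order on $(0,\infty)^4$ (enlarging component sets cannot decrease $\Lambda$) to replace $\be'_\nu$ by a tuple agreeing with $\be_\nu$ in three coordinates and strictly smaller in the fourth; passing to a subsequence this coordinate may be taken to be the first, and I set $\delta_\nu=(\be_\nu)_1-(\be'_\nu)_1>0$. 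Passing to a further subsequence, either $\delta_\nu\to\delta>0$ or $\delta_\nu\to0$.

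The case $\delta_\nu\to\delta>0$ is easy and I would dispatch it first. Here $\be'_\nu$ converges componentwise to $\be'$ with $(\be')_1=(\be^0)_1-\delta\ge0$ and $(\be')_j=(\be^0)_j$ for $j\ge2$. One rules out $(\be')_1=0$, since then $\Theta_{\scriptl_\nu}(\be'_\nu)=\Theta_{\scriptl_\nu}(\be_\nu)\to\Theta_{\scriptl^0}(\be^0)>0$ by Lemma~\ref{lemma:continuityWRTL} (positivity because strict admissibility forces $\Lambda_{\scriptl^0}(\bE^0)>0$), whereas the nondegeneracy bound $\Theta_{\scriptl_\nu}(\be'_\nu)\le C(\be'_\nu)_1(\be'_\nu)_2$, with $C$ uniform for $\scriptl_\nu$ near $\scriptl^0$, forces $\Theta_{\scriptl_\nu}(\be'_\nu)\to0$. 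Hence $\be'\in(0,\infty)^4$, and Lemma~\ref{lemma:continuityWRTL} gives $\Theta_{\scriptl^0}(\be')=\lim_\nu\Theta_{\scriptl_\nu}(\be'_\nu)=\lim_\nu\Theta_{\scriptl_\nu}(\be_\nu)=\Theta_{\scriptl^0}(\be^0)$; since $\be'<\be^0$, this contradicts the admissibility of $(\scriptl^0,\be^0)$, which was noted above to follow from strict admissibility.

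The substantial case is $\delta_\nu\to0$. The plan is: take a $\dagger$-symmetric maximizer $\bE'_\nu$ for $(\scriptl_\nu,\be'_\nu)$ (Theorem~\ref{thm:existence}, then replace by $(\bE'_\nu)^\dagger$); observe $\Lambda_{\scriptl_\nu}(\bE'_\nu)=\Theta_{\scriptl_\nu}(\be'_\nu)=\Theta_{\scriptl_\nu}(\be_\nu)\to\Theta_{\scriptl^0}(\be^0)$ and $\be'_\nu\to\be^0$; apply the preceding stability lemma to get, after a dilation (which preserves maximality and measures), $|E'_{\nu,j}\symdif E^0_j|\to0$ for each $j\in\scripti$, where $\bE^0$ is the tuple of origin-centered balls with $|\bE^0|=\be^0$. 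Next, letting $K_{1,\nu}$ be the kernel of \eqref{Kidefn} associated to $\scriptl_\nu$ and $(E'_{\nu,2},E'_{\nu,3},E'_{\nu,4})$ and $K_1^0$ that for $\scriptl^0$ and $(E^0_2,E^0_3,E^0_4)$, I would prove $K_{1,\nu}\to K_1^0$ uniformly on compact subsets of $\reals^2$, using the representation \eqref{Kiintegral}--\eqref{Kiintegralform}, the estimate $\ell_{j,1}-\ell^0_{j,1}=O(\norm{\scriptl-\scriptl^0})$, the nondegeneracy of $\{\ell_{j,1}:j\ne1\}$, and $|E'_{\nu,j}\symdif E^0_j|\to0$. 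Strict admissibility supplies an open set $U^0$ in the exterior of the closed ball $E^0_1$ with $|U^0|>0$, $|U^0\cap E^0_1|=0$, and $K_1^0\ge c_0>0$ on $U^0$ (using continuity of $K_1^0$, which is the measure of an intersection of balls whose centers depend continuously on the argument), hence $K_{1,\nu}\ge c_0/2$ on $U^0$ for large $\nu$. Since $|U^0\cap E'_{\nu,1}|\le|E'_{\nu,1}\symdif E^0_1|+|U^0\cap E^0_1|\to0$ and $\delta_\nu\to0$, for large $\nu$ I pick $U_\nu\subset U^0\setminus E'_{\nu,1}$ with $|U_\nu|=\delta_\nu$ and set $\bE_\nu=(E'_{\nu,1}\cup U_\nu,\,E'_{\nu,2},\,E'_{\nu,3},\,E'_{\nu,4})$; then $|\bE_\nu|=\be_\nu$ and, by multilinearity of $\Lambda$ together with \eqref{Kidefn},
\[
\Theta_{\scriptl_\nu}(\be_\nu)\ \ge\ \Lambda_{\scriptl_\nu}(\bE_\nu)\ =\ \Lambda_{\scriptl_\nu}(\bE'_\nu)+\int_{U_\nu}K_{1,\nu}\ \ge\ \Theta_{\scriptl_\nu}(\be'_\nu)+\tfrac{c_0}{2}\delta_\nu\ >\ \Theta_{\scriptl_\nu}(\be'_\nu),
\]
contradicting $\Theta_{\scriptl_\nu}(\be_\nu)=\Theta_{\scriptl_\nu}(\be'_\nu)$, and completing the proof.

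The main obstacle is precisely this last case, and within it the passage from $\scriptl^0$ to the perturbed data: the strict-admissibility hypothesis is information only about $\bE^0$ and $\scriptl^0$, encoded in the positivity of $K_1^0$ near $\partial E^0_1$, and the only available bridge is the preceding, purely qualitative, stability lemma. Everything hinges on upgrading its $L^1$-convergence of symmetrized maximizers, together with $\scriptl_\nu\to\scriptl^0$, to locally uniform convergence $K_{1,\nu}\to K_1^0$, so that the sign of the gain obtained by enlarging $E'_{\nu,1}$ into $U^0$ is controlled uniformly in $\nu$.
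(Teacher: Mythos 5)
Your proposal is correct and follows essentially the same route as the paper: reduce to an admissible $\be'_\nu\le\be_\nu$ with equal $\Theta$, show $\be'_\nu\to\be^0$ via the continuity lemma and the nondegeneracy bound, pass to symmetrized maximizers converging to the tuple of balls, upgrade to uniform convergence $K_{\nu,i}\to K_i^0$, and use strict admissibility to enlarge $E'_{\nu,i}$ in a region where $K_{\nu,i}$ is bounded below, contradicting $\Theta_{\scriptl_\nu}(\be'_\nu)=\Theta_{\scriptl_\nu}(\be_\nu)$. The only differences are organizational (your two-case split on $\delta_\nu$ versus the paper's direct proof that $\be'_\nu\to\be^0$), not substantive.
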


\begin{proof}
Consider any sequence $(\scriptl_\nu,\be_\nu)$ that satisfies
the hypotheses and tends to $(\scriptl^0,\be^0)$ as $\nu\to\infty$.
There exists $\be'_\nu\le\be_\nu$
such that $(\scriptl_\nu,\be'_\nu)$ is admissible
and satisfies $\Theta_{\scriptl_\nu}(\be'_\nu)
= \Theta_{\scriptl_\nu}(\be_\nu)$.

In this situation, $\be'_\nu\to\be^0$. 
Indeed, suppose instead that after passing to a subsequence,
$\be'_\nu\to \be'<\be^0$.
Each component of $\be'$ is strictly positive,
since $|\Theta_{\scriptl_\nu}(\be'_\nu)|$ is uniformly minorized by 
a positive quantity, and
is majorized by a constant multiple of the product of the
two smallest components of $\be'_\nu$, uniformly in $\nu$.
By Lemma~\ref{lemma:continuityWRTL}, 
\[
\Theta_{\scriptl^0}(\be) 
= \lim_{\nu\to\infty}\Theta_{\scriptl_\nu}(\be'_\nu)
= \lim_{\nu\to\infty}\Theta_{\scriptl_\nu}(\be_\nu)
= \Theta_{\scriptl^0}(\be^0).\] 

This forces $\be'= \be^0$, that is, $\be'_\nu\to\be^0$.
Indeed, the function $\tilde\be\mapsto\Theta_{\scriptl^0}(\tilde\be)$
is nondecreasing. Since $(\scriptl^0,\be^0)$
is strictly admissible, this function is strictly
increasing in a neighborhood of $\be^0$.
That is, if $\be'<\be''$ and both are close to $\be^0$,
then $\Theta_{\scriptl^0}(\be')<\Theta_{\scriptl^0}(\be'')$.
Therefore if $\be'<\be^0$ then $\Theta_{\scriptl^0}(\be')
<\Theta_{\scriptl^0}(\be^0)$, contradicting the conclusion
of the preceding paragraph.

Let $\bE_\nu$ satisfy $|\bE_\nu|=\be'_\nu$,
$\bE_\nu=\bE_\nu^\dagger$, and $
\Lambda_{\scriptl_\nu}(\bE_\nu)=
\Theta_{\scriptl_\nu}(\be'_\nu)=\Theta_{\scriptl_\nu}(\be_\nu)$.
By replacing $\bE_\nu$ by $D_{t_\nu}(\bE_\nu)$
for appropriate parameters $t_\nu\in(0,\infty)$
we may assume that the associated component sets 
satisfy $|E_{\nu,i}\symdif E^0_{\nu,i}|\to 0$
as $\nu\to\infty$ for each $i\in\scripti$.

It follows that the associated functions $K_i^0,K_{\nu,i}$
satisfy $K_{\nu,i}\to K^0_i$ in $C^0(\reals^2)$ norm.
A consequence of strict admissibility is that
$K_i^0$ is bounded below by a strictly positive quantity
in a neighborhood of the closure of $E_i^0$. 
Therefore $K_{\nu,i}$ is also bounded below by a strictly positive
quantity in a subset of $\reals^2\setminus E_{\nu,i}$
whose Lebesgue measure is also bounded below by a strictly
positive quantity, uniformly in $\nu$.
Write $\be'_\nu = (e'_{\nu,i}: i\in\scripti)$.
Let $r>e'_{\nu,i}$ be close to $e'_{\nu,i}$.
Define $\tilde \be=(\tilde e_k: k\in\scripti)$ by $\tilde e_j=e'_{\nu,j}$
for $j\ne i$ and $\tilde e_i=r$.
Construct $\tilde\bE$ satisfying $|\tilde\bE|=\tilde\be$
with $\tilde E_j=E_{\nu,j}$ for $j\ne i$,
$\tilde E_i\supset E_{\nu,i}$,
and $K_{\nu,i}$ strictly positive on $\tilde E_i\setminus E_{\nu,i}$.
Then 
\[ \Theta_{\scriptl_\nu}(\tilde\be) 
\ge \Lambda_{\scriptl_\nu}(\tilde\bE)
= \int_{\tilde E_i} K_{\nu,i}
> \int_{E_{\nu,i}} K_{\nu,i}
= \Lambda_{\scriptl_\nu}(\bE_\nu)
= \Theta_{\scriptl_\nu}(\be'_\nu).\] 
This holds for every $r>e'_{\nu,i}$, for each $i\in\scripti$.
Since $\be'_\nu<\be_\nu$, this forces $\Theta_{\scriptl_\nu}(\be'_\nu)<
\Theta_{\scriptl_\nu}(\be_\nu)$, which is a contradiction.
Therefore $(\scriptl_\nu,\be_\nu)$ is admissible for every sufficiently large $\nu$.
\end{proof}


\section{Convexity and regularity of maximizers}\label{perturb}

The set of all $4$ tuples $\scriptl=(L_j: j\in\scripti)$
of linear mappings $L_j:\reals^4\to\reals^2$
is a finite-dimensional vector space.
Choose any norm $\norm{\cdot}$ for this space, and fix it for the remainder of the
analysis. This allows us to quantify the difference between $\scriptl$ and $\scriptl^0$
as $\norm{\scriptl-\scriptl^0}$.

We say that a set $E$ is a superlevel set of a function $K\ge 0$ if
there exists $t>0$ such that either
$\big|E\symdif\{x: K(x)>t\}\big|=0$,
or $\big|E\symdif\{x: K(x) \ge t\}\big|=0$.
Let $K$ and $m>0$ be given, and
suppose that
$0<m < |\{x: K(x)>0\}|$,
and that $|\{x: K(x)=t\}|=0$, where $t>0$ is specified by
\[ |\{x: K(x)>t\}|\le m \le |\{x: K(x)\ge t\}.\]
Then a set $E$ maximizes the quantity $\int_{E}K$ 
among all sets of Lebesgue measures $m=|E|$
if and only if $E$ is a superlevel set of $K$. 

Throughout section~\ref{perturb},
$\scriptl^0$ is assumed to be nondegenerate and to satisfy 
the full symmetry hypothesis \eqref{RBLLsymmetry}, and
$(\scriptl^0,\be^0)$ is assumed to be strictly admissible.
It is assumed that
$\scriptl$ is nondegenerate and satisfies 
the partial symmetry hypothesis \eqref{PSH},
that $(\scriptl,\be)$ is admissible,
and that $(\scriptl,\be)$ is a small perturbation of $(\scriptl^0,\be^0)$.

Let $\bE=\{E_j: i\ne j\in\set4\}$ be fixed and satisfy $|E_j|=e_j$.
Let the functions $K_i$ be defined in terms of the sets $E_j$
by \eqref{Kidefn},\eqref{Kiintegral}.
We will prove Theorem~\ref{thm:bootstrap} via
a bootstrapping argument, in which properties of $\{E_i: i\in\set4\}$
are used to deduce information concerning $\{K_i: i\in\set4\}$, leading in turn to stronger
properties of $\{E_i: i\in\set4\}$. 
Several iterations lead to the desired conclusions.
The most involved step is Lemma~\ref{lemma:lipschitz} below,
which establishes smallness of $K_i-K_i^0$ in the Lipschitz norm,
leading to the conclusion that $E_i$ is a Lipschitz domain
whose boundary is close to that of $E_i^0$ in the Lipschitz sense.

\medskip
As was proved in \cite{christoneill},
since $(\scriptl^0,\be^0)$ is strictly admissible,
if $\bE^0$ satisfies $|\bE^0|=\be^0$
then $\bE^0$ maximizes $\Lambda^0$ if and only if 
$\bE^0$ is a $4$--tuple of homothetic ellipses, 
whose centers form a $4$--tuple that belongs to the orbit
of $(0,0,0,0)$ under the translation symmetry group.
Among these tuples are those with sides parallel to the axes and with each $E_j^0$
centered at $0\in\reals^2$. 
These special maximizers of $\Lambda^0$ naturally play a distinguished role in the analysis. 

According to Theorem~\ref{thm:precompact}, 
if $(\scriptl^0,\be)$ is strictly admissible,
if $(\scriptl,\be)$ is admissible and 
$\scriptl$ is sufficiently close to $\scriptl^0$,
and if $\bE$ is a maximizer for $\Lambda_\scriptl$ satisfying $\bE=\bEdagger$ and $|\bE|=\be$, 
then there exists $t$ such that $D_t\bE$ is close to a special maximizer $\bE^0$
of $\Lambda^0$, in the sense that
\[|D_tE_j\symdif E_j^0|<\eps \text{ for each $j\in\set4$,} \]
where $\eps\to 0$ as $\norm{\scriptl-\scriptl^0}\to 0$. 
We will prove the conclusions of Theorem~\ref{thm:bootstrap} for a dilate $D_t$ of $\bE$ chosen so that for each $j\in\set4$, $|D_tE_j\Delta E_j^0|<\eps$ where $E_j^0$ is a ball centered at the origin. 
Assume henceforth that $\bE^0$ is a $4$--tuple of balls centered at the origin.

By $o_\delta(1)$ we mean a quantity that depends on $\scriptl^0$
and on $\be$, but tends to $0$ as $\delta\to 0$
provided that $(\scriptl^0,\be)$ remains fixed,
or more generally, remains inside a compact subset of the set of
all $(\scriptl^0,\be)$ that satisfy our hypotheses.

\begin{lemma}
Let $\scriptl^0$ be nondegenerate and satisfy the full symmetry hypothesis
of Definition~\ref{defn:fullRBLL}.
Let $(\scriptl^0,\be)$ be strictly admissible and generic.
Let $\bE^0$ be the $4$--tuple of balls in $\reals^2$ centered at the origin
satisfying $|\bE^0|=\be$. For each $i\in\set4$ there exists a neighborhood of
$\partial E_i^0$ in which $K_i^0$ is $C^\infty$ and has nowhere vanishing gradient.
\end{lemma}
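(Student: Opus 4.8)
The plan is to combine the rotational symmetry of $K_i^0$ with its description, recalled in the discussion preceding Definition~\ref{genericity}, as the area of an intersection of three disks, and then to use the implicit function theorem to see that this area is a smooth function of the radial variable once the combinatorial type of the intersection has been fixed --- which the genericity hypothesis guarantees to be locally constant near $\partial E_i^0$. Concretely: since $\ell_{j,i}^0$ commutes with the diagonal action of $\sl(2)$, and $\sl(2)$ acts irreducibly on $\reals^2$, each $\ell_{j,i}^0$ has the form $\ell_{j,i}^0(u,v)=a_j u+b_j v$ with $a_j,b_j\in\reals$, so the set $\tilde E_j(w)=\{v:\ell_{j,i}^0(w,v)\in E_j^0\}$ is, when $b_j\ne 0$, a closed disk whose radius is independent of $w$ and whose center depends linearly on $w$. (If $b_j=0$, then $\tilde E_j(w)$ is all of $\reals^2$ in a neighborhood of $\partial E_i^0$, by strict admissibility, and the argument below only simplifies.) Writing $\scripti=\{i,j,k,l\}$, formula \eqref{Kiintegral} gives $K_i^0(u)=\big|\tilde E_j(u)\cap\tilde E_k(u)\cap\tilde E_l(u)\big|$; and since each $E_m^0$ is a ball centered at $0$ while $\ell_{m,i}^0$ commutes with $SO(2)$, the substitution $v\mapsto Rv$ with $R\in SO(2)$ in \eqref{Kiintegral} shows that $K_i^0$ is radial, $K_i^0(u)=\kappa_i(|u|)$ for some function $\kappa_i$ on $(0,\infty)$. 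Thus $\partial E_i^0=\{\,|u|=u_0\,\}$ for a unique $u_0>0$, and it suffices to prove that $\kappa_i$ is $C^\infty$ on an interval about $u_0$ with $\kappa_i'(u_0)\ne 0$: then $K_i^0(u)=\kappa_i(|u|)$ is $C^\infty$ on the annular neighborhood $\{\,u_0-\eta<|u|<u_0+\eta\,\}$ of $\partial E_i^0$, and there its gradient $\kappa_i'(|u|)\,u/|u|$ is nonvanishing.

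Now fix a unit vector $e$ and let $\Omega(\rho)$ be the intersection of the three disks with centers $c_m\rho e$ and radii $r_m$ for $m\in\{j,k,l\}$, so that $\kappa_i(\rho)=|\Omega(\rho)|$. By strict admissibility $\kappa_i(u_0)=K_i^0(u_0,0)>0$, so $\Omega(u_0)$ has nonempty interior; the genericity hypothesis (Definition~\ref{genericity}) then asserts that $\partial\Omega(u_0)$ is a simple closed curve built from finitely many circular arcs --- two arcs in case (i), in which the lens $\tilde E_j(u_0)\cap\tilde E_k(u_0)$ lies in the interior of the third disk, which is otherwise irrelevant; four arcs in case (ii) --- with consecutive arcs meeting transversally at the vertices, and with no vertex lying on a circle other than the two that define it. Each condition appearing here is open in the parameter $\rho$: transversality of a pair of circles at an intersection point is the nonvanishing of a Jacobian determinant; containment of a compact lens in an open disk is an open condition; and the properties that a given vertex lies strictly inside the remaining disk, or that two circles fail to meet, or meet outside the remaining disk, are all stable under sufficiently small perturbations. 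Hence there is $\eta>0$ such that for $|\rho-u_0|<\eta$ the region $\Omega(\rho)$ has the same combinatorial type, with the same pattern of incidences of arcs with circles.

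For such $\rho$ one computes $|\Omega(\rho)|=\tfrac12\oint_{\partial\Omega(\rho)}(x\,dy-y\,dx)$ by Green's theorem, as a sum over the finitely many arcs. A subarc $\theta\in[\theta_1,\theta_2]$ of the circle of center $(c^1,c^2)$ and radius $r$ contributes $\tfrac12\big(c^1 r(\sin\theta_2-\sin\theta_1)-c^2 r(\cos\theta_2-\cos\theta_1)+r^2(\theta_2-\theta_1)\big)$, which is $C^\infty$, indeed real-analytic, in $(c^1,c^2,r,\theta_1,\theta_2)$. The centers $c_m\rho e$ are affine in $\rho$ and the radii $r_m$ are constant, while the angular coordinates of the vertices --- each the transversal intersection point of two of the circles --- are real-analytic functions of $\rho$ on $|\rho-u_0|<\eta$ by the implicit function theorem, the transversality being precisely the nondegeneracy that is needed. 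Therefore $\kappa_i$ is $C^\infty$ on $(u_0-\eta,u_0+\eta)$. Finally, strict admissibility gives $\tfrac{d}{du}K_i^0(u_0^-,0)=\kappa_i'(u_0^-)<0$, and since $\kappa_i$ is now known to be $C^1$ at $u_0$, this one-sided derivative equals $\kappa_i'(u_0)$; hence $\kappa_i'(u_0)<0$, and after shrinking $\eta$ we obtain $\kappa_i'\ne 0$ throughout $(u_0-\eta,u_0+\eta)$. Together with the reduction made in the first paragraph this proves the lemma; the argument is uniform in $i\in\set4$, by symmetry of the roles of the indices.

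The step I expect to be the main obstacle --- and the only one where the genericity hypothesis is genuinely used --- is the claim that the combinatorial type of $\Omega(\rho)$ is locally constant near $\rho=u_0$: one must verify that every incidence listed in Definition~\ref{genericity} is an open condition in $\rho$, and that no new incidence --- a vertex crossing a third circle, or a tangency of two circles --- can appear for nearby $\rho$. This is exactly the unstable configuration excluded in the remark following Definition~\ref{genericity}; once it has been ruled out, the area formula furnished by Green's theorem is manifestly smooth, and the nonvanishing of $\nabla K_i^0$ then follows immediately from the one-sided derivative supplied by strict admissibility.
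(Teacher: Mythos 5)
Your proof is correct, and its skeleton coincides with the paper's: reduce by radial symmetry to the single radial variable, realize $K_i^0(u)$ as the area of an intersection of three disks whose centers move linearly in $u$ and whose radii are fixed, invoke genericity to show the combinatorial type of the intersection is locally constant near $\partial E_i^0$, and conclude smoothness. Where you diverge is in the two places the paper is terse. First, the paper simply asserts that the area of the threefold intersection is ``a $C^\infty$ function of $u$'' as an ``elementary consequence''; you supply the actual mechanism (Green's theorem applied arc by arc, with the vertex angles real-analytic in $\rho$ by the implicit function theorem at the transversal intersections), which is a genuine and welcome filling-in of detail. Second, for the nonvanishing of the gradient the paper re-derives strict negativity of $\kappa_i'$ from the geometry --- the three centers separate at pairwise distinct speeds $\rho_j<0=\rho_l<\rho_k$, forcing the area to decrease --- whereas you instead quote the clause $\frac{d}{du}K_i^0(u^-,0)<0$ built into the definition of strict admissibility and upgrade the one-sided derivative to $\kappa_i'(u_0)$ using the $C^1$ regularity you have just established. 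Your route is logically sound and shorter; the paper's buys an independent verification that the hypothesis $\frac{d}{du}K_i^0(u^-,0)<0$ is consistent with (indeed forced by) the geometry, but that extra information is not needed for the lemma as stated. Your identification of the local constancy of combinatorial type as the one place genericity is used, and your treatment of the degenerate case $b_j=0$, are both correct.
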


\begin{proof} 
Since $E_j^0$ are balls centered at the origin for all $j\ne i$,
and since the diagonal action of $O(2)$ on $(\reals^2)^4$ defines a symmetry of $\Lambda_{\scriptl^0}$,
$K_i^0$ is a radially symmetric function. Thus it suffices to analyze its gradient at the unique
point in $\partial E_i^0$ of the form $(\bar u,0)$ with $\bar u>0$.

Consider $u$ in a small neighborhood of $\bar u$.
$K_i^0(u,0)$ is the Lebesgue measure of 
\[ \underset{i\ne j\in\set4}\bigcap\, \tilde E_j^0(u,0)
= \underset{i\ne j\in\set4}\bigcap\, (\tilde E_j^0 + (\rho_j u,0))\]
where the three real coefficients $\rho_j$ 
are determined by the mappings $\ell_{j,i}$ defined in \eqref{Kiintegral}
and are pairwise distinct, and $\tilde E_j^0=\tilde E_j^0(0,0)$
is a closed ball centered at the origin. Write $\{j\in\set4: j\ne i\}$
as $\{j,k,l\}$, with the indices labeled so that $\rho_j<\rho_l<\rho_k$.
By making a $u$--dependent translation change of variables in the integral
over $\reals^2$ that defined $K_i^0(u,0)$ we may reduce to the case
in which $\rho_l=0$. Then $\tilde E_l^0(u,0)=\tilde E_l^0$ is a closed
ball centered at the origin in $\reals^2$.

According to the strict admissibility and genericity
hypotheses, either $\tilde E_j^0(\bar u,0)\cap\tilde E_k^0(\bar u,0)$ is contained in
the interior of $\tilde E_l^0$,
or the threefold intersection of these three balls is a convex domain bounded
by circular arcs of positive lengths that meet transversely, with $2$ of these
arcs being subarcs of $\tilde E_n^0(\bar u,0)$ for each $n\in\{j,k,l\}$.
In either case, it is an elementary consequence of the inequality
$\rho_j<0=\rho_l<\rho_k$ that in a neighborhood of $\bar u$, the volume of the threefold intersection
is a $C^\infty$ function of $u$ with strictly negative derivative.\footnote{See also \eqref{C1formula} below.}
\end{proof}

The tuples $\bE=(E_i: i\in\scripti)$ depend on $(\scriptl,\be)$,
but this dependence is not indicated in our notations.

\begin{lemma}
Let $\delta_0>0$ be a sufficiently small constant,
depending only on $\scriptl^0,\be$.
Let $\norm{\scriptl-\scriptl^0}\le\delta\le\delta_0$.
For each $i\in\set4$, $E_i$ is a bounded set whose
diameter is bounded above, uniformly in $\scriptl$.
The Hausdorff distance from $\partial E_i$ to $\partial E_i^0$
is $o_\delta(1)$.
\end{lemma}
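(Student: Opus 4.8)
The plan is to combine the characterization of each component $E_i$ of a maximizer as a superlevel set of the associated function $K_i$ with the $C^0$--closeness of $K_i$ to the radial function $K_i^0$. Throughout, $o_\delta(1)$ has the meaning fixed above. Since $\bE$ maximizes $\Lambda_\scriptl$ and $|\bE|=\be$, each $E_i$ maximizes $\int_E K_i$ over sets of measure $e_i$; hence there is a threshold $t_i\ge0$, determined by $|\{K_i>t_i\}|\le e_i\le|\{K_i\ge t_i\}|$, with $\{K_i>t_i\}\subseteq E_i\subseteq\{K_i\ge t_i\}$ modulo null sets. From the representation \eqref{Kiintegral}--\eqref{Kiintegralform}, the nondegeneracy of $\{\ell_{j,i}:j\ne i\}$, the hypothesis $\norm{\scriptl-\scriptl^0}\le\delta$, and the $L^1$--bound $|E_j\symdif E_j^0|=o_\delta(1)$ supplied by Theorem~\ref{thm:precompact}, a routine telescoping change--of--variables estimate of the kind underlying \eqref{basicbound} and Sublemma~\ref{lemma:simplest4bound} gives $\norm{K_i-K_i^0}_{C^0(\reals^2)}=o_\delta(1)$; the same computation shows $\sup_{|u|>R}K_i(u)=o_\delta(1)$ for a constant $R=R(\scriptl^0,\be)$ beyond which the arrangement of the $\ell^0_{j,i}$ makes $\{v:\ell^0_{j,i}(u,v)\in E^0_j\ \forall j\ne i\}$ empty.

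Next I would pin down the threshold. Since $\scriptl^0$ has the full symmetry and the $E^0_j$ are balls centered at $0$, $K_i^0$ is radial; strict admissibility gives $t_i^0:=K_i^0|_{\partial E_i^0}>0$, and the preceding lemma gives $\nabla K_i^0\ne0$ on a neighborhood of $\partial E_i^0$. Hence $|\{K_i^0\ge t_i^0-\eta\}|\ge e_i+c\eta$ and $|\{K_i^0\ge t_i^0+\eta\}|\le e_i-c\eta$ for small $\eta>0$, with $c>0$. Plugging $\norm{K_i-K_i^0}_{C^0}=o_\delta(1)$ into the pinching $|\{K_i>t_i\}|\le e_i\le|\{K_i\ge t_i\}|$ then forces $t_i\to t_i^0$, so $t_i\ge t_i^0/2>0$ once $\delta$ is small. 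The diameter bound follows at once: for such $\delta$, $K_i<t_i^0/2\le t_i$ outside $B(0,R)$, whence $E_i\subseteq\{K_i\ge t_i\}\subseteq B(0,R)$ and $\operatorname{diam}E_i\le2R$, uniformly in $\scriptl$.

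For the Hausdorff estimate I would argue both inclusions. \emph{Outer:} by the superlevel property of $E_i^0$ together with $\nabla K_i^0\ne0$ near $\partial E_i^0$, one has $K_i^0\le t_i^0-c(\eps)$ on the outer collar $\{\eps\le\operatorname{dist}(\,\cdot\,,E_i^0)\le2\eps\}$ with $c(\eps)>0$; with $t_i\to t_i^0$ and $\norm{K_i-K_i^0}_{C^0}=o_\delta(1)$ this collar becomes disjoint from $E_i$ once $\delta<\delta(\eps)$, and since $E_i=E_i^\dagger$ is Steiner symmetric about both axes and contains the origin it cannot reach past the collar, so $E_i\subseteq\{\operatorname{dist}(\,\cdot\,,E_i^0)\le2\eps\}$; letting $\eps=\eps(\delta)\to0$ slowly gives $E_i\subseteq B(0,r_i^0+o_\delta(1))$, where $r_i^0$ is the radius of $E_i^0$. \emph{Inner:} from $E_i\subseteq B(0,r_i^0+\eps)$ one has $|B(0,r_i^0+\eps)\setminus E_i|=|B(0,r_i^0+\eps)|-e_i=O(\eps)$; if some point of $B(0,r_i^0-\eta)$ were absent from $E_i$ then, because $E_i$ is Steiner symmetric about both axes, a quarter--disk of radius $\gtrsim\eta$ around that point would also be absent, contradicting the deficit bound $O(\eps)$ unless $\eta\lesssim\sqrt\eps$; hence $E_i\supseteq B(0,r_i^0-C\sqrt\eps)$. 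Combining, $\partial E_i$ lies in an annulus of width $o_\delta(1)$ about $\partial E_i^0=\partial B(0,r_i^0)$, which is the claim. The hardest part is the lower bound $t_i\gtrsim1$: it is the one place where the merely $L^1$ control coming from precompactness must be upgraded using the geometry of $K_i^0$ at $\partial E_i^0$, and it is exactly here that strict admissibility and the nonvanishing--gradient lemma enter; a secondary, purely technical point is to work throughout with the Lebesgue density--one representative of each $E_i$, so that its topological and measure--theoretic boundaries agree.
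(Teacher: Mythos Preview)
Your proposal is correct and follows essentially the same strategy as the paper: identify each $E_i$ as a superlevel set of $K_i$, establish $\norm{K_i-K_i^0}_{C^0}=o_\delta(1)$ from the representation \eqref{Kiintegral} together with $|E_j\symdif E_j^0|=o_\delta(1)$, and then combine this with the nonvanishing of $\nabla K_i^0$ near $\partial E_i^0$ to control both the diameter and the Hausdorff distance. You supply considerably more detail than the paper does---in particular the explicit argument that the threshold $t_i$ converges to $t_i^0$, which the paper leaves implicit.

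The one place where your route differs from the paper's is the Hausdorff estimate. The paper argues directly from the superlevel set sandwich $\{K_i>t_i\}\subseteq E_i\subseteq\{K_i\ge t_i\}$: since $K_i^0<t_i^0$ on the \emph{entire} exterior of $E_i^0$ (not just on a collar), compactness and decay give $K_i^0\le t_i^0-c(\eps)$ on all of $\{\operatorname{dist}(\cdot,E_i^0)\ge\eps\}$, whence $E_i\subseteq\{\operatorname{dist}(\cdot,E_i^0)<\eps\}$ directly, and the inner inclusion follows analogously from $K_i^0>t_i^0+c(\eps)$ on $\{\operatorname{dist}(\cdot,\reals^2\setminus E_i^0)\ge\eps\}$. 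Your collar-plus-Steiner-symmetry argument for the outer inclusion and the measure-deficit argument for the inner inclusion are valid alternatives, but they are more circuitous; the Steiner symmetry of $\bE$ is not actually needed at this stage of the bootstrap.
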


\begin{proof}
It follows directly from \eqref{Kiintegral} and the nondegeneracy
hypothesis that $K_i$ is continuous,
and that $K_i(u)\to 0$  as $|u|\to\infty$. The same holds for $K_i^0$.
Moreover, $\norm{K_i}_{C^0} \le C |E_k|\cdot|E_l|$
for any $k\ne l\in \scripti\setminus\{i\}$,
and
\[ \norm{K_i-K_i^0}_{C^0} \le C\max_{j\ne i} |E_j\symdif E_j^0|\]
where $C<\infty$ depends on $\scriptl^0,\be$
and on an upper bound for $\delta$.
Therefore $\norm{K_i-K_i^0}_{C^0} \le o_\delta(1)$.

The strict admissibility hypothesis implies that
each set $E_i^0$ is a superlevel set $\{x: K_i^0(x)\ge t_i>0\}$ of $K_i^0$,
and $\nabla K_i^0$ vanishes nowhere on the boundary of $E_i^0$.
Since $\norm{K_i-K_i^0}_{C^0}$ is small, and
since $\bE$ is a maximizing tuple, 
for each $i\in\set4$,
$E_i\subset\{u: K_i(u)>t_i-o_\delta(1)\}$ provided that
$\delta$ is sufficiently small.
Therefore the diameters of the sets $E_i$ are majorized
by an acceptable constant.

The nonvanishing of $\nabla K_i^0$ in a neighborhood of $\partial E_i^0$
and the smallness of $\norm{K_i-K_i^0}_{C^0}$ together imply 
smallness of the Hausdorff distance from the boundary of $E_i$
to the boundary of $E_i^0$. 
\end{proof}

\begin{lemma} \label{lemma:lipschitz}
If $\delta>0$ is sufficiently small
then each function $K_i$ is Lipschitz continuous,
with Lipschitz constants uniformly bounded above, 
for all $\scriptl$ satisfying $\norm{\scriptl-\scriptl^0}\le\delta$. 
\end{lemma}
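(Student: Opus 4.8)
The plan is to turn the assertion into a uniform bound on the perimeters of the sets $E_j$, from which the Lipschitz estimate follows immediately from a symmetric‑difference inequality.

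\emph{Reduction.} First I would rewrite $K_i$ geometrically. For each $j\ne i$, Definition~\ref{newnondegeneracy} forces the functionals $L_i^1,L_j^1$ (and $L_i^2,L_j^2$) to be independent, so in coordinates on $\R^4$ in which $L_i$ is the projection $(\bx,\by)\mapsto(x_1,y_1)=u$ and the fibre variable is $v=(x_2,y_2)$, the map $v\mapsto\ell_{j,i}(u,v)$ is an invertible affine map of $\R^2$ whose linear part $N_{j,i}$ is independent of $u$ and respects the product structure of \eqref{Kiintegralform}. Hence $\{v:\ell_{j,i}(u,v)\in E_j\}=N_{j,i}^{-1}E_j+c_{j,i}(u)$ is a rigid translate of the fixed bounded set $F_j:=N_{j,i}^{-1}E_j$ by a vector $c_{j,i}(u)$ linear in $u$, and, writing $\scripti\setminus\{i\}=\{j,k,l\}$ and composing with a measure‑preserving translation, $K_i(u)=\bigl|(F_j+\phi(u))\cap(F_k+\psi(u))\cap F_l\bigr|$ with $\phi,\psi$ linear in $u$; since $F_l$ is bounded the intersection is bounded, so $K_i(u)<\infty$. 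As $\scriptl$ ranges over $\norm{\scriptl-\scriptl^0}\le\delta$ the matrices $N_{j,i}$, hence the norms of $\phi,\psi$, stay in a fixed compact set.

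\emph{Symmetric‑difference inequality.} Writing $A(u)$ for the triple intersection, $|K_i(u)-K_i(u')|\le|A(u)\,\Delta\,A(u')|$, and since $F_l$ is common to $A(u)$ and $A(u')$, $A(u)\,\Delta\,A(u')$ is contained in $\bigl[(F_j+\phi(u))\,\Delta\,(F_j+\phi(u'))\bigr]\cup\bigl[(F_k+\psi(u))\,\Delta\,(F_k+\psi(u'))\bigr]$, whence
\[ |K_i(u)-K_i(u')|\le|F_j\,\Delta\,(F_j+\phi(u)-\phi(u'))|+|F_k\,\Delta\,(F_k+\psi(u)-\psi(u'))| . \]
For a set $F$ of finite perimeter one has $|F\,\Delta\,(F+h)|\le\operatorname{Per}(F)\,|h|$. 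So everything comes down to a bound, uniform in $\scriptl$, on $\operatorname{Per}(E_n)$ for each $n\in\scripti$ (then $\operatorname{Per}(F_n)=\operatorname{Per}(N_{n,i}^{-1}E_n)\le C\operatorname{Per}(E_n)$, with $C$ depending only on the compact range of $N_{n,i}$), since $|\phi(u)-\phi(u')|\le C|u-u'|$ and likewise for $\psi$.

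\emph{Uniform perimeter bound.} Here I would use $\bE=\bEdagger$. A set equal to its own double Steiner symmetrization is both horizontally and vertically Steiner symmetric, and an elementary argument shows that in each open quadrant such a set coincides with the subgraph $\{0<y<h(x)\}$ of a nonincreasing function $h$; its boundary in that quadrant is the graph of $h$ together with the vertical segments at the jumps of $h$, of total length at most (horizontal extent) $+$ (total variation of $h$) $\le 2\operatorname{diam}(E_n)$. By the preceding lemma $\operatorname{diam}(E_n)$ is bounded above uniformly in $\scriptl$, so $\operatorname{Per}(E_n)\le C\operatorname{diam}(E_n)\le C$ uniformly, and the three steps combine to give $|K_i(u)-K_i(u')|\le C|u-u'|$ with $C$ uniform in $\scriptl$.

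\emph{Where the difficulty lies.} For the statement as phrased the above is essentially the whole argument; the only real care is the bookkeeping in the reduction and the verification that every constant is uniform in $\scriptl$. The genuinely involved point — the one needed downstream to conclude that $\partial E_i$ is close to $\partial E_i^0$ in the Lipschitz sense, rather than merely Lipschitz — is the sharp version $\norm{K_i-K_i^0}_{\mathrm{Lip}}=o_\delta(1)$. Since the crude perimeter bound above is far from $\operatorname{Per}(E_n^0)$, the sharp statement forces one to show that $\operatorname{Per}(E_n)$ is in fact close to $\operatorname{Per}(E_n^0)$, using that $E_n$ is the superlevel set $\{K_n>t_n\}$, that $K_n$ is $C^0$‑close to the transversally strictly monotone (by strict admissibility) function $K_n^0$, and the transversality built into Definition~\ref{genericity}; this feedback must be run over all four indices at once and arranged so that it converges. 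I expect that bootstrap, rather than the plain Lipschitz bound, to be the substantive part of the proof.
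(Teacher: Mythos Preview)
Your proof is correct and follows essentially the same route as the paper's: both reduce the Lipschitz bound on $K_i$ to the symmetric-difference estimate $|E_j\symdif(E_j+w)|=O(|w|)$, derived from the double Steiner symmetry $\bE=\bEdagger$ together with the uniform diameter bound from the preceding lemma. The only cosmetic difference is that the paper argues that estimate directly (horizontal and vertical slices of $E_j$ are intervals, so a horizontal shift by $z$ costs at most $2|z|\cdot\operatorname{diam}E_j$, and likewise vertically), whereas you route the same fact through the perimeter inequality $|F\symdif(F+h)|\le\operatorname{Per}(F)\,|h|$.

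One caveat on your closing speculation: the sharp version $\norm{K_i-K_i^0}_{\mathrm{Lip}}=o_\delta(1)$ (Lemma~\ref{lemma:littleLip}) is \emph{not} obtained by showing $\operatorname{Per}(E_n)\to\operatorname{Per}(E_n^0)$; equal perimeters would only equalize the Lipschitz \emph{constants}, not make the difference $K_i-K_i^0$ small in Lipschitz norm. The paper instead writes $K_i(u)=|\Omega_i(u)|$ as a boundary line integral via Stokes' theorem and compares the contributions arc by arc with those for $K_i^0$, using that the arcs for $\bE$ and $\bE^0$ are $o_\delta(1)$-close in the Hausdorff metric and that the genericity hypothesis keeps the corner contributions under control.
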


For any $i\in\set4$, for each $j\ne i$ define $\tilde E_j$ to be the
inverse image of $E_j$ under the mapping $v\mapsto \ell_{j,i}(0,v)$.
Define $\tilde E_j^0$ in the corresponding way, in terms of $E_j^0$
and $\scriptl^0$.
$\tilde E_j$ could more properly be denoted by $\tilde E_{j,i}$,
but the simplified notation will be sufficiently unambiguous for our purpose.

\begin{proof}
\begin{align*}
|K_i(u)-K_i(u')|
&\le  \int_{\reals^2} \big|
\prod_{j\ne i} \one_{E_j}(\ell_{j,i}(u,v))
- \prod_{j\ne i} \one_{E_j}(\ell_{j,i}(u',v))
\big|
\,dv
\\ &
=  \int_{\reals^2} \big|
\prod_{j\ne i} \one_{E_j}(\ell_{j,i}((u-u'),v))
- \prod_{j\ne i} \one_{E_j}(\ell_{j,i}(0,v))
\big|
\,dv.
\end{align*}
As a function of $v\in\reals^2$,
$\one_{E_j}(\ell_{j,i}(u-u',v))$
and
$\one_{E_j}(\ell_{j,i}(0,v))$
are translates of $\tilde E_j$.
Moreover, they are translates by quantities whose difference
is a linear transformation of $u-u'$.
Therefore it suffices to verify that $|(E_j+w)\symdif E_j| = O(|w|)$
for $w\in\reals^2$.
Moreover, it suffices to verify this merely for $w\in\reals\times\{0\}$,
and for $w\in \{0\}\times\reals^2$.
Since the horizontal and vertical coordinates can be freely interchanged
in this theory, it suffices to treat $w=(z,0)\in\reals\times\{0\}$.
The bound 
$|(E_j+w)\symdif E_j| = O(|w|)$
is an immediate consequence of two properties of $E_j$:
the intersection of $E_j$ with any horizontal line in $\reals^2$
is an interval, and the diameter of $E_j$ is bounded above by an acceptable constant.
\end{proof}

\begin{lemma} \label{lemma:littleLip}
For each $i\in\set4$,
\[ \big|\, (K_i-K_i^0)(u) - (K_i-K_i^0)(u')\,\big| 
\le o_\delta(1)\cdot |u-u'| + O(|u-u'|^2)\]
for all $u,u'$ sufficiently close to $\partial(E_i^0)$.
\end{lemma}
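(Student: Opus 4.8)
The plan is to expand $K_i(u)-K_i(u')$ and $K_i^0(u)-K_i^0(u')$ by a common telescoping, to evaluate each resulting term to first order in $h:=u-u'$, and to match corresponding terms up to $o_\delta(1)|h|+O(|h|^2)$.

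First I would rewrite, using \eqref{Kiintegral}--\eqref{Kiintegralform}, $K_i(u)=\big|\bigcap_{j\ne i}(\tilde E_j-c_{j,i}(u))\big|$ and $K_i^0(u)=\big|\bigcap_{j\ne i}(\tilde E_j^0-c_{j,i}^0(u))\big|$, where $\tilde E_j$ is the inverse image of $E_j$ under the linear isomorphism $v\mapsto\ell_{j,i}(0,v)$, $\tilde E_j^0$ is the corresponding ball, and $c_{j,i},c_{j,i}^0\colon\reals^2\to\reals^2$ are linear with $c_{j,i}-c_{j,i}^0=O(\delta)$, hence $c_{j,i}(h)-c_{j,i}^0(h)=O(\delta|h|)$. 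I would record three facts. (a) Since $\bE=\bEdagger$, after this coordinate change each $\tilde E_j$ is the region lying between the two monotone graphs $x=\pm\tilde\mu_j(|y|)$ of a nonincreasing, compactly supported function $\tilde\mu_j$; in particular $\partial\tilde E_j$ is rectifiable of bounded length and $|(\tilde E_j+w)\symdif\tilde E_j|=O(|w|)$ uniformly, as in the proof of Lemma~\ref{lemma:lipschitz}. (b) By the preceding lemma and the boundedness of the isomorphism $v\mapsto\ell_{j,i}(0,v)$ and of its inverse, the Hausdorff distance from $\partial\tilde E_j$ to $\partial\tilde E_j^0$ is $o_\delta(1)$. (c) $\|K_i-K_i^0\|_{C^0}=o_\delta(1)$, while $K_i^0$ is $C^\infty$ with nonvanishing gradient near $\partial E_i^0$.

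Fixing $u,u'$ in a sufficiently small neighborhood of $\partial E_i^0$, setting $b_j=c_{j,i}(h)$, $b_j^0=c_{j,i}^0(h)$, ordering the three indices $j\ne i$ and telescoping $\prod_{j\ne i}\one_{\tilde E_j-c_{j,i}(u)}-\prod_{j\ne i}\one_{\tilde E_j-c_{j,i}(u')}$, one gets $K_i(u)-K_i(u')=\sum_{j\ne i}T_j$, where $T_j$ is the signed measure of the shell $(\tilde E_j-c_{j,i}(u))\symdif(\tilde E_j-c_{j,i}(u'))$ intersected with the remaining two translated sets, together with the analogous $K_i^0(u)-K_i^0(u')=\sum_{j\ne i}T_j^0$; it then suffices to show $|T_j-T_j^0|\le o_\delta(1)|h|+O(|h|^2)$ for each $j$. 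The key identity is a first--variation formula: for a set $B$ as in (a) whose boundary meets each of finitely many transversal reference curves transversally, and a region $Y$ cut out by those curves, $T_j$ (with $B=\tilde E_j$ translated, $b=b_j$, $Y$ the intersection of the other two translated sets) equals $-b_j\cdot V_j+\mathrm{err}_j$ with $V_j=\int_{\gamma_j}\nu\,d\sigma$ over the arc $\gamma_j=\partial B\cap\operatorname{int}Y$, and likewise $T_j^0=-b_j^0\cdot V_j^0+\mathrm{err}_j^0$ with $\mathrm{err}_j^0=O(|h|^2)$ because $\partial\tilde E_j^0$ is a circular arc. Crucially $V_j$ equals a fixed $90^\circ$ rotation of the vector joining the endpoints of $\gamma_j$, so it is determined, up to an $O(|h|)$ discrepancy between the $u$-- and $u'$--translates entering $Y$, by the ``vertices'' of the triple intersection, namely the transversal crossings of $\partial\tilde E_j$ with the other $\partial\tilde E_k$'s. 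Here the genericity hypothesis of Definition~\ref{genericity} enters: it guarantees that at $(\scriptl^0,\bE^0)$ the triple intersection near $\partial E_i^0$ is of one of the two stable types (i), (ii), all of whose boundary arcs meet transversely, so this combinatorial structure and the transversality persist for $(\scriptl,\bE)$ within $o_\delta(1)$ of the reference, with each vertex moving by $o_\delta(1)$. Hence $V_j=V_j^0+o_\delta(1)+O(|h|)$, and with (c), $T_j-T_j^0=-(b_j-b_j^0)\cdot V_j-b_j^0\cdot(V_j-V_j^0)+(\mathrm{err}_j-\mathrm{err}_j^0)=o_\delta(1)|h|+(\mathrm{err}_j-\mathrm{err}_j^0)$.

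The hard part, and the only place the weak boundary regularity available at this stage of the bootstrap bites, is the bound $\mathrm{err}_j=o_\delta(1)|h|+O(|h|^2)$ together with the vertex--stability claim, since $\partial\tilde E_j$ is known only to be Hausdorff-$o_\delta(1)$-close to a circle, not $C^1$-close. For the first--variation error I would decompose the shell $(\tilde E_j-b_j)\symdif\tilde E_j$ along the monotone graphs of (a) and estimate separately the ``interior'' contribution, which produces $-b_j\cdot V_j$ with a curvature-type defect $O(|h|^2)$ where $\partial\tilde E_j$ tracks a circular arc and an $o_\delta(1)|h|$ defect from its Hausdorff deviation there, and the boundedly many ``corner'' contributions near $\partial\tilde E_j\cap\partial Y$, each a region of size $O(|h|)\times O(|h|)$ by transversality. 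For vertex stability I would use that a transversal crossing of the circles $\partial\tilde E_j^0$, $\partial\tilde E_k^0$ has a neighborhood on which membership in $\tilde E_k^0$ changes at a linear rate; combined with the Hausdorff bounds this pins the perturbed crossing to within $o_\delta(1)$ of the reference one and rules out new nearby crossings. Summing the four resulting estimates over $j$ yields the stated inequality.
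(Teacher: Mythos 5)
Your argument is correct and is essentially the paper's proof in different clothing: your ``first--variation'' identity with $V_j$ equal to a rotation of the endpoint difference of $\gamma_j$ is exactly the paper's Stokes computation with the exact form $v_1\,dv_2$ (resp.\ $dv_2$), the translation structure $\partial\tilde E_k(u)=\partial\tilde E_k(\bar u)+\ell_{k,i}^\sharp(u-\bar u)$ plays the identical role in extracting the linear term determined by endpoints alone, and genericity enters in both treatments only to guarantee transversal vertices whose perturbed positions are pinned to within $o_\delta(1)$ of the reference ones. The one organizational difference is that the paper quarantines the vertex neighborhoods inside small excised disks $Q_\alpha$ whose contributions to $K_i$ are Lipschitz of norm $o_\delta(1)$, so it never needs to resolve how the merely rectifiable curves $\partial\tilde E_j(u)$, $\partial\tilde E_k(u)$ actually cross there --- in particular the uniqueness of the perturbed crossing, which you assert but which Hausdorff closeness alone does not supply, is never used --- whereas you estimate the corner regions directly; your version still closes because the monotonicity \eqref{arcparameters} bounds the arclength of each $\partial\tilde E_j$ inside an $o_\delta(1)$--ball by $o_\delta(1)$, so possibly multiple crossings there contribute only $o_\delta(1)\cdot|u-u'|$.
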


\begin{proof}
Fix any index $i\in\scripti$.
For $u\in\reals^2$
define \[\Omega_i(u)
=\{v\in\reals^2: \ell_{j,i}(u,v)\in E_j\ \forall j\ne i\}
= \bigcap_{j\ne i} \tilde E_j(u).  \]
Thus $K_i(u) = |\Omega_i(u)|$.
Likewise define $\Omega_i^0(u)$ in terms of $\scriptl^0$ and the sets $E_j^0,\tilde E_j^0$.
For each $u$ in a neighborhood of $\partial E_i^0$,
$\Omega_i^0(u)$ is a bounded connected set,
whose boundary is a union of two or four arcs of circles. 
The genericity hypothesis guarantees that
these arcs meet transversely at any points of intersection,
and that only two arcs meet at any such point.
These are subarcs of translates of the boundaries of the balls
$\tilde E_j^0$, respectively.
The intersection of $\Omega_i(u)$ with any horizontal or vertical
line is empty, or is an interval.

Each set $\tilde E_j$ is invariant with respect to reflection about both
the horizontal and vertical axes. In the first quadrant, the boundary
of $\tilde E_j$ is a rectifiable curve that can be parametrized by arclength
as $s\mapsto (x(s),y(s))$ with 
\begin{equation} \label{arcparameters}
\dot x(s)\ge 0\ \text{ and } \  \dot y(s)\le 0.
\end{equation}
This curve is contained in an $o_\delta(1)$--neighborhood of the boundary of $\tilde E_j^0$.
The sets $\tilde E_j(u),\tilde E_j^0(\bar u)$
are translates of $\tilde E_j,\tilde E_j^0$, respectively.

Let $\delta>0$ be small, and let $\scriptl$ satisfy
$\norm{\scriptl-\scriptl^0}\le\delta$.
Consider any $i\in\set4$, any $\bar u$ in an $o_\delta(1)$--neighborhood
of $\partial \tilde E_j^0$, and any $u$ near $\bar u$. 
Choose a collection of two or four disks $\{Q_\alpha\}$ in $\reals^2$ whose radii 
tend to zero slowly as $\delta\to 0$,
centered at the two or four intersection points of the arcs comprising 
the boundary of $\Omega_i^0(\bar u)$. 
The boundary if $\Omega_i(u)$ then consists of 
two or four arcs of circles in the boundaries of the disks $Q_\alpha$,
together with two or four rectifiable curves, each of which
has Hausdorff distance $o_\delta(1)$ to a subarc of one of the circular
arcs comprising the boundary of $\Omega_i^0(\bar u)$,
is a translate of a subarc of the boundary of $\tilde E_j$
for some $j\in\scripti$, 
and has monotonicity properties thereby inherited from \eqref{arcparameters}.

For $u$ sufficiently close to $\bar u$,
the boundaries $\partial\tilde E_k(u)$ enjoy the following two properties.
For each $k\in\set4$ and each $\alpha$,
if $\partial\tilde E_k(\bar u)$ meets $\partial Q_\alpha$
then these intersect at a single point $z(k,\alpha,\bar u)$.
There exists a subarc $\Gamma(u)$ of $\partial \tilde E_k(u)$
of arclength $O(|u-\bar u|)$
such that the portion of $\partial\tilde E_k(u)$ not lying
in $\Gamma(u)$ lies at
distance $\ge C_0 |u-\bar u|$ from the boundary of $Q_\alpha$.
Moreover, for all $u$ sufficiently close to $\bar u$,
$\partial\tilde E_k(u)$ meets $\partial Q_\alpha$
at a single point, and this point lies within distance $O(|u-\bar u|)$
of $z(k,\alpha,\bar u)$.
These properties are consequences of the monotonicity properties \eqref{arcparameters}
of the boundaries of $\tilde E_j$ and the assumption that $u$ takes the form $(u_1,0)$.

For $u\in\reals^2$ near $\bar u$, for sufficiently small $\delta$,
\begin{align*} K_i(u) 
= \int_{\Omega_i(u)} dv_1 \wedge dv_2
= \int_{\Omega_i(u)\setminus\cup_\alpha Q_\alpha} d(v_1  \,dv_2)
+ \sum_\alpha \int_{Q_\alpha\cap \Omega_i(u)}  \,dv_1\wedge dv_2.
\end{align*}
The same reasoning as in the proof of Lemma~\ref{lemma:lipschitz}
shows that each term 
$\int_{Q_\alpha\cap \Omega_i(u)}  \,dv_1\wedge dv_2$
defines a locally Lipschitz function of $u$, whose Lipschitz norm is $o_\delta(1)$ 
because the Lebesgue measure of $Q_\alpha$ is $o_\delta(1)$.
$K_i^0(u)$ can be analyzed in the same way, producing a corresponding
term that is also Lipschitz with norm $o_\delta(1)$.

The main term for $K_i(u)$, 
$\int_{\Omega_i(u)\setminus \cup_\alpha Q_\alpha} d(v_1  \,dv_2)$,
can be rewritten via Stokes' theorem. What results is a sum 
of integrals of the one-form $\omega = v_1\,dv_2$
over finitely many rectifiable arcs
$\gamma_\beta(u)$.
Each of these arcs is either a subarc of the boundary of a single $\tilde E_k(u)$,
or is a subarc of the boundary of some $Q_\alpha$.
Label these arcs so that for each index $\beta$,
$\gamma_\beta(u)$ is close in the Hausdorff metric to each of $\gamma_\beta(\bar u)$,
$\gamma_\beta^0(u)$, and $\gamma_\beta^0(\bar u)$,
provided that $\delta$ and $|u-\bar u|$ are sufficiently small.

Consider the contribution of an arbitrary $\gamma(u)=\gamma_\beta(u)$ of the former type.
Its contribution is $\int_{\gamma(u)} v_1\,dv_2$.
We wish to compare this quantity to $\int_{\gamma(\bar u)}v_1\,dv_2$.
$\gamma(u)$ is a subarc of the full boundary $\partial\tilde E_k(u)$ for some index $k\in\set4\setminus\{i\}$.
This full boundary may be expressed as a translate
\[\partial\tilde E_k(u) = \partial \tilde E_k(\bar u) + \ell_{k,i}^\sharp(u-\bar u)\]
for a certain linear mapping $\ell_{k,i}^\sharp: \reals^2\to\reals^2$,
which differs by $o_\delta(1)$ from the corresponding mapping 
$\ell_{k,i}^\sharp$ associated to $\scriptl^0$.

Denote the first component of the $\reals^2$--valued linear map $\ell_{k,i}^\sharp$ by $\tilde\ell_{k,i}$.
The contribution of $\gamma(u)$ is
\begin{align*}
\int_{\gamma(u)} v_1\,dv_2
&= \int_{\gamma(u)-\ell_{k,i}^\sharp(u-\bar u)} (v_1 + \tilde\ell_{k,i}(u-\bar u)) \,dv_2
\\&
= \int_{\gamma(\bar u)} (v_1 + \tilde\ell_{k,i}(u-\bar u)) \,dv_2
+\scriptr(u,\bar u)
\end{align*}
where the remainder $\scriptr(u,\bar u)$ is expressed as an integral over
the symmetric difference between $\gamma(\bar u)$ and $\gamma(u,\bar u)
= \gamma(u)-\ell_{k,i}^\sharp(u-\bar u)$
of an integrand of the form $v_1\,dv_2 + O(|u-\bar u|)$.

Both $\gamma(\bar u)$ and $\gamma(\bar u,u)$ are subarcs of $\partial \tilde E_k(\bar u)$,
so their symmetric difference is a union of two or fewer rectifiable arcs. 
We claim that
each of these  two or fewer arcs has diameter $O(|u-\bar u|)$.
Indeed, if $z\in\partial \tilde E_k(\bar u)$
lies at distance $\ge C_0|u-\bar u|$ from the boundary
of $Q_\alpha$ then $z+\ell_{k,i}^\sharp(u-\bar u)\in\partial\tilde E_k(u)$ 
shares this property with $C_0$ replaced by $C_0/2$, and conversely,
provided that the constant $C_0$ is chosen to be sufficiently large.
Thus the portion of $\gamma(\bar u)\symdif \gamma(u,\bar u)$
that lies within distance $o_\delta(1)$ of the boundary of $Q_\alpha$
lies entirely within distance $O(|u-\bar u|)$ of $\partial Q_\alpha$.
By the key property of $\partial\tilde E_k(\bar u)$ noted above,
this establishes the claim.

The corresponding quantity associated to $K_i(\bar u)$ is simply $\int_{\gamma(\bar u)} v_1\,dv_2$.
Subtracting this from $\int_{\gamma(u)} v_1\,dv_2$ yields
\[ -\tilde\ell_{k,i}(u-\bar u)\, \int_{\gamma(\bar u)} \,dv_2 + \scriptr(u,\bar u).  \]
The factor $\int_{\gamma(\bar u)} \,dv_2$ is independent of $u$.
Moreover, it is the integral of an exact one-form over the curve $\gamma(\bar u)$,
and consequently depends only on the two endpoints of this curve.

Analyzing $K_i^0(u)-K_i^0(\bar u)$ in the same way results in a corresponding term,
which depends in the same way on the two endpoints of the corresponding
elliptical arc $\gamma(\bar u)$. 
Because the Hausdorff distance between $\partial E_k$  and $\partial E_k^0$
is $o_\delta(1)$, the difference between these two contributions
is therefore $O(|u-\bar u|)\cdot o_\delta(1)$. 

To complete discussion of the contribution of $\gamma$,
it remains to analyze $\scriptr(u,\bar u) - \scriptr^0(u,\bar u)$.
Consider the two rectifiable curves that comprise
the symmetric difference between $\gamma(\bar u)$ and $\gamma(\bar u,u)$.
On each, the function $v_1$ may be expressed as a constant plus $O(|u-\bar u|)$.
Terms that are $O(|u-\bar u|)$ produce contributions that are $O(|u-\bar u|^2)$
since the integrals here are taken over curves whose lengths are $O(|u-\bar u|)$.
As above, the constant terms give rise to integrands which are constant multiples of $dv_2$.
Subtracting the corresponding terms for $K_i^0$ and exploiting exactness of $dv_2$,
we conclude that 
\[ |\scriptr(u,\bar u)-\scriptr^0(u,\bar u)| \le O(|u-\bar u|)o_\delta(1) + O(|u-\bar u|^2).\]

The analysis of subarcs of the boundaries of the disks $Q_\alpha$
is very slightly simpler, since these arcs are not translated.
Their contributions are entirely of the type of the remainders $\scriptr(u,\bar u)$.
The same analysis as carried out for $\scriptr(u,\bar u)$ above applies to them.
\end{proof}

\begin{corollary} \label{cor:Lip}
For each $i\in\set4$, $E_i$ is a Lipschitz domain, 
uniformly for all $\scriptl$ sufficiently close to $\scriptl^0$.
\end{corollary}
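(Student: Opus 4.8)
The plan is to read off the Lipschitz regularity of $\partial E_i$ as a quantitative consequence of the three preceding lemmas: the uniform lower bound on $|\nabla K_i^0|$ in a neighborhood of $\partial E_i^0$, the global Lipschitz bound on $K_i$ from Lemma~\ref{lemma:lipschitz}, and the refined estimate on $K_i-K_i^0$ from Lemma~\ref{lemma:littleLip}. Together these will show that $K_i$ is strictly monotone in a fixed direction throughout a fixed-size ball around each point of $\partial E_i^0$, after which the Lipschitz implicit function theorem exhibits $\partial E_i$ locally as the graph of a Lipschitz function, with constants depending only on $(\scriptl^0,\be)$.

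First I would fix $i\in\set4$ and a point $p_0\in\partial E_i^0$. Since $E_i^0$ is a ball centered at the origin, $K_i^0$ is radially symmetric and $E_i^0$ is a superlevel set $\{K_i^0\ge t_i^0\}$ with $t_i^0>0$; let $\xi=-\nabla K_i^0(p_0)/|\nabla K_i^0(p_0)|$ be the outward conormal to $\partial E_i^0$ at $p_0$ and $\eta$ a unit vector orthogonal to it. Using the lemma on the regularity of $K_i^0$ and compactness of $\partial E_i^0$, I would fix $c_0>0$ and a radius $r>0$, both depending only on $(\scriptl^0,\be)$, so that $B:=B(p_0,r)$ lies in the neighborhood where Lemma~\ref{lemma:littleLip} applies and $\partial_\xi K_i^0\le -c_0$ on $B$. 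On $B$, Lemma~\ref{lemma:littleLip} gives $|g(u)-g(u')|\le (o_\delta(1)+Cr)|u-u'|$ for $g:=K_i-K_i^0$, the term $Cr$ absorbing the quadratic remainder and $C$ depending only on $(\scriptl^0,\be)$. Choosing $r$, still depending only on $(\scriptl^0,\be)$, small enough that $Cr<c_0/4$, and then $\delta$ small enough that $o_\delta(1)<c_0/4$, the function $g$ is Lipschitz on $B$ with constant $<c_0/2$, so whenever $p\in B$ and $h>0$ with $[p,p+h\xi]\subset B$,
\[ K_i(p+h\xi)-K_i(p)\le -c_0h+\tfrac{c_0}{2}h=-\tfrac{c_0}{2}h; \]
that is, $K_i$ decreases along $\xi$ on $B$ at rate at least $c_0/2$.

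Next I would invoke the maximizer structure. Since $\bE$ is a maximizer, $E_i$ agrees up to a null set with a superlevel set $\{K_i\ge t_i\}$ for an appropriate $t_i>0$, by the characterization of maximizers recalled at the beginning of Section~\ref{perturb}; and by the earlier lemma the Hausdorff distance from $\partial E_i$ to $\partial E_i^0$ is $o_\delta(1)$, so once $\delta$ is small every point of $\partial E_i$ lies within $r/2$ of $\partial E_i^0$. On each ball $B=B(p_0,r)$ as above, the $\xi$-monotonicity identifies $E_i\cap B$, after possibly shrinking $B$, with a subgraph $\{(\xi,\eta)\in B:\xi\le\xi^\ast(\eta)\}$, where $\xi^\ast(\eta)$ is the unique solution of $K_i(\xi^\ast(\eta),\eta)=t_i$; in particular $\{K_i=t_i\}\cap B$ is this graph, hence Lebesgue null, so the description of $E_i$ near $\partial E_i^0$ is unambiguous. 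To bound $\xi^\ast$ I would use $K_i(\xi^\ast(\eta_1),\eta_1)=t_i=K_i(\xi^\ast(\eta_2),\eta_2)$ together with the rate-$c_0/2$ monotonicity to get $\tfrac{c_0}{2}|\xi^\ast(\eta_1)-\xi^\ast(\eta_2)|\le |K_i(\xi^\ast(\eta_1),\eta_2)-K_i(\xi^\ast(\eta_2),\eta_2)|$, rewrite the right-hand side as $|K_i(\xi^\ast(\eta_1),\eta_2)-K_i(\xi^\ast(\eta_1),\eta_1)|$, and bound it by $L|\eta_1-\eta_2|$ with $L$ the Lipschitz constant of $K_i$ from Lemma~\ref{lemma:lipschitz}; hence $\xi^\ast$ is Lipschitz with constant $\le 2L/c_0$. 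Covering $\partial E_i^0$ by finitely many balls $B(p_k,r/2)$ and using that every point of $\partial E_i$ lies in some $B(p_k,r)$, one concludes that $E_i$ is a Lipschitz domain with Lipschitz character controlled uniformly in $\scriptl$.

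The one delicate point, and the step I would carry out most carefully, is the order of quantifiers: the neighborhood of $\partial E_i^0$, the radius $r$, and the constant $c_0$ are pinned down by $(\scriptl^0,\be)$ alone, and only afterwards is $\delta$ chosen small, depending on $(\scriptl^0,\be)$, so that simultaneously $o_\delta(1)<c_0/4$ in the Lipschitz bound for $K_i-K_i^0$ and the Hausdorff distance from $\partial E_i$ to $\partial E_i^0$ is less than $r/2$. This is exactly what keeps the Lipschitz character uniform over all $\scriptl$ with $\norm{\scriptl-\scriptl^0}\le\delta$; everything else is routine.
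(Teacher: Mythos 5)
Your proposal is correct and follows the same route as the paper's own (much terser) proof: the paper simply observes that $K_i^0$ is smooth with nowhere vanishing gradient near $\partial E_i^0$ and that $\norm{K_i-K_i^0}_{\mathrm{Lip}}=o_\delta(1)$, and concludes that the superlevel set $E_i=\{u:K_i(u)\ge t_i\}$ is a Lipschitz domain. You have merely supplied the implicit-function-theorem details (directional monotonicity at rate $c_0/2$, the graph representation, and the careful ordering of the choices of $r$, $c_0$, and $\delta$) that the paper leaves to the reader.
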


\begin{proof}
In a neighborhood of the boundary of $E_i^0$,
$K_i^0$ is a $C^\infty$ function with nowhere vanishing gradient.
Since $\norm{K_i-K_i^0}_{\rm Lip}\le o_\delta(1)$,
if $\delta$ is sufficiently small
then $E_i=\{u: K_i(u)\ge t_i=t_i(\scriptl,\be)\}$
is a Lipschitz domain whose boundary lies in an $o_\delta(1)$--neighborhood
of the boundary of $E_i^0$.
\end{proof}

\begin{proof}[Conclusion of proof of Theorem~\ref{thm:bootstrap}]
Continuing the discussion in the proof of Corollary~\ref{cor:Lip},
for any $u\in\partial E_i$,
for any $u'$ sufficiently near $u$, $u-u'$
lies in a cone of aperture $o_\delta(1)$
centered around a vector that is tangent to $\partial E_i^0$
at a point whose distance to $u$ is $o_\delta(1)$.
Therefore $\partial\Omega_i(u)$ is a Lipschitz domain
whose boundary consists of finitely many Lipschitz arcs $\gamma_\beta(u)$, with each
$\gamma_\beta(u)$ being a subarc of the boundary of a translate of $\tilde E_k$
for some $i\ne k\in\set4$. Denote the endpoints of $\gamma_\beta(u)$ 
by $\bx_{\beta}(u),\bx'_\beta(u)$.
At any intersection of these arcs,
only two arcs meet, and any such intersection is transverse
in the sense that tangent cones are separated by a positive angle.

Inserting this information into
the proof of Lemma~\ref{lemma:littleLip}, the disks $Q_\alpha$ can now be dispensed with,
yielding the representation
\begin{equation} K_i(u) = \sum_\beta \int_{\gamma_\beta(u)} v_1\,dv_2\end{equation}
for all $u$ in some neighborhood of $\partial E_i^0$.

From this and the fact that $\gamma_\beta(u)$
is a subarc of a translate of the Lipschitz boundary of $\tilde E_k(\bar u)$
by a linear function of $u-\bar u$
we deduce that $K_i\in C^1$ in a neighborhood of $\partial E_i$ with 
$\nabla K_i$ expressible as
\begin{equation} \label{C1formula}
\nabla K_i(u) 
= \sum_\beta\int_{\gamma_\beta(u)} w_{k(\beta)}(u-\bar u) \,dv_1
= \sum_\beta w_{k(\beta)}(u-\bar u)\, (y_\beta(u)-y'_\beta(u))
\end{equation}
where $y_\beta,y'_\beta\in\reals$ are the second coordinates of 
$\bx_\beta,\bx'_\beta\in\reals^2$, respectively,
and $w_k(u-\bar u)$ are $\reals^2$--valued linear functions of $u-\bar u$ determined by $\scriptl$
and the indices $i,k$, mediated by the function $(v_1,v_2)\mapsto v_1$.
Therefore $|w_{k(\beta)}(u-\bar u)|=O(|u-\bar u|)$.
The representation \eqref{C1formula} holds for $C^1$ boundaries, and follows for Lipschitz 
boundaries from the $C^1$ case by a limiting argument.

Therefore the superlevel set $E_i$ of $K_i$ is also a $C^1$ domain. 
It follows that the endpoints $\bx_\beta(u),\bx'_\beta(u)$ are themselves $C^1$
functions of $u$, since they are translates by affine functions of $u$ 
of transversely intersecting $C^1$ arcs. 
Moreover, $\bx_\beta(u)-\bx_\beta^0(u)$ and its gradient with respect to $u$
are uniformly $o_\delta(1)$.
Likewise for $\bx'_\beta(u)$.

Inserting this information into \eqref{C1formula}, we conclude that $K_i\in C^2$.
Since 
$\bx_\beta(u)-\bx_\beta^0(u)$ and its gradient with respect to $u$
are uniformly $o_\delta(1)$
and likewise for $\bx'_\beta(u)$,
it follows moreover that $\norm{K_i-K_i^0}_{C^2}=0_\delta(1)$
in a neighborhood of $\partial E_i^0$.
Therefore $E_i$ is strongly convex.

This reasoning can be iterated to conclude that $K_i\in C^\infty$.
\end{proof}


\section{Nonexistence of maximizers} \label{section:nonexistence}

In this section we discuss a family of data that do not satisfy
the partial symmetry hypothesis \eqref{PSH}. 
In exceptional cases these data are reducible via simple skew-shift changes of variables
to data that satisfy the full symmetry hypothesis.
We show that for all other data of this special type, maximizers $\bE$ fail to exist.
Thus our partial symmetry hypothesis \eqref{PSH} is less artificial than it may appear to be.

Let $L_j^0(\bx,\by) = (L_{j,1}^0(\bx),L_{j,2}^0(\by))$
with $L_{j,1}^0=L_{j,2}^0$,
so that $\scriptl^0=(L_j^0: j\in\scripti)$
satisfies the full symmetry hypothesis.
Suppose that $\scriptl^0$ is nondegenerate.

Let 
$\ell_j:\reals^2\to\reals^1$ be linear, and consider
$\scriptl=(L_i: i\in\scripti)$ with
\begin{equation} L_j(\bx,\by) = (L_{j,1}(\bx),L_{j,2}(\bx,\by))\end{equation}
of the form
\begin{equation} L_{j,1}=L_{j,1}^0 \text{ and } 
L_{j,2}(\bx,\by)=L_{j,2}^0(\by) + \ell_j(\bx).\end{equation}
Define
$\ell:\reals^2\to\reals^4$ by $\ell(\bx) = (\ell_i(\bx): i\in\scripti)$ and $L_2^0:\R^2\to\R^4$ by $(L_{j,2}^0(\by):y\in\set4)$.

\begin{proposition}[Nonexistence of maximizers] \label{prop:nonexistence}
Let $\scriptl^0$ be nondegenerate.
Let $(\scriptl^0,\be)$ be strictly admissible.
If the range of $\ell$ is not contained in the range of $L^0_2$,
then there exists no $4$--tuple $\bE$ satisfying
$|\bE|=\be$ and
$\Lambda_\scriptl(\bE)=\Theta_\scriptl(\be)$.
\end{proposition}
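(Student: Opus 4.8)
\emph{Overview and Step 1 ($\Theta_\scriptl(\be)=\Theta_{\scriptl^0}(\be)$).} The plan is to sandwich $\Lambda_\scriptl$ between $\Lambda_{\scriptl^0}$ and a symmetrization of itself, conclude that $\Lambda_\scriptl$ and $\Lambda_{\scriptl^0}$ have the same supremum over tuples of prescribed measures, and then extract from any hypothetical maximizer of $\Lambda_\scriptl$ enough rigidity to force $\ell$ into the range of $L_2^0$. For the inequality $\Theta_\scriptl(\be)\le\Theta_{\scriptl^0}(\be)$ I would perform the $\by$-integration in $\Lambda_\scriptl(\bE)$ first: for each fixed $\bx$ this is a one-dimensional Riesz--Sobolev (Rogers) functional of the four vertical slices $(E_j)^{L^1_j(\bx)}$, each translated by $\ell_j(\bx)$. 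Applying the one-dimensional symmetrization inequality slice by slice replaces each translated slice by the centered interval of the same length --- the translations by $\ell_j(\bx)$ are annihilated by rearrangement --- and reassembling, the resulting majorant is exactly $\Lambda_{\scriptl^0}(\bE^\sharp)\le\Theta_{\scriptl^0}(\be)$, where $\bE^\sharp$ is the vertical Steiner symmetrization. For the reverse inequality I would use the dilation identity $\Lambda_\scriptl(D_t\bE)=\Lambda_{\scriptl_{t^2}}(\bE)$, where $D_t$ is the measure-preserving dilation and $\scriptl_s$ is the structure obtained from $\scriptl^0$ by the shift $\ell\mapsto s\ell$; applying it with $\bE$ a tuple of origin-centered balls realizing $\Theta_{\scriptl^0}(\be)$ and letting $t\to0$, dominated convergence gives $\Lambda_{\scriptl_{t^2}}(\bE)\to\Lambda_{\scriptl^0}(\bE)=\Theta_{\scriptl^0}(\be)$, while $|D_t\bE|=\be$.

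\emph{Step 2: rigidity of a maximizer.} Suppose $\bE$ maximizes $\Lambda_\scriptl$ with $|\bE|=\be$. By Step 1 the chain $\Lambda_\scriptl(\bE)\le\Lambda_{\scriptl^0}(\bE^\sharp)\le\Theta_{\scriptl^0}(\be)$ consists of equalities, so $\bE^\sharp$ maximizes $\Lambda_{\scriptl^0}$. Since $(\scriptl^0,\be)$ is strictly admissible and $\scriptl^0$ nondegenerate, the classification recalled above (from \cite{christoneill}) shows $\bE^\sharp$ is a tuple of homothetic ellipses with centers in the translation orbit of the origin; being also vertically Steiner symmetric, each $E^\sharp_j$ is an axis-parallel ellipse centered on the horizontal axis, and after applying to $\bE$ a suitable horizontal translation and a measure-preserving dilation --- both symmetries of $\Lambda_\scriptl$, the latter commuting with vertical Steiner symmetrization --- I may assume $\bE^\sharp$ is the tuple of origin-centered balls of the prescribed measures. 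Equality in the first inequality of the chain forces the slice-wise one-dimensional symmetrization to be sharp for a.e.\ $\bx$ in the open set where all four slices are nondegenerate; invoking the equality characterization of the one-dimensional Riesz--Sobolev inequality, for a.e.\ such $\bx$ each slice $(E_j)^{L^1_j(\bx)}$ is, modulo null sets, an interval of the length prescribed by the corresponding ball, and these four intervals are simultaneously centerable: writing $\gamma_j(L^1_j(\bx))$ for the center of $(E_j)^{L^1_j(\bx)}$, one obtains $\big(\gamma_j(L^1_j(\bx))-\ell_j(\bx)\big)_{j\in\scripti}=L^0_2(\by^\ast(\bx))$ for some $\by^\ast(\bx)\in\reals^2$. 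Thus $E_j$ coincides, near its axis of vertical symmetry, with a ball sheared vertically by the one-variable function $\gamma_j$.

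\emph{Step 3: conclusion.} The displayed relation says $(\gamma_j(L^1_j(\bx)))_j\equiv\ell(\bx)$ modulo the range of $L_2^0$ on an open set. Eliminating $\by^\ast$ among the four scalar equations --- using the nondegeneracy of $\scriptl^0$, which makes the maps $\bx\mapsto(L^1_i(\bx),L^1_j(\bx))$ invertible --- produces a Pexider-type functional equation of the shape $\gamma_k(a x_1+b x_2)=a\gamma_i(x_1)+b\gamma_j(x_2)+(\text{affine in }\bx)$ with $ab\ne0$, whose measurable solutions are affine; hence each $\gamma_j$ is affine near the origin. Substituting affine $\gamma_j$ back, the left side of the centering relation becomes affine in $\bx$, and since it must agree with the linear map $\ell$ modulo the range of $L_2^0$ on a set of positive measure, it agrees everywhere; reading off the constant and linear parts then forces $\ell(\reals^2)$ into the range of $L_2^0$, i.e.\ makes $\scriptl$ reducible by a skew-shift to a fully symmetric structure, contradicting the hypothesis. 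Therefore no maximizer exists.

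\emph{Main obstacle.} The delicate point is Step 2: promoting the scalar identity $\Lambda_\scriptl(\bE)=\Lambda_{\scriptl^0}(\bE^\sharp)$ to pointwise-in-$\bx$ equality of the inner functionals, and then applying a genuinely \emph{sharp} equality characterization of the one-dimensional Riesz--Sobolev inequality for the slice problems --- which requires verifying that strict admissibility of $(\scriptl^0,\be)$ descends to the relevant one-dimensional configurations of interval lengths for a set of $\bx$ of positive measure. Once this is established, Step 1 and the Pexider reduction in Step 3 are comparatively routine.
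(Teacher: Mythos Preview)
Your Step 1 and the overall architecture match the paper's proof exactly. The point of departure is in Steps 2--3, where you introduce the unknown slice-centers $\gamma_j$ and then work to eliminate them via a Pexider argument. The paper avoids this entirely with one observation you did not exploit: since $\bE^\ddagger$ (the vertical Steiner symmetrization) is itself a maximizer for $\Lambda_\scriptl$, one may simply \emph{replace} $\bE$ by $\bE^\ddagger$ from the outset and assume $\bE=\bE^\ddagger$. Then every slice $(E_j)_{L^1_j(\bx)}$ is already a centered interval, so $\gamma_j\equiv 0$, and the centering relation you derived collapses immediately to $\ell(\bx)\in\mathrm{range}(L_2^0)$ for all small $\bx$ --- no functional equation needed. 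Your Pexider reduction appears to be correct (nondegeneracy does give the nonvanishing coefficients you need), but it is an avoidable detour.

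Two smaller points. First, your phrase ``horizontal translation'' in Step 2 is imprecise: the translation symmetry of $\Lambda_\scriptl$ that moves the horizontal centers of $\bE^\sharp$ to the origin is $\bv\mapsto(\bv_{12},0)\in\reals^4$, which shifts $E_j$ by $(L^0_{j,1}(\bv_{12}),\ell_j(\bv_{12}))$ --- it has a vertical component. This does not break your argument (vertical shifts are absorbed by $\sharp$), but the wording invites confusion. Second, the paper is explicit that the one-dimensional equality characterization it invokes (from \cite{christBLLeq}) requires not only strict admissibility but also a \emph{genericity} hypothesis on $(L^0_{j,2}:j\in\scripti)$; you flagged the admissibility descent as the main obstacle but did not mention genericity. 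The paper asserts these hypotheses are met, and in fact the slice-wise strict admissibility for small $\bx$ is quoted as a known property of $\Lambda_{\scriptl^0}$--maximizers from \cite{christoneill}, confirming your identification of the crux.
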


Thus maximizers can fail to exist for arbitrarily small
perturbations $\scriptl$ of $\sl(d)$--invariant data $\scriptl^0$.

We believe that this remains true if the full symmetry hypothesis is relaxed to \eqref{PSH},
but the proof below utilizes a property of maximizers that has as yet been
established under only the full symmetry hypothesis, or (as a corollary, using Theorem~\ref{thm:bootstrap})
for partially symmetric data that are sufficiently small perturbations of fully symmetric data. 

The proof of Proposition~\ref{prop:nonexistence} uses partial symmetrization.
For $E\subset\reals^2$, define $E^\ddagger$ to be the vertical
symmetrization of $E$. Thus for each $x\in\reals$, define $E_x=\{y\in\reals^1:
(x,y)\in E\}$, define $E_x^\ddagger=[-\tfrac12 |E_x|,\tfrac12|E_x|]$
if $|E_x|>0$ and $E_x^\ddagger=\emptyset$ if $|E_x|=0$,
and define $E^\ddagger = \{(x,y)\in\reals^2: y\in E_x^\ddagger\}$.

As above, for $t\in(0,\infty)$ define
$D_t(E)=\{(tx,t^{-1}y): (x,y)\in E\}$. 
Define $D_t(\bE) = (D_t(E_j): j\in\scripti)$. 

\begin{lemma}
For any $\bE$,
\begin{equation} \label{compare} 
\Lambda_{\scriptl}(\bE) \le \Lambda_{\scriptl^0}(\bE^\ddagger).\end{equation}
\end{lemma}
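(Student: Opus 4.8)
The plan is to establish \eqref{compare} by freezing the first two coordinates and invoking the one-dimensional Brascamp--Lieb--Luttinger rearrangement inequality in the remaining two. Write $\bx=(x_1,x_2)$ and $\by=(y_1,y_2)$, and for $a\in\reals$ let $(E_j)_a=\{v\in\reals:(a,v)\in E_j\}$ be the slice of $E_j\subset\reals^2$ over its first coordinate. Since $L_j(\bx,\by)=(L_{j,1}^0(\bx),\,L_{j,2}^0(\by)+\ell_j(\bx))$, Tonelli's theorem gives
\[ \Lambda_\scriptl(\bE)=\int_{\reals^2}\Big(\int_{\reals^2}\prod_{j\in\scripti}\one_{(E_j)_{L_{j,1}^0(\bx)}}\big(L_{j,2}^0(\by)+\ell_j(\bx)\big)\,d\by\Big)\,d\bx=:\int_{\reals^2}I(\bx)\,d\bx, \]
where the slices $(E_j)_a$ are defined and measurable for almost every $a$, with $a\mapsto|(E_j)_a|$ measurable.

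First I would record the form of the rearrangement inequality needed: for linear $M_j:\reals^2\to\reals$, measurable $F_j\subset\reals$ of finite measure, and arbitrary shifts $c_j\in\reals$,
\[ \int_{\reals^2}\prod_{j\in\scripti}\one_{F_j}\big(M_j(\by)+c_j\big)\,d\by\ \le\ \int_{\reals^2}\prod_{j\in\scripti}\one_{F_j^\star}\big(M_j(\by)\big)\,d\by, \]
where $F^\star\subset\reals$ denotes the symmetric interval with $|F^\star|=|F|$. This is the Brascamp--Lieb--Luttinger inequality \cite{BLL} applied to the functions $g_j(t)=\one_{F_j}(t+c_j)=\one_{F_j-c_j}(t)$, together with the translation invariance of symmetric rearrangement, $(F_j-c_j)^\star=F_j^\star$. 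The conceptual point to stress is that the shifts $c_j$ — in our application $c_j=\ell_j(\bx)$, which in the situation of Proposition~\ref{prop:nonexistence} cannot in general be absorbed into a single translation of $\by$ — are no obstruction to the symmetrization inequality; they are simply discarded. It is precisely this discarding of translation data that later forces maximizers to fail to exist.

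Next I would apply the displayed inequality slicewise, with $F_j=(E_j)_{L_{j,1}^0(\bx)}$, $M_j=L_{j,2}^0$, and $c_j=\ell_j(\bx)$, to get $I(\bx)\le J(\bx)$ for almost every $\bx$, where
\[ J(\bx)=\int_{\reals^2}\prod_{j\in\scripti}\one_{((E_j)_{L_{j,1}^0(\bx)})^\star}\big(L_{j,2}^0(\by)\big)\,d\by. \]
The final step is to identify this integrand with the one defining $\Lambda_{\scriptl^0}(\bE^\ddagger)$. By the definition of the vertical symmetrization, the $a$-slice of $E_j^\ddagger$ is $(E_j^\ddagger)_a=((E_j)_a)^\star$, so $\one_{((E_j)_a)^\star}(v)=\one_{E_j^\ddagger}(a,v)$; taking $a=L_{j,1}^0(\bx)$ and $v=L_{j,2}^0(\by)$ and using $L_j^0(\bx,\by)=(L_{j,1}^0(\bx),L_{j,2}^0(\by))$ yields $J(\bx)=\int_{\reals^2}\prod_{j\in\scripti}\one_{E_j^\ddagger}(L_j^0(\bx,\by))\,d\by$. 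Integrating $I\le J$ over $\bx\in\reals^2$ and applying Tonelli once more on the right gives $\Lambda_\scriptl(\bE)\le\Lambda_{\scriptl^0}(\bE^\ddagger)$.

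The only ingredients beyond bookkeeping are the measurability facts needed for Tonelli and for the slicewise application of BLL (all standard) and the shifted form of the one-dimensional BLL inequality; since the latter reduces immediately to the classical statement as indicated, I do not anticipate a genuine obstacle, and would treat the translation-absorption observation as the one point worth an explicit line.
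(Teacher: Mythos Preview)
Your proof is correct and follows essentially the same approach as the paper: freeze $\bx$, apply the one-dimensional Rogers--Brascamp--Lieb--Luttinger inequality to the inner $\by$-integral (where no symmetry hypothesis is needed since the sets lie in $\reals^1$), and identify the result with $\Lambda_{\scriptl^0}(\bE^\ddagger)$. Your treatment is slightly more explicit about the role of the shifts $\ell_j(\bx)$ and the measurability bookkeeping, but the argument is the same.
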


\begin{proof}
Consider
\begin{align*}
\Lambda_{\scriptl}(\bE) 
&= \int_{\reals^2} \int_{\reals^2} \prod_{j\in\scripti}
\one_{E_j}(L_j(\bx,\by))\,d\by\,d\bx
\\&
= \int_{\reals^2} \int_{\reals^2} \prod_{j\in\scripti}
\one_{E_{j,{L_{j,1}(\bx)}}}
(L_{j,2}^0(\by) + \ell_j(\bx))\,d\by\,d\bx
\end{align*}
where $E_{j,z} = \{y: (z,y)\in E_j\}$.
For each $\bx\in\reals^2$,
apply the symmetrization inequality of Rogers-Brascamp-Lieb-Luttinger
to the inner integral. No symmetry hypothesis comes into play,
since each set $E_{j,z}$ is a subset of $\reals^1$.
Therefore
\begin{align*}
\Lambda_{\scriptl}(\bE) 
\le \int_{\reals^2} \int_{\reals^2} \prod_{j\in\scripti}
\one_{ E_{j,{L_{j,1}(\bx)}}^\star} 
(L_{j,2}^0(\by))\,d\by\,d\bx
\end{align*}
where 
$E_{j,z}^\star \subset\reals^1$ 
is the usual symmetrization of $E_{j,z}$; it is the closed interval centered
at $0\in\reals^1$ whose Lebesgue measure equals that of $E_{j,z}$
if this measure is strictly positive, and is empty otherwise.
The right-hand side of this last inequality is equal to
$\Lambda_{\scriptl^0}(\bE^\ddagger)$.
\end{proof}

This proof of \eqref{compare} yields supplementary information that is essential to our purpose:
If $\bE=\bE^\ddagger$, then
$\Lambda_\scriptl(\bE)=\Lambda_{\scriptl^0}(\bE)$
if and only if for almost every $\bx\in\reals^2$,
\begin{equation}
\int_{\reals^2} \prod_{j\in\scripti}
\one_{E_{j,{L_{j,1}(\bx)}}} (L_{j,2}^0(\by) + \ell_j(\bx))\,d\by
=
\int_{\reals^2} \prod_{j\in\scripti} \one_{E_{j,{L_{j,1}(\bx)}}} (L_{j,2}^0(\by))\,d\by.
\end{equation}

Upon taking the supremum over all $\bE$ satisfying $|\bE|=\be$, 
we conclude from \eqref{compare} that
$\Theta_{\scriptl}(\be) \le \Theta_{\scriptl^0}(\be)$  for any $\be$.
More is true:
\begin{proposition}
Let $\ell:\reals^2\to\reals^4$ be an arbitrary linear map.
Let $\scriptl,\scriptl^0$ be as described above.
For any $\be\in(0,\infty)^4$,
\begin{equation} \label{alsomaxzero}
\Theta_{\scriptl}(\be) = \Theta_{\scriptl^0}(\be). \end{equation}
\end{proposition}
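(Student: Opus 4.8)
The plan is to prove the two inequalities $\Theta_\scriptl(\be)\le\Theta_{\scriptl^0}(\be)$ and $\Theta_\scriptl(\be)\ge\Theta_{\scriptl^0}(\be)$ separately. The first has already been recorded above: taking the supremum in \eqref{compare} over all tuples of the prescribed measures, and using that vertical symmetrization preserves measures, gives $\Theta_\scriptl(\be)\le\Theta_{\scriptl^0}(\be)$ for every $\be$. So the entire content lies in the reverse inequality, and the idea is to convert an exact maximizer of $\Lambda_{\scriptl^0}$ into a family of approximate maximizers of $\Lambda_\scriptl$ by applying the measure-preserving dilations $D_t$ and letting $t\to 0$; the dilation suppresses the offending terms $\ell_j(\bx)$.

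Concretely, I would fix $\bE^0=(E_j^0:j\in\scripti)$ to be the $4$-tuple of balls centered at the origin with $|\bE^0|=\be$. Since $\scriptl^0$ satisfies the full symmetry hypothesis, the Rogers--Brascamp--Lieb--Luttinger inequality \eqref{ineq:symm} shows that $\bE^0$ maximizes $\Lambda_{\scriptl^0}$, so $\Lambda_{\scriptl^0}(\bE^0)=\Theta_{\scriptl^0}(\be)$. For $t>0$ the tuple $D_t\bE^0$ again satisfies $|D_t\bE^0|=\be$. Using $\one_{D_tE}(p,q)=\one_{E}(t^{-1}p,tq)$, then making the volume-preserving substitution $\bx\mapsto t\bx$, $\by\mapsto t^{-1}\by$ on $\reals^4$, and invoking the homogeneity of the linear maps $L_{j,1}=L^0_{j,1}$, $L^0_{j,2}$, $\ell_j$, one computes
\[ \Lambda_\scriptl(D_t\bE^0)=\int_{\reals^4}\prod_{j\in\scripti}\one_{E_j^0}\bigl(L^0_{j,1}(\bx),\,L^0_{j,2}(\by)+t^2\ell_j(\bx)\bigr)\,d\bx\,d\by. \]
The factor $t^2$ multiplying $\ell_j(\bx)$ is the whole point of applying $D_t$: the perturbation is damped as $t\to 0$.

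Now let $t\to 0^+$. For each $j$ the linear map $(\bx,\by)\mapsto\bigl(L^0_{j,1}(\bx),L^0_{j,2}(\by)\bigr)$ from $\reals^4$ to $\reals^2$ is surjective, so the preimage of the null set $\partial E_j^0$ is null; hence for a.e.\ $(\bx,\by)$ the point $\bigl(L^0_{j,1}(\bx),L^0_{j,2}(\by)\bigr)$ lies in the interior of $E_j^0$ or of its complement, and the integrand above converges pointwise a.e.\ to $\prod_{j}\one_{E_j^0}\bigl(L^0_{j,1}(\bx),L^0_{j,2}(\by)\bigr)$. The integrands being nonnegative, Fatou's lemma gives
\[ \liminf_{t\to 0^+}\Lambda_\scriptl(D_t\bE^0)\ \ge\ \int_{\reals^4}\prod_{j\in\scripti}\one_{E_j^0}\bigl(L^0_{j,1}(\bx),L^0_{j,2}(\by)\bigr)\,d\bx\,d\by=\Lambda_{\scriptl^0}(\bE^0)=\Theta_{\scriptl^0}(\be). \]
Since $\Theta_\scriptl(\be)\ge\Lambda_\scriptl(D_t\bE^0)$ for every $t>0$, this yields $\Theta_\scriptl(\be)\ge\Theta_{\scriptl^0}(\be)$; combined with the opposite inequality, the proof is complete. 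There is no genuine obstacle; the only point that requires a line of care is the passage to the limit, where I deliberately avoid exhibiting an integrable majorant uniform in $t$ (which is not obvious) and instead use Fatou's lemma, which suffices because only the lower bound on $\liminf_{t\to 0}\Lambda_\scriptl(D_t\bE^0)$ is needed. The a.e.\ convergence itself rests only on the elementary fact that the preimage of a null set under a surjective linear map is null.
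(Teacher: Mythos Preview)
Your proof is correct and follows essentially the same approach as the paper: both exploit the dilation $D_t$ and the change of variables $(\bx,\by)\mapsto(t\bx,t^{-1}\by)$ to convert $\Lambda_\scriptl(D_t\bE)$ into an integral in which the perturbation enters only through the damped term $t^2\ell_j(\bx)$. The only difference is in how the limit $t\to 0$ is handled: the paper proves actual convergence $\Lambda_\scriptl(D_t\bE)\to\Lambda_{\scriptl^0}(\bE)$ for arbitrary vertically symmetrized $\bE$ by invoking the bound $|I\symdif(I+s)|\le|s|$ for centered intervals together with unspecified ``routine majorizations,'' whereas you work with the specific maximizer $\bE^0$ (balls) and use Fatou's lemma to extract only the needed lower bound on $\liminf$. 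Your route is slightly slicker in that it sidesteps the need for an integrable majorant; the paper's route is marginally more general in that it establishes the limit for all vertically symmetric tuples, not just the extremal one, though this extra generality is not used.
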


\begin{proof}
By \eqref{compare},
it suffices to show that for any $4$--tuple satisfying $\bE=\bE^\ddagger$,
\begin{equation} \Lambda_{\scriptl}(D_t\bE) \to \Lambda_{\scriptl^0}(\bE) \ \text{ as $t\to 0$.} \end{equation}
To evaluate this limit write
\begin{align*}
\Lambda_{\scriptl}(D_t\bE) 
&= \int_{\reals^2} \int_{\reals^2} \prod_{j\in\scripti}
\one_{D_t E_j}\big(L^0_{j,1}(\bx),L^0_{j,2}(\by)+\ell_j(\bx)\big)\,d\by\,d\bx
\\
&= \int_{\reals^2} \int_{\reals^2} \prod_{j\in\scripti}
\one_{E_j}\big(t^{-1} L^0_{j,1}(\bx),t L^0_{j,2}(\by)+ t \ell_j(\bx)\big)\,d\by\,d\bx
\\
&= \int_{\reals^2} \int_{\reals^2} \prod_{j\in\scripti}
\one_{E_j}\big(L^0_{j,1}(\bx), L^0_{j,2}(\by)+ t^2 \ell_j(\bx)\big)\,d\by\,d\bx
\end{align*}
by substituting $\bx = t\,\bu$ and $\by = t^{-1}\,\bv$ and then
replacing $(\bu,\bv)$ by $(\bx,\by)$ to obtain the last line.

For any interval $I\subset\reals$ centered at the origin
$|I\symdif(I+t)|\le |t|$.
The conclusion \eqref{alsomaxzero} 
follows by applying this bound together  with routine majorizations
to the expression for $\Lambda_\scriptl(D_t\bE)$
in the final line of the chain of identities in the preceding paragraph.
\end{proof}

\begin{proof}[Proof of Proposition~\ref{prop:nonexistence}]
Suppose that
$\bE$ were a maximizer for $\Lambda_\scriptl$. 
Then $\bE^\ddagger$ is also a maximizer for $\Lambda_\scriptl$, by \eqref{compare}. 
Moreover, by \eqref{alsomaxzero}, 
$\bE^\ddagger$ is a maximizer for $\Lambda_{\scriptl^0}$. 
So consider any common maximizer for $\Lambda_\scriptl$ and for $\Lambda_{\scriptl^0}$
that satisfies $\bE = \bE^\ddagger$.

Arbitrary maximizers for $\Lambda_{\scriptl^0}$
have been characterized for nondegenerate strictly admissible $(\scriptl^0,\be)$ 
\cite{christoneill}.
They have the property that
there exists $\eta>0$, depending on $\bE$, such that
for all $\bx\in\reals^2$ satisfying $|\bx|\le\eta$,
the $4$--tuple 
$(|E_{i,{L_{i,1}(\bx)}}|: i\in\scripti)$ 
of measures of associated one-dimensional sets is strictly admissible
for the lower-dimensional form 
\begin{equation} \label{lowerDform}
\int_{\reals^2} \prod_{j\in\scripti} \one_{F_j}(L_{j,2}^0(\by))\,d\by,
\end{equation}
where $(F_j: j\in\scripti)$ represents a $4$--tuple of subsets of $\reals^1$. 

Maximizers of \eqref{lowerDform} have been characterized \cite{christBLLeq}
under hypotheses of nondegeneracy, strict admissibility, and genericity.
These hypotheses are satisfied by $(L^0_{j,2}: j\in\scripti)$ and $\be$. 
We conclude that for any $\bx$ for which
$(|E_{i,{L_{i,1}(\bx)}}|: i\in\scripti)$ is a strictly admissible $4$--tuple,
the vector $\ell(\bx)=(\ell_j(\bx): j\in\scripti) \in\reals^4$ 
takes the form $(L^0_{j,2}(\bu): j\in\scripti)$ for some $\bu\in\reals^2$;
that is, $\ell(\bx)$ belongs to the range of $L^0_2$.
Since $\ell,L^0_2$ are linear mappings, 
the range of $\ell$ is contained in the range of $L^0_2$, as claimed.

\medskip
Conversely, if the range of $\ell=(\ell_i: i\in\scripti)$ 
is contained in the range of $L^0_2 = (L_{2,i}^0: i\in\scripti)$,
then 
the theory developed in this paper for data $\scriptl$ 
satisfying the partial symmetry hypothesis \eqref{PSH} can be applied
to $\Lambda_{\scriptl}$ after a simple change of variables. It is not necessary
to assume that $\scriptl^0$ satisfies the full symmetry hypothesis \eqref{RBLLsymmetry}. Indeed
the hypothesis of inclusion of the ranges implies that there exists a
linear mapping $h:\reals^2\to\reals^2$ satisfying
$\ell_i = L^0_{2,i}\circ h$ for every $i\in\scripti$.
Thus
\begin{align*}
\Lambda_{\scriptl}(\bE) 
& = \int_{\reals^2} \int_{\reals^2} \prod_{j\in\scripti}
\one_{E_{j,{L_{j,1}(\bx)}}}
(L_{j,2}^0(\by + h(\bx)))\,d\by\,d\bx.
\end{align*}
The linear change of variables $(\bx,\by)\mapsto (\bx,\by+h(\bx))$
in $\reals^4$ transforms this double integral to
\begin{align*}
\int_{\reals^2} \int_{\reals^2} \prod_{j\in\scripti}
\one_{E_{j,{L_{j,1}(\bx)}}}
(L_{j,2}^0(\by))\,d\by\,d\bx
= \Lambda_{\scriptl^0}(\bE).
\end{align*}
\end{proof}

\end{document}